\newcommand{\C}{\mathbb{C}}
\newcommand{\F}{\mathbb{F}}
\newcommand{\A}{\mathbb{A}}
\newcommand{\B}{\mathbb{B}}
\newcommand{\Z}{\mathbb{Z}}
\newcommand{\Q}{\mathbb{Q}}
\newcommand{\R}{\mathbb{R}}
\newcommand{\N}{\mathbb{N}}
\newcommand{\bV}{\mathsf{V}}
\newcommand{\Fock}{\mathsf{Fock}}
\newcommand{\bL}{\mathsf{L}}
\newcommand{\bv}{\mathsf{v}}
\newcommand{\aroof}{\widehat{\mathsf{a}}}
\newcommand{\bT}{\mathsf{T}}
\newcommand{\bZ}{\mathsf{Z}}
\newcommand{\bP}{\mathbb{P}}
\newcommand{\bG}{\mathsf{G}}
\newcommand{\bE}{\mathsf{E}}
\newcommand{\bEN}{\mathbb{E}}
\newcommand{\bS}{\mathsf{S}}
\newcommand{\cK}{\mathscr{K}}
\newcommand{\cL}{\mathscr{L}}
\newcommand{\cO}{\mathscr{O}}
\newcommand{\cE}{\mathscr{E}}
\newcommand{\cU}{\mathscr{U}}
\newcommand{\cM}{\mathscr{M}}
\newcommand{\Mbar}{\overline{\cM}}
\newcommand{\cT}{\mathscr{T}}
\newcommand{\cZ}{\mathscr{Z}}
\newcommand{\be}{\mathbf{e}}
\newcommand{\vbe}{\vec{\be}}
\newcommand{\cI}{\mathscr{I}}
\newcommand{\lan}{\left\langle} 
\newcommand{\ran}{\right\rangle}
\newcommand{\fg}{\mathfrak{g}}
\newcommand{\rdd}{/\!\!/}
\newcommand{\Ct}{\mathbb{C}^\times}
\newcommand{\Hd}{{H}^{\raisebox{0.5mm}{$\scriptscriptstyle \bullet$}}}
\newcommand{\Hhd}{{H}_{\raisebox{0.5mm}{$\scriptscriptstyle \bullet$}}}
\newcommand{\bSd}{{\mathsf{S}}^{\raisebox{0.5mm}{$\scriptscriptstyle
      \bullet$}}}
\newcommand{\bSdh}{{\widehat{\mathsf{S}}}^{\raisebox{0.5mm}{$\scriptscriptstyle
      \bullet$}}}
\newcommand{\Extd}{{\Ext}^{\raisebox{0.5mm}{$\scriptscriptstyle \bullet$}}}
\newcommand{\glh}{\widehat{\mathfrak{gl}}} 
\newcommand{\slh}{\widehat{\mathfrak{sl}}} 
\newcommand{\glhh}{\widehat{\widehat{\mathfrak{gl}}}} 
\newcommand{\gh}{\widehat{\mathfrak{g}}}
\newcommand{\KM}{\textup{\tiny\textsf{KM}}}
\newcommand{\MO}{\textup{\tiny\textsf{MO}}}
\newcommand{\ADE}{\textup{\tiny\textsf{ADE}}}
\newcommand{\debar}{\bar\partial} 
\newcommand{\bx}{\textup{\ding{114}}}
\DeclareMathOperator{\Mc}{Mc}
\DeclareMathOperator{\Def}{Def}
\DeclareMathOperator{\Chow}{Chow}
\DeclareMathOperator{\Obs}{Obs}
\DeclareMathOperator{\Lie}{Lie}
\DeclareMathOperator{\Hilb}{Hilb}
\DeclareMathOperator{\PT}{PT}
\DeclareMathOperator{\DT}{DT}
\DeclareMathOperator{\MM}{\mathsf{M2}}
\DeclareMathOperator{\End}{End}
\DeclareMathOperator{\Ext}{Ext}
\DeclareMathOperator{\Hom}{Hom}
\DeclareMathOperator{\Aut}{Aut}
\DeclareMathOperator{\rk}{rk}
\DeclareMathOperator{\supp}{supp}
\DeclareMathOperator{\virdim}{vir\, dim}
\newcommand{\vir}{\textup{vir}}
\newcommand{\vtx}{\textup{vtx}}
\newcommand{\pt}{\textup{pt}}
\newcommand{\cF}{\mathscr{F}}
\newcommand{\cG}{\mathscr{G}}
\newcommand{\tO}{\widehat{\mathscr{O}}}
\DeclareMathOperator{\Coker}{Coker}
\DeclareMathOperator{\tr}{tr}
\DeclareMathOperator{\Euler}{Euler}
\newcommand{\CS}{\mathsf{CS}}
\newcommand{\YM}{\mathsf{YM}}
\newtheorem{Proposition}{Proposition}[section]
\newtheorem{Lemma}[Proposition]{Lemma}
\newtheorem{Theorem}{Theorem} 
\newtheorem{Conjecture}{Conjecture}
\theoremstyle{definition}
\begin{document}

\title{Takagi lectures on Donaldson-Thomas theory} 
\author{Andrei Okounkov}
\date{} 
\maketitle


\setcounter{tocdepth}{2}
\tableofcontents


\bigskip
\bigskip
\noindent 
My goal in these notes is to explain the following two sides of 
DT counts of curves in
algebraic threefolds: 
\begin{itemize}
\item[---] the counts are defined in very general
  situations, and this generality gives the subject its flexibility
  and technical power, while also 
\item[---] the counts are something concrete and natural, once the
  general definitions are specialized to important special cases. 
\end{itemize}
I believe that the combination of these features make the subject particularly rich, and
certainly both of them are important for the multitude of connections
that the field has with other branches of mathematics and 
mathematical physics. 

In these lectures, I am aiming to get to what I consider an exciting
recent progress in the field, namely the determination of the 
K-theoretic counts, in Sections \ref{s_toric} and \ref{s_actual}. 
Given the amount of background material needed, we are not going to
get much time for admiring the view from the top of that hill. 
I hope an interested reader will open more specialized notes
\cites{PCMI,slc}.

\section{What is the DT theory ?} 

%
Normally, in mathematics, we call a theory
a set of ideas and a certain body of knowledge united by the commonality of
applications, tools, and creators.  Galois theory, for example, helps
one solve some algebraic equations and cope with one's inability to
solve the remaining ones. While there is, certainly, a rapidly growing
body of knowledge in DT theory, and an equally rapidly 
expanding scope of its application, the original
use of the word \emph{theory} here rather followed the physics
tradition\footnote{I believe the term \textsl{DT theory} was used for
  the first time in \cite{MNOP1}.}. 

\subsection{What is a theory ? }\label{s_what_theory}

To a theoretical physicist, a theory is a procedure that can somehow,
perhaps approximately, compute a measurably quantity from the cast of
characters of a physical process and the geometry of the space-time
where this process is taking place. This typically 
involves some infinite-dimensional integration and a \emph{theory} could 
simply mean a specification of the integrand, ideally together with a
prescription for making sense of the integral. Seen from the math
viewpoint, a physical theory is a set of questions, and not necessarily a
set of answers. 

The integral above is typically over some space of fields (that is,
sections of suitable vector bundles) on the space-time manifold $M$ subject to 
certain boundary conditions on $\partial M$. For a mathematician, it
is perhaps easiest to relate to the statistical field
theory, where we can take finer and finer 
combinatorial approximations to $M$, and even similarly discretize the
range of fields if so desired. One is then standing on the solid
ground of finite-dimensional or even plainly finite probability
theory\footnote{The emergence of continuous structures in the small
  mesh/large scale 
limit may then be treated as a miracle. Surely, our ancestors sensed a similar
miracle as they were figuring out the relation between an integral and its 
Riemann sums.}. 

\begin{figure}[!htbp]
  \centering
   \includegraphics[scale=0.75]{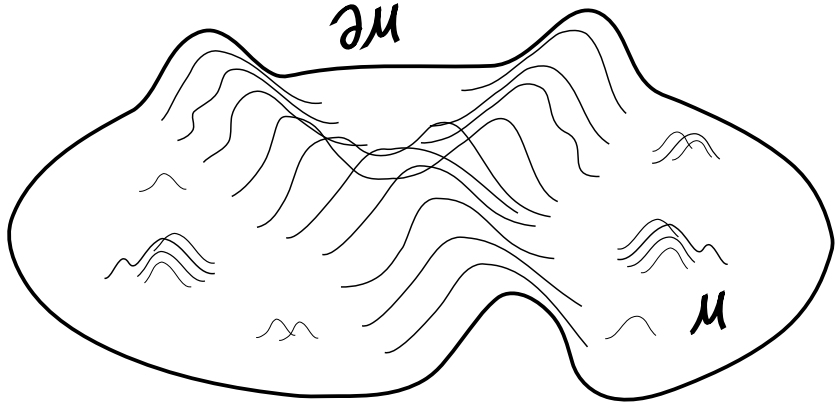}
 \caption{A physical theory has a spacetime manifold $M$ and fluctuating degrees of freedom
   that are subject to boundary conditions on $\partial M$. }
\label{f_fluc}
\end{figure}

A different scenario for cutting the integration down to finite
dimensions comes
about in \emph{supersymmetric} theories. These have a fermionic
symmetry that preserves the functional integral and makes the 
contributions of all nonsupersymmetric field configurations cancel
out, at least formally. The remaining supersymmetric 
configurations form, typically, a
countable union of finite-dimensional manifolds, over each one of
which one can in principle 
integrate. Summing up all these contributions is not
unlike taking the small mesh limit in the 
probabilistic setting\footnote{
And just like in the probabilistic setting, the extra properties that
such sums possess strengthen one's belief in the mathematical reality of
physical theories.}. 

\begin{figure}[!htbp]
  \centering
   \includegraphics[scale=0.5]{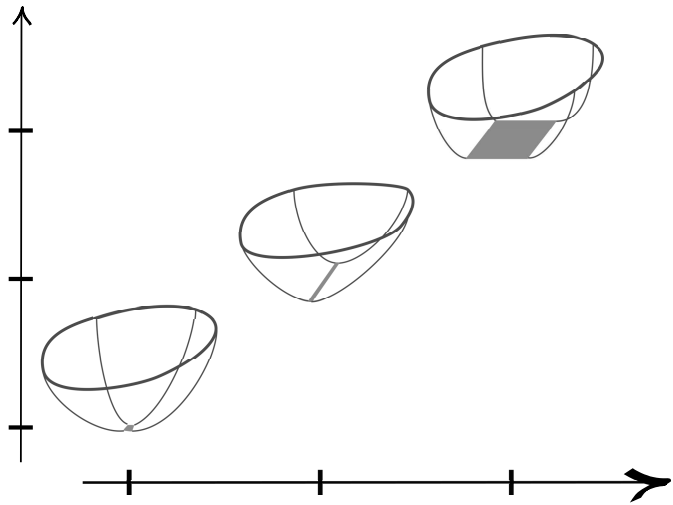}
 \caption{Energy landscape in supersymmetric theories: susy configurations minimize the energy for given
   topology, e.g.\ complex curves have minimal area,  instantons have
   minimal Yang-Mills action etc. For fixed topology, they form finite-dimensional moduli
   spaces, of growing dimensions.}
\label{f_super}
\end{figure}

\subsection{Boundaries and gluing}\label{s_boundaries} 

Whatever the exact flavor, a physical theory wants to have: 
\begin{itemize}
\item[---] a space-time manifold $M$, smooth or combinatorial, often
  with a metric, or some other fixed structures, and 
\item[---] some degrees of freedom that are constrained on the boundary $\partial M$ and fluctuate in
the bulk of $M$.  
\end{itemize}
The integral over these fluctuations defines a 
map\footnote{often called \emph{partition functions}
as a matter of habit, really. This custom has a certain charm in our context,
since there is indeed a lot of partitions around.}
$$
\bZ(M, \partial M) :\,\,  \textup{boundary conditions } 
\xrightarrow{} 
\textup{numbers} 
$$
which in a theory with  \emph{local} interactions has to satisfy the following basic gluing property. 

Let $S\subset M$ a hypersurface that avoids $\partial M$ and let $M_1$
and $M_2$ be the manifolds with boundary into which $S$ cuts $M$. In
the functional integral, we can first fix the degrees of
freedom along $S$ and then integrate over them. In a local theory, the
fluctuations in $M_1$ and $M_2$ will be independent once they are fixed
along $S$, giving
\begin{equation}
\bZ(M, \partial M) = \int \bZ(M_1, \partial_1 M \sqcup S) \, 
 \bZ(M_2, \partial_2 M \sqcup S) \,,\label{gluing}
 \end{equation}
where $\partial_i M = \partial M \cap M_i$ and the integral is over all possible boundary conditions on
$S$. To be sure, the integral in \eqref{gluing} is at this point purely
symbolical and is technically best expressed as a pairing between
suitable functional spaces associated with $S$. 

\begin{figure}[!htbp]
  \centering
   \includegraphics[scale=0.75]{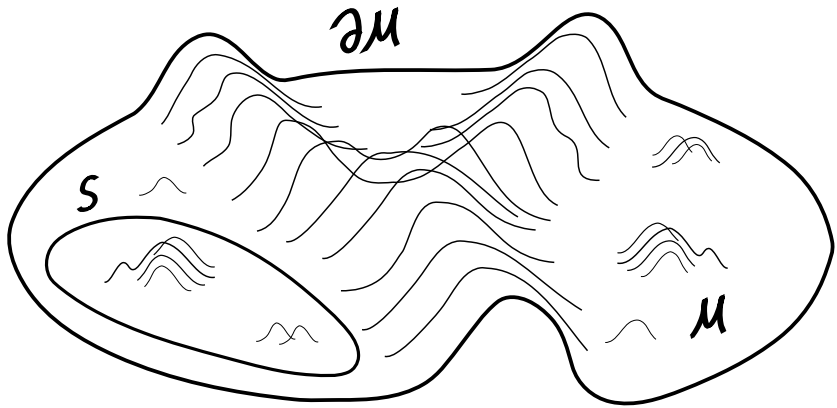}
 \caption{In a theory with local interactions, fluctuations on two
   sides of a hypersurface $S$ are independent once boundary conditions are imposed
 along $S$.}
\label{f_fluc_glue}
\end{figure}

The equality \eqref{gluing} is very powerful as it: 
\begin{itemize}
\item[---] lets one compute $\bZ(M,\partial M)$ by cutting $M$
  into small pieces, and 
\item[---] constraints these pieces thought the independence of 
\eqref{gluing} on the choice of $S$. \end{itemize}
It gains real strength if $\bZ(M,\partial M)$ factors through
some large equivalence relation like e.g.\ orientation preserving
diffeomorphisms, as this really reduces the number of standard
pieces. This is the case for \emph{topological} quantum field
theories \cites{AtTQFT, Kap, WitTQFT}, the Chern-Simons theory of real 3-folds being a very
important example. 

\subsection{Boundaries and gluing in algebraic geometry} 

\subsubsection{}

We want something like the setup of Section \ref{s_boundaries}
but for a smooth quasiprojective algebraic
variety $X$ in place of a smooth real manifold $M$. We don't loose or
gain anything by taking coefficients in $\C$, and for complex $X$ the
story can, in principle, be squeezed into the above real
mold. However, I think the cognitive efficiency is maximized here at the
level of analogies, not precise matches. 

We will allow 
noncompact $X$ as longs as objects that we want to count (basically,
complete cuves in $X$) form moduli spaces that are either
proper or proper over some other space of interest. Further, 
\emph{equivariant} counts with respect to some torus 
$$
\bT \subset \Aut(X)
$$
may be defined as long as $\bT$-invariant curves have compact moduli. 

\subsubsection{}

In place of $\partial M$, we will take an arbitrary smooth divisor
$D\subset X$. This is a very inclusive interpretation of an algebraic
analog of $\partial M$, people often insist on calling an effective 
anticanonical divisor the boundary of $X$, 
see in particular \cites{KhRos}. While this is certainly a very important special
case,  
there is
no reason for us not to look beyond it. 

\subsubsection{}

The analog of topological invariance will be the invariance of
$\bZ(X,D)$ with respect to deformations of the pair $(X,D)$.
This deformation will have to be equivariant for 
$\bT$-equivariant counts. 

\subsubsection{}

The analog of \eqref{gluing} will hold when $X$ degenerates into a
union of $X_1$ and $X_2$ as follows 
\begin{equation}
\xymatrix{ 
X \ar@{^{(}->}[r] \ar[d]& \widetilde{X} \ar[d] &  X_1 \cup_D X_2 \ar[d]
 \ar@{_{(}->}[l] \\
1 \ar@{^{(}->}[r] & \A^1&  0 \,, 
 \ar@{_{(}->}[l]
}\label{degen}
\end{equation}
where $\A^1$ is the base of the deformation, the total space
$\widetilde{X}$ is smooth, and the special fiber is the transverse
union of two components along a smooth divisor $D$.  A one-dimensional 
example of this is 
$$
\{ xy = 1 \}  \subset \{ xy = t \}  \supset \{xy=0\}\,, 
$$
where $t$ gives the map to $\A^1$, and this is what it looks like in
general in directions transverse to $D$. In \eqref{degen}, we assume
that 
$D$ is disjoint from any other divisors chosen in $X$, in parallel to
the $S \cap \partial M = \varnothing$ assumption in Section 
\ref{s_boundaries}. 

\begin{figure}[!htbp]
  \centering
   \includegraphics[scale=1]{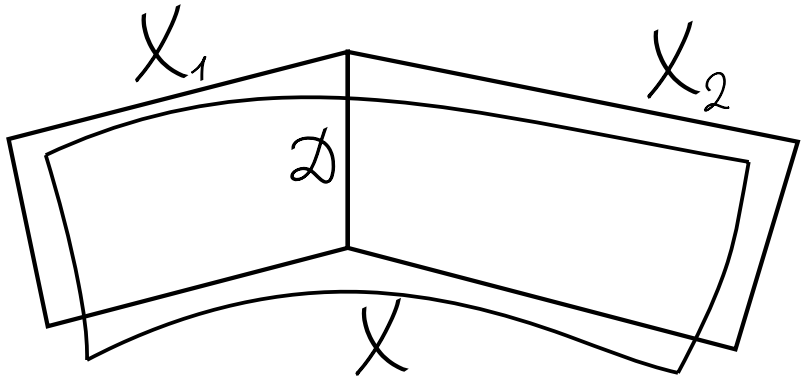}
 \caption{The algebraic analog of cutting a manifold along a
   hypersurface is a degeneration of $X$ to a transverse union of
$X_1$ and $X_2$ along a divisor.}
\label{f_degen}
\end{figure}

\subsubsection{}

An important example of such degeneration is 
\begin{equation}
\xymatrix{ 
X \ar@{^{(}->}[r] \ar[d]& 
\textup{Blowup}_{D\times \{0\}} (X \times \A^1) \ar[d] &  X \cup_D 
\B \ar[d]
 \ar@{_{(}->}[l] \\
1 \ar@{^{(}->}[r] & \A^1&  \,\,\, 0 \,, 
 \ar@{_{(}->}[l]
}\label{blowoff}
\end{equation}
which can be considered for any smooth divisor $D \subset X$. 
Here 
\begin{equation}
  \label{defB}
  \B = \bP(N_{X/D} \oplus \cO_D)
\end{equation}
is $\bP^1$-bundle over $D$ associated to the 
rank 2 vector bundle $N_{X/D} \oplus \cO_D$, where
$N_{X/D}$ is the normal bundle to $D$ in $X$. 

The manifold \eqref{defB} is an algebraic analog of 
$$
\begin{matrix}
\textup{tubular neighborhood}\\
\textup{of $\partial M$ in $M$} 
\end{matrix} \quad \cong \quad \partial M \times [0,1] 
$$
and similarly to this it has two boundary pieces isomorphic to $D$
namely 
\begin{equation}
D_0 = \bP(N_{X/D}) \subset \B \supset \bP(\cO) = D_\infty
\,. \label{D0Dinf}
\end{equation}
Practitioners of DT theory like to give affectionate nicknames to the
objects of the study, but there is little or no consistency in this.
We will call $\B$ \emph{bubble} in these notes, this is not a name that is
in common use. 

\begin{figure}[!htbp]
  \centering
   \includegraphics[scale=0.64]{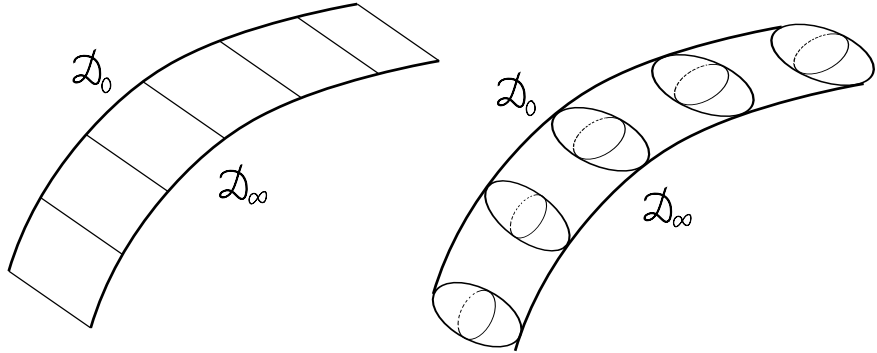}
 \caption{An algebraic geometer's and a complex geometer's impression
   of the bubble $\B$. This is the algebraic analog of a tubular
   neighborhood of $\partial M$ in $M$.}
\label{f_bubble}
\end{figure}

\subsubsection{}

There is a natural
equivalence relation induced by \eqref{degen}, namely 
\begin{equation}
[X] = [X_1] + [X_2] - [ \B] \label{LP}
\end{equation}
where
$$
\B = \bP(N_{X_1/D} \oplus \cO_D) = \bP(N_{X_2/D} \oplus \cO_D)  \,. 
$$
The two formulas for $\B$ are equivalent because 
$N_{X_1/D} \cong N_{X_2/D}^{\vee}$.

A very powerful result of Levine and Pandharipande \cites{LP} is that the 
equivalence relation \eqref{LP} 
generates all relations of the algebraic cobordism, and, in particular, any projective $X$ may be linked to a product of projective spaces by a
sequence of such degenerations. While \eqref{LP} 
by itself does \emph{not} reduce the
DT theory to that of 
\begin{equation}
X \in \left\{\bP^3,\bP^2 \times \bP^1,\left(\bP^1\right)^3\right\}
\,,\label{prodP}
\end{equation}
it limits the number of standard pieces from which all
other geometries may be assembled. 

\subsubsection{}

In the presence of a group action, e.g.\ for the toric varieties in 
\eqref{prodP}, equivariant localization gives a
way to cut $X$ further into really noncompact pieces, and this will be
very 
important for us in Section \ref{s_toric}. 

\subsection{What fluctuates ?}

\subsubsection{} 

Donaldson-Thomas theory is for algebraic threefolds
$$
\dim X = 3 
$$
and in the original vision of \cites{DonTh} the fluctuating degrees of
freedom are given by a coherent sheaf $\cF$ on $X$, or maybe a complex
thereof. For a second, one can imagine that $\cF$ is an algebraic
vector bundle on $X$, although the main role in what follows will be
played by sheaves in a sense diametrically opposite to vector
bundles. 

The inspiration for this came, among other things, from the 
Chern-Simons theory of real 3-folds, and also from Donaldson's theory
for real 4-folds. The latter involves integration over moduli spaces of 
instantons, that is, connections with antiselfdual
curvature. For a K\"ahler surfaces $S$ viewed as a real 4-fold, a
theorem of Donaldson \cites{DonKr} identifies instantons with stable
holomorphic vector bundles on $S$, and so one is really integrating
over those. On the other hand, holomorphic bundles on a complex 3-fold
are critical points of the holomorphic Chern-Simons functional \eqref{hCS},
which links DT and CS theories. 

A most superficial review of these
points will be attempted in Section \ref{s_before} below. 

\begin{figure}[!htbp]
  \centering
   \includegraphics[scale=0.46]{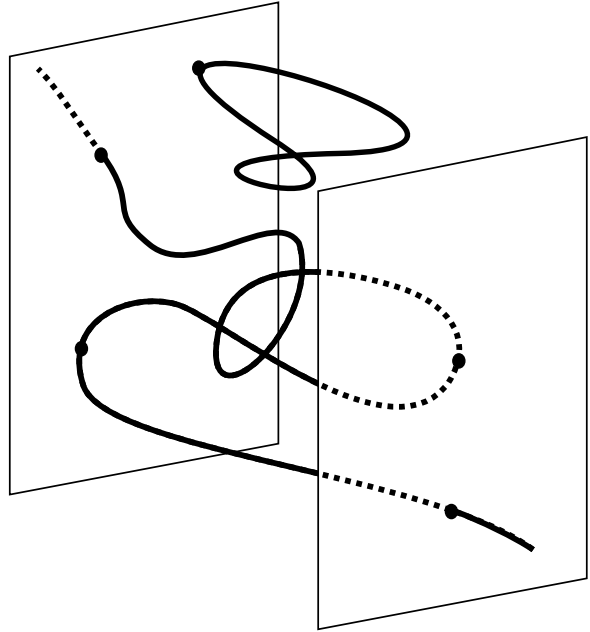}
 \caption{For us, the DT theory is the theory of fluctuating algebraic
 curves in $X$, constrained by how they meet fixed divisors $D\subset X$.}
\label{f_DT}
\end{figure}

\subsubsection{} 

In these notes, however, we won't see any vector bundles on $X$, and
instead we will be looking at sheaves that look like the first or,
equivalently, the last term in an exact sequence of the form 
\begin{equation}
  \xymatrix{
&& \textup{\small functions on $X$}  \ar@{=}[d]\\ \vspace{-3pt}
0 \ar[r] &\cI_C \ar[r] & \cO_X\ar[r] & \cO_C\ar[r] & 0 \\
& \textup{\small equations of $C$}  \ar@{=}[u] && \textup{\small functions on $C$}  \ar@{=}[u]
}  \label{IOO} 
\end{equation}
where $C\subset X$ is a curve, or more precisely, a $1$-dimensional projective
subscheme. Of course, in general a subscheme is precisely defined from
a subsheaf or a quotient of $\cO_X$ as in \eqref{IOO}, but at the first
encounter it may be useful to think of a nice smooth curve which is embedded in
$X$ and then allowed to move. 

The moduli space for complexes \eqref{IOO} is the good old Hilbert
scheme $\Hilb(X,\textup{curves})$ of Grothendieck, see e.g.\ \cites{FDA,Koll}. The Hilbert scheme
 can be defined for $C$
and $X$ of arbitrary dimension, but there is something new and different
in how one integrates over it, and other moduli of sheaves, for $\dim
X=3$. 

\subsubsection{}

The Hilbert scheme splits into components 
$\Hilb(X,[C],\chi)$ according to the degree and 
the arithmetic genus of $C$ 
$$
([C],\chi(\cO_C))  \in H_2(X,\Z) \oplus H_0(X,\Z) \,, 
$$
and a DT partition function is defined as a generating function over 
these discrete invariants. This means that 
\begin{equation}
\bZ(X,D) =  \sum_{(d,n) \in H_2(X,\Z) \oplus H_0(X,\Z)} 
Q^d \, (-z)^n \,\,\, \int_{\Hilb(X,d,n)} \dots \label{ZXD}
\end{equation}
where $z$ and $Q$ are new variables serving to keep track of the
degree and genus, and $Q^d$ is a multiindex. The minus sign in 
$(-z)^n$ in \eqref{ZXD} is introduced to save on signs in many other 
places. 

\subsubsection{}\label{s_DT_higher_rank}

There is a number of good reasons to focus on curves within the universe of
coherent sheaves, among them:
\begin{itemize}
\item[---] the theory can be defined for very general $X$, in
  particular, without any assumptions\footnote{Special things happen
    if $\cK_X \cong \cO_X$. Some levels of complexity collapse then, while new
    possibilities also open. It is, however, awkward to restrict to
    just that case as it is not preserved by degenerations \eqref{degen}.} on the canonical class $\cK_X$, 
\item[---] the corresponding counts have a multitude of deep connections to
  other enumerative theories as well as other fields of
  mathematics such as representation theory, 
\item[---] I expect, and I imagine that this expectation is shared by
  many, that the curve counts are fundamental and other sheaf counting
  problems may be reduced to them in some effective way. 
\end{itemize}
Certainly, the last part is only a vague expectation and useless
unless made precise. As a transition from curves to higher rank
sheaves one can study theories where e.g.\ more than one section of $\cF$ in
\eqref{cOq} are considered, see e.g. \cites{Bakk}.

\subsection{Integration} 

We need to explain what is meant by the integral in
\eqref{ZXD}. Certainly cutting down the functional integral to an
integral over a finite-dimensional space constitutes major progress,
but the truth is that $\Hilb(X,d,n)$ has an unknown number of
irreducible components of unknown dimensions, so it is not clear how
one could integrate anything over its fundamental class. Human
incompetence in basic geometry of Hilbert schemes starts with the 
simplest possible case $\Hilb(\A^3,0,n)$ of points in a linear space. 
The dimension of this scheme is unknown except for small $n$. 

While describing e.g.\ the irreducible components of $\Hilb(\A^3,0,n)$
is an interesting geometric challenge, this is not the direction in
which the development of the DT theory goes. Going back all the way to
Section \ref{s_what_theory}, the finite-dimensional 
integral over 
$$
\cM = \textup{moduli of susy configurations} 
$$
is really a residue of a
certain infinite-dimensional integral. The space $\cM$ is cut out 
by 
certain equation, typically nonlinear PDEs like the equation
$\debar^2=0$ for a connection on a holomorphic vector bundle. When 
these equations are transverse, $\cM$ is a finite-dimensional
manifold and we may expect the functional integral to
localize to the Lebesgue measure on $\cM$.  

However, in real life, equations are seldom transverse, which
is how one is getting very singular varieties of unknown
dimension whose 
fundamental cycle has nothing to do with the original integral. The 
original integral localizes to a certain \emph{virtual fundamental
  cycle}
\begin{equation}
  \label{vircyl}
  \left[ \cM \right]_\vir \in H_{2 \virdim}
\left(\cM\right)\,, 
\end{equation}
where the virtual, or expected, dimension may be computed from
counting the degrees of freedom and is typically some Riemann-Roch
computation. For example 
$$
\virdim \Hilb(X,d,n)= d \cdot c_1(X) \,. 
$$
Analytically, the virtual cycle may be constructed by a very small
perturbation of the equations that puts them into general position. 
For what we have in mind, this is both too 
inexplicit and will certainly break the symmetries of the
problem that are so important for equivariant counts. 

It is much
better to treat the problem as an excess intersection problem in 
algebraic geometry and construct the cycle using certain vector 
bundles on $\cM$ that describe the deformations and equations at
$\cM$. This data is called an \emph{obstruction theory} and the
construction of virtual cycles from it was given in the fundamental
paper of Behrend and Fantechi \cites{BF}. The obstruction theory for DT
problems was constructed by R.~Thomas in \cites{ThCass}, this is the 
technical starting point of DT theory. We will get a feeling how this
works below. 

With this, \eqref{ZXD} should be made more precise as follows 
\begin{equation}
\bZ(X,D) = \sum_{(d,n) \in H_2(X,\Z) \oplus H_0(X,\Z)} 
Q^d \, (-z)^n \,\,\, \int_{\left[\Hilb(X,d,n)\right]_\vir} \dots \label{ZXD2}
\end{equation}
where dots stand for cohomology classes that come from the boundary 
conditions, as will be discussed in a moment. For now we note that
beyond the virtual fundamental cycle, the DT moduli spaces $\cM$ have
a virtual $\hat A$-genus 
$$
\tO_\vir \in K(\cM) 
$$
with which one can define a K-theoretic analog of \eqref{ZXD2} 
\begin{equation}
\bZ(X,D)_\textup{K-theory} = \sum_{(d,n) \in H_2(X,\Z) \oplus H_0(X,\Z)} 
Q^d \, (-z)^n \,\,\, \chi\left(\tO_{\Hilb(X,d,n),\vir} \otimes \dots \right)\label{ZXDK}
\end{equation}
defined in the $\Aut(X)$-equivariant K-theory.

\subsection{PT theory} 

\subsubsection{}

Abstract DT theory may be described as counting stable objects in
categories that are akin (that is, have similar properties of 
$\Extd$-groups) to coherent sheaves on a smooth threefold. 
These notes are not the place to go into a discussion of 
stability and its variations, which is a deep and technical notion,
see e.g.\ \cites{Br1,Br2,BrAMS,KS}. But one example of how the change of stability can
drastically simplify moduli spaces must be discussed. This is the
moduli space of stable pairs, a simpler cousin of the Hilbert schemes
of curves, first used in the enumerative context by Pandharipande and
Thomas \cites{PT1,PT2}. 

\subsubsection{}

Recall that a point in the Hilbert scheme of curves corresponds to a
coherent sheaf $\cF=\cO_C$ that is a 
$1$-dimensional quotient 
\begin{equation}
  \label{cOq}
  \cO_X \xrightarrow{\,\, s\,\,} \cF \to 0 
\end{equation}
of $\cO_X$. In this arrangement, the map $s$ is very good 
(surjection)
but the sheaf $\cF$ can be quite bad. The problems with $\cF$ start
already for the Hilbert schemes of points, that is, for 
$\dim \cF = 0$, and pollute all curve counts of positive degree.
One can't help feeling that there should a way to disentangle the
contributions of points and curves and, to a large extent, this is
exactly what PT theory achieves. 

In PT theory, one ask less of $s$ and more of $\cF$. What we want from
$\cF$ is to be a \emph{pure} 1-dimensional sheaf, that is, to have no
0-dimensional subsheaves. For instance, if $\dim \supp \cF=0$
then we must have $\cF=0$. What we are willing to allow of $s$ is that 
$$
\dim \Coker s = 0
$$
instead of $\Coker s = 0$. 

For example, if the support of $\cF$ is a reduced smooth curve $C$
then the only stable pairs are of the form 
\begin{equation}
  \label{XsF}
  \xymatrix{
\cO_X \ar[rr]^{\!\! \!\!s\,\,\quad} \ar[dr] && \cF= \cO_C(\textstyle{\sum} p_i)  \\
& \cO_C \ar[ur]} 
\end{equation}
where $\{p_i\} \subset C$ is an unordered collection of points and the
other two maps in \eqref{XsF} are the natural ones. In other words,
the fiber of the PT moduli spaces over the point
$$
[C] \in \Chow(X)=\{\textup{effective $1$-cycles in $X$}\} 
$$
corresponding to a
smooth curve $C\subset X$ 
is $\bSd(C)= \bigsqcup_n \bS^n C$. This is infinitely
simpler than the fiber of the Hilbert schemes of curves. 

\subsubsection{}
Obviously, PT counts cannot agree with the Hilbert scheme counts
because they do not agree already in degree 0. It is, however,
conjectured that they agree once one takes this into account and
divides out the contribution of points from the generating function,
that is 
\begin{equation}
\bZ_\textup{PT} (Q,z) = \bZ_\textup{Hilb} (Q,z)
\bigg/ \bZ_\textup{Hilb}  (0,z) \,. 
\label{DTPT}
\end{equation}
The denominator in \eqref{DTPT} was computed explicitly for an
arbitrary 3-fold for the needs of the GW/DT correspondence. 
This will be discussed in Section \ref{s_deg0}. 

For toric 3-folds, the equality \eqref{DTPT} comes out of the whole
machine that computes their DT counts \cites{MOOP}.  For Calabi-Yau 3-folds,
Y.~Toda gave a proof \cites{Toda1,Toda2} that analyses 
\emph{wall-crossing} connecting 
the Hilbert scheme stability condition to the stability condition for
stable pairs. Wall-crossing techniques have not been really explored outside the
world of Calabi-Yau varieties.

\subsection{Actual counts} 

Actual DT counts are very complex. While they are defined for a
general smooth divisor $D$ in a general smooth threefold $X$, it is
not even clear if there is a universal language in which the answers
can be stated, starting with the case $D=\varnothing$. 

We understand the answers explicitly for very special $X$, like the toric
varieties with $D=\varnothing$ or certain fibrations with 
$
D=\bigcup \textup{\sl Fibers}
$
 that will be discussed in Sections \ref{s_toric} and
\ref{s_actual}. Already these are very rich and seem to require, more
or less, all of the mathematics that falls into the author's area of expertise
for their understanding. 

I view this is a great advantage of the DT theory: there are certainly
whole galaxies of open problems in it of all possible flavors --- from
foundational to combinatorial. One of my goals in these notes is to
help potential explorers of these galaxies to get a sense of what
awaits them there.

\section{Boundary conditions and gluing} 
\label{s_boundary} 

Now we put the boundary conditions in \eqref{ZXD2}, that is,
we constrain the fluctuating curve $C$ by how it intersects our fixed
divisor $D$.

\subsection{Intersection with the boundary}

\subsubsection{} 

Algebraically, functions on $C \cap D$ are the functions on 
$C$ taken modulo the equation of $D$, that is, they are the cokernel in 
\begin{equation}
  \label{Ccap D}
  \cO_C \xrightarrow{\textup{$\times$ equation of D }} \cO_C \to 
\cO_{C\cap D} \to 0 \,. 
\end{equation}
There is an open locus 
\begin{equation}
\Hilb(X, d, n)^{\perp} \subset \Hilb(X, d, n)\label{Hilbo}
\end{equation}
where the first arrow in \eqref{Ccap D} is injective. This is the
correct transversality condition on $C \cap D$ for us\footnote{
The ordinary transversality means that, additionally, $C \cap D$ is
reduced.}. For transverse curves \eqref{Hilbo}, we
have a well-defined map 
\begin{equation}
\cdot \, \cap D : \quad \cO_C \mapsto \cO_{C \cap D} \in 
\Hilb(D, d \cdot [D]) \,, \label{CcapD}
\end{equation}
to the Hilbert scheme of $d \cdot [D]$ points in $D$. 

\subsubsection{}

Contrary to the 3-fold case, 
Hilbert schemes of points in 
surfaces are exceptionally nice algebraic
varieties. In particular, they are smooth and connected of dimension 
$$
\dim \Hilb(D, k) = 2 k \,. 
$$
An introduction to their geometry may be found in \cites{Lehn,
NakL}. As lots of
things in DT theory are built on the geometry of $\Hilb(D,k)$, we will be
often coming back to it in these notes. 

\subsubsection{} 

Imposing boundary conditions means 
integrating along the fibers of \eqref{CcapD} or, equivalently,
pulling back cohomology classes from $\Hilb(D)$ via \eqref{CcapD}. 
Neither is well-defined because 
\eqref{Hilbo} is open, meaning some further details need to be filled in
here. 

There are, in fact, at least 3 ways to do it, as will be
discussed presently. The conventions on what to call these flavors of boundary condition
vary. We will call them \emph{nonsingular}, \emph{relative}, and 
\emph{descendent}, respectively. While they all express the same 
general geometric idea of $C$ meeting $D$ in a particular way, an 
actual geometric translation between them is not trivial and plays 
an important role in the development of the theory, see below. 

\subsubsection{}\label{s_H_Fock} 
Once the technical details are filled in, the special structures in
the (co)homology of the Hilbert scheme serve to organize the DT data
in a very nice way. In particular, one can interpret \eqref{ZXD2} 
as 
\begin{equation}
\bZ(X,D) = (\cdot \cap D)_* \left(\sum
Q^d \, (-z)^n \,\left[\Hilb(X,d,n)\right]_\vir \right)
\in \Hhd(\Hilb(D))[[Q,z]]\,, \label{ZXD3}
\end{equation}
where
\begin{equation}
\Hhd(\Hilb(D)) = \bigoplus_k \Hhd(\Hilb(D),k) \cong 
\Fock \left(\Hhd(D)\right)\label{Fock} \,. 
\end{equation}
The important and powerful identification of (co)homology with 
Fock space\footnote{The definition of a Fock space is recalled in
  \eqref{bcSym} below.}
modelled on the (co)homology of the surface $D$ itself is a classical
result of \cites{NakHart}.

\subsection{Different flavors of boundary conditions}\label{s_flavors}

\subsubsection{Nonsingular bc}

The $\bT$-fixed locus 
$$
\left(\Hilb(X, d, n)^{\perp}\right)^\bT\subset \Hilb(X, d, n)^\perp 
$$
may be compact for some torus $\bT \subset \Aut(X,D)$, in which case
the boundary conditions may be imposed for equivariant counts in
localized equivariant cohomology without further geometric
constructions. 

For example, the bubble \eqref{defB} has a fiberwise $\Ct$ action that 
fixes both divisors in \eqref{D0Dinf} and can be used to impose the
nonsingular boundary conditions at either of them. As an 
  exercise, one should check that imposing nonsingular conditions at
  both $D_0$ and $D_\infty$ gives trivial DT counts, in the sense that 
$$
\left(\Hilb(\B, d, n)^{\perp D_0 \cup D_\infty}\right)^{\Ct} 
\cong 
\begin{cases}
\Hilb(D,k)\,,  & (d,n)= k [ \textup{fiber $\B\to D$} ]\,, \\
\varnothing\,, & \textup{otherwise} \,,
\end{cases}
$$
and that in the former case the virtual cycle pushes forward to 
$$
\textup{diagonal}\subset
 \Hilb(D_0,k) \times \Hilb(D_\infty,k) \,. 
$$

\subsubsection{Relative bc} 

There is a resolution of the map \eqref{CcapD}, that is, a 
diagram of the form 
\begin{equation}
  \label{resol}
  \xymatrix{
& \Hilb(X/D,d,n)\ar[dr]^{\quad \cap D }\\
\Hilb(X,d,n)^\perp\ar[rr]\ar@{^{(}->}[ru]&& \Hilb(D,d\cdot[D])  
}
\end{equation}
in which the new map to $\Hilb(D,d\cdot[D])$ is proper. Here
$\Hilb(X/D,d,n)$ is the Hilbert scheme of curves \emph{relative} the divisor
$D$. It is constructed using J.~Li's theory of expanded degenerations
\cites{Li1}. It 
allows $X$ to sprout off new components as in \eqref{blowoff} as many 
times as needed to keep $C$ transverse to $D$. A more complicated
instance of the same general phenomenon is illustrated in Figure 
\ref{f_curve_degen}. 

Note that in the central fiber of \eqref{blowoff} 
the proper transform of the divisor $D$
escapes to the new component and becomes the divisor $D_\infty$ in 
\eqref{D0Dinf}. This is the new divisor $D$ with which we intersect the subscheme
$C$.

With relative boundary conditions, we integrate over $\Hilb(X/D,d,n)$ in
\eqref{ZXD2} and \eqref{ZXD3}. 

\subsubsection{Descendent bc} 

Instead of trying to pull back cohomology classes from 
$\Hilb(D,d\cdot[D])$, we can construct the corresponding classes
directly on $\Hilb(X,d,n)$ as follows. It is well known that the 
$\Hd(\Hilb(D))$ is generated by the K\"unneth components of the 
Chern classes of $\cO_\cZ$ where 
$$
\cZ \subset \Hilb(D) \times D
$$
is the universal subscheme. We have a well-defined class 
\begin{equation}
\cO_C \otimes_\textup{K-theory} \cO_D \in K^\circ(\Hilb(X) \times
D)\label{CDK}
\end{equation}
where $K^\circ$ is the K-theory of locally free sheaves. It has Chern
classes, the K\"unneth components of which can be inserted in the 
integral over $\Hilb(X)$.

Note that these descendent cohomology classed \emph{do not} factor 
through the relations in $\Hd(\Hilb(D))$. In other words, they are
really not pulled back via some map like \eqref{CcapD}. As we will see
in a minute, they can be translated into the relative conditions, but
the coefficients in this translation are functions of $Q$ and $z$. 

Also note that the structure sheaf of the universal curve 
$$
\cO_C \in K^\circ( \Hilb(X,d,n) \times X) 
$$
also has Chern classes which we can similarly use to produce 
cohomology classes on $\Hilb(X,d,n)$. These can be also inserted into 
the integral in \eqref{ZXD2}. Such insertions constrain the behaviour
of $C$ in the bulk of $M$. 

Note that the supply of descendent classes is, a priori, indexed not by the Fock
space in \eqref{Fock} but by 
$$
\Hd_{GL(\infty)}(\pt) \otimes \Hd(D) \, \rightarrow  \,
\textup{descendent bc} \,,
$$
that is, by arbitrary Chern classes colored by the cohomology of
$D$. Chern classes colored by the cohomology of $X$ produce
descendent bulk insertions.

\subsection{Gluing}\label{s_gluingDT} 

\subsubsection{}

The following geometric considerations lead to the gluing formula in
DT theory. Let $X$ break up in two pieces as in \eqref{degen} and 
in Figure \ref{f_degen}. This is as nice a degeneration of an algebraic
variety as you will ever see and we want to have the nicest
possible degeneration of the corresponding Hilbert schemes. The Hilbert scheme
of the singular variety
$$
X_ 0 = X_1 \cup_D X_2 
$$
while perfectly well-defined, is not nice, because if $C$ meets the 
$D=(X_0)_\textup{sing}$ in a nontransverse way then this causes all
sort of problem including a failure in the obstruction theory. 

The solution, provided by J.~Li's theory of expanded degenerations
\cites{Li1} is to
allow $X_0$ to open up accordions of the form 
$$
X_0[k] = X_1 \cup_{D_1} \underbrace{\B \cup_{D_2} \dots \cup_{D_k} \B}_\textup{$k$ times}  \cup_{D_{k+1}} X_2 
$$
where the bubble $\B=\bP(N_{X_1/D} \oplus \cO_D)$ is already familiar
from \eqref{defB} and the discussion of relative boundary conditions. The divisors 
$$
D_1 \cong D_2 \cong \dots \cong D_{k+1} \cong D 
$$
are the copies of $D$ that appear together with the bubbles $\B$. The 
subschemes $C\subset X_0[k]$ what we allow have the form 
$$
C= C_0 \cup C_1 \cup \dots \cup C_k \cup C_{k+1}
$$
where the components $\{C_i\}$ are transverse to the divisors $\{D_i\}$
and glue along them in the sense that 
$$
C_i \cap D_{i+1} = D_i \cap C_{i+1}\,. 
$$
A new bubble blows up any time transversality is in danger and 
the moduli spaces for different $k$ fit together
into one orbifold 
$$
\Hilb(X_0[\cdot]) = \bigcup_{k\ge 0} \Hilb(X_0[k])_\textup{semistable} \, \Big/ 
(\Ct)^k 
$$
where $\Ct$ acts on the $\bP^1$-bundle fiberwise preserving the two
divisors in \eqref{D0Dinf}. This is the right object to put as a
central fiber in the family of degenerating Hilbert schemes.

\begin{figure}[!htbp]
  \centering
   \includegraphics[scale=1]{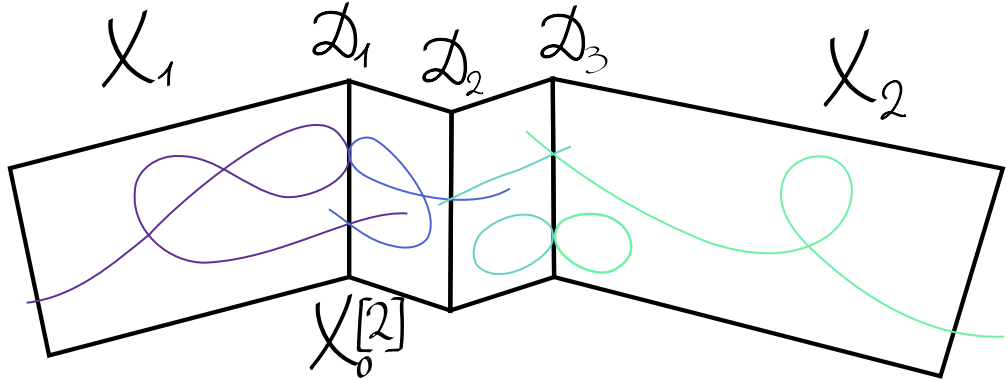}
 \caption{
A curve in an expanded degeneration $X_0[n]$ is a collection of 
subschemes in each
component that are transverse to the singular divisors and glue along
them.}
\label{f_curve_degen}
\end{figure}

\subsubsection{}
The gluing formula for DT proven in \cites{LiWu} is the combination of two
statements. First 
\begin{equation}
\int_{[\Hilb(X)]_\vir} \dots = \int_{[ \Hilb(X_0[\cdot]) ]_\vir} \dots
\,,\label{deform0}
\end{equation}
where the integrand is defined e.g.\ using boundary conditions away
from $D$ or in any other way that makes sense as a cohomology class on
the whole family of the Hilbert schemes. In other words, with the 
right definition, the deformation invariance
of DT counts in the family 
\eqref{degen} includes the central fiber. 

\subsubsection{} 

The second part of the gluing formula says that the integral over the
right-hand side in \eqref{deform0} can be computed in terms of DT
counts in $X_1$ and $X_2$ relative the gluing divisor $D$. Concretely 
\begin{equation}
  \label{Zglue}
  Z(X) = \Big(Z(X_1/D), (-z)^{|\,\cdot\, |} \, Z(X_2/D) \Big)_{\Hhd(\Hilb(D))}
\end{equation}
where we interpret $Z(X_1/D)$ as in \eqref{ZXD} and $|\,\cdot\, |$ is
the grading operator on the Fock space \eqref{Fock}, it acts by $k$ on
the $k$th term in the direct sum in \eqref{Fock}. We write 
$Z(X_1/D)$ to stress the fact that \emph{relative} conditions are
imposed at the divisor $D$. 

The extra weight $(-z)^{|\,\cdot\, |}$ in \eqref{Zglue} is there because 
\begin{equation}
\chi(\cO_C) = \chi(\cO_{C_1}) + \chi(\cO_{C_2}) - 
\chi(\cO_{C_1 \cap D}) \label{chiglDT}
\end{equation}
if $C=C_1 \cup_{C_1 \cap D} C_2$ is a transverse union along the
common intersection with $D$. 

\subsubsection{}
For K-theoretic counts, the first part \eqref{deform0} holds
verbatim, but there is a correction to \eqref{Zglue} discovered in
\cites{Giv1}. It can be phrased by saying that the natural bilinear form
used in \eqref{Zglue} is deformed for K-theoretic counts in a way that
depends on $Q$ and $z$. An introductory discussion of this issue may
be found in Section 6.5 of \cites{PCMI}.

\subsection{Correspondence of 
boundary conditions}\label{s_corresp}

\subsubsection{}

Let $D\subset X$ be a smooth divisor and imagine that some mystery
boundary conditions are imposed on $D$ for curve counts in $X$. We can
bubble off $D$ as in \eqref{blowoff} and conclude from the gluing
formula that 
\begin{multline}
  \label{Mystery}
  Z(X, \textup{mystery at $D$}) = \\
\Big(Z(X/D_0), (-z)^{|\,\cdot\, |} \, Z(\B,
\textup{relative at $D_0$}, \textup{mystery at $D_\infty$})
\Big)_{\Hhd(\Hilb(D_0))} \,. 
\end{multline}
We conclude from \eqref{Mystery} that DT counts in $\B$ with two
flavors of the boundary conditions at $D_0$ and $D_\infty$
respectively precisely give the translation between different boundary
conditions. Note this translation depends on $Q$ and $z$. 

\subsubsection{}

This is completely parallel to how one can translate between boundary
conditions by imposing a full set of alternative boundary conditions
on the other boundary of the tubular neighborhood in Figure
\ref{f_fluc}. 

While the correspondence given by \eqref{Mystery} is a useful
general statement, the concrete identification of 
$Z(\B,D_0,D_\infty)$ may be a
very challenging problem.

\subsubsection{}

Recall that $\B$ has a fiberwise $\Ct$-action that fixes $D_0$ and
$D_1$. Equivariant localization with respect to this action gives 
\begin{multline}
  Z(\B,
\textup{$\mathsf{bc}_0$ at $D_0$}, \textup{$\mathsf{bc}_\infty$ at $D_\infty$})
 = \\
\left(Z(\B,
\textup{$\mathsf{bc}_0$ at $D_0$}, \textup{ns at $D_\infty$}) , 
Z(\B,\textup{ns at $D_0$}, \textup{$\mathsf{bc}_\infty$ at $D_\infty$}) 
\right)_\textup{edge} 
\label{edge_bc} 
\end{multline}
where $\mathsf{bc}_0$ and $\mathsf{bc}_\infty$ denotes some arbitrary 
boundary conditions and the edge inner product on 
$\Hhd(\Hilb(D))$ is a new inner product that reflects the nontriviality of
the fibration 
$$
\pi_\B: \B \to D\,.
$$
It comes from the edge terms in localization formulas, see below, and 
is determined by the deformation theory of ``constant'' curves in $\B$, that is, 
ideal sheaves of the form 
$$
\pi^*_\B(\cI) \in \Hilb(\B) \,, \quad \cI \in \Hilb(D)  \,. 
$$
Using \eqref{edge_bc} one can replace arbitrary boundary conditions by
nonsingular ones in equivariant theory.

\section{Before DT} \label{s_before}

\subsection{Gromov-Witten theory}

There is another curve-counting theory in algebraic geometry which can
be defined for a smooth $X$ of arbitrary dimension. It traces its
origins to the topological strings in the same way as DT theory
descends from the study of supersymmetric gauge theories. 

\subsubsection{Moduli spaces} 

In GW theory, one
integrates over the moduli spaces 
$\Mbar_{g,n}(X,d)$ of \emph{stable maps} to $X$, where a point of
$\Mbar_{g,n}(X,d)$ corresponds to the data 
\begin{equation}
(C,p_1,\dots,p_n) \xrightarrow{\,\, f \, \,} X \label{Mgndata}
\end{equation}
in which 
\begin{align*}
C &\quad \textup{is an at worst nodal curve of genus $g$} \,,\\
p_1,\dots,p_n \in C &\quad \textup{are smooth distinct marked points of
  $C$} \,, \\
f & \quad \textup{is a map of degree $\deg f:= 
f_*[C] = d$\,.} 
\end{align*}
Two maps are isomorphic if there exist a triangle 
\begin{equation}
  \label{isom}
  \xymatrix{ 
(C,\{p_i\}) \ar[rd]_f\ar[rr]^\phi && (C',\{p'_i\})\ar[dl]^{f'}\\
& X 
}
\end{equation}
in which $\phi$ is an isomorphism of pointed curves. As a stability
condition for the data \eqref{Mgndata}, one requires the automorphism
group of \eqref{Mgndata} to be finite. 

The moduli spaces $\Mbar_{g,n}(X,d)$ have a canonical perfect
obstruction theory of virtual dimension
\begin{equation}
\virdim = (g-1)(3-\dim X) + c_1(X) \cdot d + n \,,\label{virdimGW}
\end{equation}
and the corresponding virtual fundamental classes in cohomology. In
their 
K-theory, one has virtual structure sheaves but, in general, one does
\emph{not} have a good virtual $\hat A$-genus, in contrast to DT
theories. 

In GW theory one forms generating functions analogous to \eqref{ZXD2}
\begin{equation}
\bZ_{\textup{GW}}(X; Q,u) = \sum_{(d,g) \in H_2(X,\Z) \oplus \Z} 
Q^d \, u^{2g-2} \,\,\, \int_{\left[\Mbar_{g,n}(X,d)\right]_\vir} \dots \label{ZGW}\,,
\end{equation}
the variable $u$ in which corresponds to the string coupling constant
of topological strings. 

An excellent introduction to stable maps and GW theory may be found in
\cites{mirror_book}.

\subsubsection{Relative GW theory}

A stable map is transverse to a divisor $D$ if the set 
$$
f^{-1}(D) = \{b_i\} 
$$
is finite and disjoint from nodes and marked points in
\eqref{Mgndata}.  Counting multiplicity, we have 
$$
f^* D = \sum \mu_i \, [b_i] \,, \quad \mu_i \in \N=\{ 1,2, \dots\} \,, 
$$
and it is convenient to add the points $\{b_i\}$ and the multiplicities
$\{\mu_i\}$ to the data in \eqref{Mgndata} to get a new moduli space
with morphisms
\begin{equation}
\Mbar_{g,n,\mu}(X/D,d) \owns f \mapsto f(b_i) \in D  \label{fb}
\end{equation}
that take a map $f$ to its $i$th point of tangency with $D$. Expanded
degenerations provide a natural compactification of this relative
moduli space.

As boundary conditions we pull back classes in $\Hd(D)$ via
\eqref{fb}. These are additionally colored by the integers $\mu_i$ and
this color may be recorded by a monomial $t^{\mu_i}$. This gives
\begin{align}
  \textup{relative bc}  &= \bSd \left(\Hd(D) \otimes t \Q[[t]]\right)
  \label{bcSym} \\
& = \Fock(\Hd(D)) \,, \notag 
\end{align}
where the symmetric algebra takes into account the sign rules, that
is, is the symmetric algebra in the category of $(\Z/2)$-graded vector
spaces. 

The parallel with the discussion of Section \ref{s_H_Fock} is not
coincidental. 

\subsubsection{Different dimensions}\label{sHodge} 

Since the GW counts are defined for $X$ of arbitrary dimension, it is
natural to ask how the counts are affected by simple changes of
geometry like going from $X$ to $X\times \A^1$.  

Despite the obvious
noncompactness, $\Ct$-equivariant GW counts in $X\times \A^1$ are
well-defined for the natural $\Ct$ action on $\A^1$. Denoting 
by $t$ the weight of this action, the difference $\delta$ in the obstruction theory
between maps to $X\times \A^1$ and maps to $X$ is 
\begin{multline}
\delta\left(\Def - \Obs \right) 
= \Hd(C, f^*(T\A^1)) = \\
=\Hd(C, t \otimes \cO_C) =
 t - t \otimes H^1(\cO_C) 
= t - t \otimes H^0(\omega_C)^\vee \,. 
\label{ObsHodge} 
\end{multline}
The rank $g$ bundle $H^0(\omega_C)$ is known as the  Hodge bundle over the
moduli spaces of curves. Its fiber over a smooth curve $C$ is formed
by the holomorphic differentials on $C$. Note that the rank in 
\eqref{ObsHodge} is $1-g$ in agreement with the virtual dimension
formula \eqref{virdimGW}. 

This difference in obstruction theories means the insertion of 
$$
t^{-1} \textup{Euler}(t \otimes H^0(\omega_C)^\vee) = 
\sum_k (-1)^k t^{g-k-1} c_k(\textup{Hodge}) 
$$
in the integral over $\Mbar_{g,n}(X)$. Integrals with Chern classes of
the Hodge bundle are known as the \emph{Hodge integrals}. 
We see that such insertions are equivalent to extra factors of $\A^1$
in the target of stable maps. In particular, the equivariant GW theory
of a linear space is the theory of Hodge of integrals over the 
the Deligne-Mumford moduli
spaces of stable curves. 

Note that Hodge integrals are suppressed in the
$t\to \infty$ limit. More
generally, the GW theory of $X^\bT$ may be recovered from GW theory of
$X$ if we send all equivariant variables to infinity. 

\subsubsection{Periodicity} 

If we are willing to increase dimension by 2 then there is a
cleaner relation that does not require taking limits in
$t$. The following vanishing of Chern classes 
\begin{equation}
\textup{Euler}\left(t \otimes \left(\textup{Hodge}\oplus \textup{Hodge}^\vee\right)\right) =
  t^{2g} \label{Mum_vanish} 
\end{equation}
was noticed by Mumford and for smooth curves may be explained by the
existence of the natural flat connection on the rank $2g$ bundle 
$$
H^1(C,\C) = H^1(\cO) \oplus H^0(\Omega^1_C) \,,
$$
see \cites{FP} for a modern discussion. It follows that 
\begin{equation}
\bZ_{\textup{GW}}(X \times \A^2_{(t,-t)} ; Q,u) = \bZ_{\textup{GW}}(X;
Q,i t u) \label{period} 
\end{equation}
where $t \in \Lie \Ct$ acts on $\A^2_{(t,-t)}$ by $
\begin{pmatrix}
t \\
&-t 
\end{pmatrix}
$.

\subsubsection{Critical dimension}

It is clear from \eqref{virdimGW} that the case
$$
\dim X = 3 
$$
is very special in GW theory. Virtual dimension grow/diminish with
genus for $\dim X \gtrless 3$ and are independent of the genus
 for
$\dim X =3$. Precisely for threefolds, one can have interesting
generating functions over all genera with the same insertions, in
particular, with the same boundary conditions. 

As discussed above, equivariant 
GW theory of threefolds also contains the GW
theory of targets of smaller dimension. 

\subsubsection{Compare and contrast}

There is a precise and very nontrivial comparison between the GW
theory in its critical dimension and the DT theory. This comparison
has been one of the guiding stars for the development of the DT theory
and will be discussed properly below. 

Since these lectures are about DT theory, naturally, we will be
stressing those aspects of the DT theory in which it compares
favorably to GW theory. These include, for instance, the ease of
``boxcounting'' localization computations, a feature much appreciated by 
practitioners and students alike. More fundamentally, DT theory is
much better suited for going beyond the computations in cohomology as
its partition functions may be computed not only in K-theory but, in
special circumstances, in even more refined enumerative theories. 

But in fairness to GW theory, one should not forget to stress its
great scope and flexibility. Not only is it defined for smooth
algebraic varieties of any dimension, it can also count
pseudoholomophic surfaces in real symplectic manifolds $M$, and those can
even have boundaries ending on Lagrangian subvarieties of
$M$. Already the counts of pseudoholomophic disks in such situations 
form a structure of stunning richness captured by Fukaya categories of
different flavors. These constrain the counts of holomorphic curves
and, one day, may help with the actual computations.

\subsection{Chern-Simons theory} 

\subsubsection{}

In Chern-Simons theory, the space-time is an oriented
 3-manifold $M$ and one
integrates over gauge-equivalence classes of connections $A$ on a 
$G$-bundle, where $G$ is a simple Lie group such as $G=SU(n)$. 
The integrand is
$\exp(2\pi i k \, \CS(A))$ where 
\begin{equation}
\CS(A) = \frac{1}{8 \pi^2} \int_M  \tr \left( A \wedge dA + 
\tfrac{2}{3}  \, A \wedge A \wedge A \right) \,.  \label{CS}
\end{equation}
The critical points of which are \emph{flat} connections: 
\begin{equation}
\nabla \CS = 0 \quad \Leftrightarrow \quad F_A = 0 \,. \label{critCS}
\end{equation}
More precisely, \eqref{CS} picks up an integer, the degree of the map 
$M \to G$ under a general gauge tranformation, whence the
quantization of the level $k\in \Z$, which is a very important feature of
the theory. This does not affect \eqref{critCS}, nor does it affect the fact
that flat connections make the dominant contribution to the integral
in the semiclassical $k\to \infty$ limit. 

We have 
\begin{equation}\label{flatpi_1} 
\left\{ \textup{flat $A$}\right\} \Big/ \textup{gauge} = 
\Hom(\pi_1(M) \to G)/ G \,, 
\end{equation}
where the $G$-action on the right is by conjugation. As a critical
locus, \eqref{flatpi_1} has expected dimension zero, which is also
confirmed by the deformation theory of flat connections. Correctly 
counting points of \eqref{flatpi_1} represents various
generalization of the Casson invariant, see \cites{Taubes} for a classical
treatment. 

\subsubsection{}
Donaldson-Thomas theory originated \cites{DonTh} from the following 
precise analog of
\eqref{CS} for complex Calabi-Yau threefolds $X$. Flat connections are
now replaced by $\debar$-operators 
$$
\debar_A = \debar + A \,,  \quad  
A \in \{\textup{(0,1)-forms}\} \otimes 
\End(\cE)\,, 
$$
that give the structure of a \emph{holomorphic} bundle to a
$C^\infty$-bundle $\cE$ over $X$, provided they satisfy
\begin{equation}
\debar_A^2 = 0 \,. \label{dbar2}
\end{equation}
The \emph{holomorphic} Chern-Simons functional 
\begin{equation}
\CS_\textup{holo}(A) = \int_M  \tr \left( A \wedge \debar A + 
\tfrac{2}{3}  \, A \wedge A \wedge A \right) \wedge \Omega^3 \,,  \label{hCS}
\end{equation}
where $\Omega^3$ is the holomorphic 3-form on $X$, the normalization
of which is of no importance here, similarly satisfies 
\begin{equation}
\nabla \CS_\textup{holo} = 0 \quad \Leftrightarrow \quad  \debar_A^2= 0 \,. \label{critCSh}
\end{equation}
As reflected in the title of \cites{ThCass}, the DT theory was seen
originally as an analog of the 
Casson invariant in algebraic or complex
geometry. As the scope of DT theory 
broadened, there are now both conceptual and technical advantages to
seeing Casson invariants as a particular instance of DT-like counts.

\subsubsection{}\label{s_CSknots} 

Natural observables in CS theory are Wilson lines, that is, holonomies
of $A$ along a curve $K\subset M$, which is a link in $M$, for
instance a knot, whence the notation. A very influential set of ideas 
that originated with Witten \cite{WittCS}, equates those with the
  counts of \emph{open} pseudoholomorphic surfaces 
$$
C \subset T^*M 
$$
ending on the conormal $L_K = T^\perp K \subset T^*M$ to the link $K$, which is a
Lagrangian submanifold of $T^*M$. In many cases, these should further
correspond to counts of complete $C$ in some associated algebraic
varieties, see \cites{GopVafa,OogVafa}. This line of inquiry was the principal
inspiration for the topological vertex conjecture of \cite{AKMV}, see 
Section \ref{s_top_vertex}, 
which in turn ignited a lot of research in DT theory.

\subsubsection{}
There is another very important inspiration that DT theory draws from
CS theory and it is about the role of \emph{representation
  theory} in geometry.  

The action \eqref{CS} does not involve a metric on $M$ which makes CS
theory topological. It can thus be understood in terms of a small
number of standard pieces and representation theory of affine Lie
algebras and/or quantum groups associated to $G$ is fundamental 
for describing those, see e.g.\ \cites{BakKir,Kohno}. 

In DT theory, one similarly desires to describe the standard
pieces of the theory  in terms of (geometric) representation theory, and we
will get a sense how this works in Section \ref{s_actual}, see 
\cites{PCMI,slc} for more.

\subsection{Nekrasov theory}\label{s_NT}

While Nekrasov's theory may be classified as an equivariant
version of Donaldson's invariants of smooth 4-manifolds, its
distinctive goals and technical tools had a very significant influence
on the development of the DT theory. 

\subsubsection{}
Let $A$ be a connection on a trivial rank $r>1$ bundle over
$M=\R^4$, which is the Euclidean version of the Minkowski space-time
of our everyday experience. If finite, the Yang-Mills energy 
$$
\YM(A) = \int_M \| F_\A\|^2 
$$
of $A$ is bounded below by a topological invariant 
\begin{equation}
\int_M \tr F\wedge F  = 8 \pi^2 c_2(F) \le \YM(A) \label{c2int}
\end{equation}
with an equality if $A$ is an \emph{instanton}, also known as an
anti-self-dual connection. The integral \eqref{c2int} agrees with 
the topologically defined Chern class $c_2$ for an extension of
 $A$ to $S^4 \supset \R^4$ as in \cite{Uhl}. 
Modulo gauge transformations that are trivial at $\infty \in S^4$,
instantons are parametrized by a smooth manifold $\cM$ of real 
dimension $4 r c_2$. The integrals over $\cM$ may be interpreted 
as (very!) approximate answers in Euclidean Yang-Mills theory or as
exact answers with enough supersymmetry and twists. This is the physical 
interpretation of Donaldson's theory of integration over a certain
compactification of $\cM$ for general $4$-manifold, see 
\cites{WitTQFT, WittenDon}.  

\subsubsection{}\label{s_torsion_free}

By a theorem of Donaldson \cite{DonGIT}, $\cM$ is also the moduli spaces 
of holomorphic bundles on $X=\C^2 \cong \R^4$ that are
trivial at infinity in the following sense. Pick an embedding into a
projective surface 
$$
X \subset \overline{X}, \quad \textup{e.g.} \quad \overline{X} = \bP^2
$$
and consider holomorphic bundles on $\overline{X}$ together with 
a trivialization 
$$
\phi: \cF|_{D} \xrightarrow{\,\sim\,} \cO_D^{\oplus r} \,, \quad 
D=\overline{X} \setminus X \,, 
$$
known as a \emph{framing} of $\cF$. 
The group $GL(r) \cong \Aut \cO_D^{\oplus r}$ acts on $\phi$, and this
is the same action as the action of constant gauge transformations on
instantons.

In one sentence, Nekrasov theory \cites{NekInst} may be described as 
$G$-equivariant
integration, in cohomology or K-theory,
 over a certain partial compactification 
$$
\cM \subset \overline{\cM}_r = \{ \textup{framed torsion-free sheaves
on $X$ of rank $r$}\}  \,. 
$$
Here 
\begin{equation}
G = \Aut(X) \times GL(r) \owns 
\begin{pmatrix}
  t_1 \\
& t_2 
\end{pmatrix} \times
\begin{pmatrix}
  a_1 \\
& \ddots \\
& & a_r 
\end{pmatrix} \,, 
 \label{GTA}
\end{equation}
where we have introduced an elelement in the maximal torus of $G$. 
In K-theory, Nekrasov counts are functions of $t$ and $a$, while in
cohomology they are functions on the corresponding Lie algebra. 

Very importantly, equivariant variables
are assigned a direct physical meaning in Nekrasov theory. The variables $a_i$ are coordinates on the
moduli space of vacua of the theory. The variables $t_i$, originally
used as cutoff variables to be turned off $t_i\to 1$ eventually
\cites{NOSW,NY1,NY2,NY3,OkICM,OkAMS}, were understood to be even more important as the theory was 
developed further, see \cites{Nikqq1, Nikqq2, Nikqq3, NikPes, NikPesSam}. 

\subsubsection{}
Equivariant localization is a heavy-duty all-purpose tool to do
equivariant counts in terms of the geometry of the fixed locus for 
a maximal torus of $G$. We will get some sense of how it works below,
but  see  e.g.\ \cites{CG} for an excellent 
introduction. 

One can first take the 
fixed locus with respect to the action of $a_i$ and get 
$$
{\overline{\cM}_r}^a = \left( \overline{\cM}_1 \right)^{\times r}
$$
with
\begin{equation}
\overline{\cM}_1  = \Hilb(X,\textup{points}) \supset \cM_1 = 
\{ \cO_X \} = \pt\,. \label{cM_1}
\end{equation}
It is amusing to notice that while instantons are a hallmark of 
nonabelian gauge theories, the integration over them is captured by 
point-like abelian defects as in \eqref{cM_1}. Similarly, I believe 
higher rank DT invariants of 3-folds 
contains substantially the same information
as curve counts, as already discussed in Section
\ref{s_DT_higher_rank}. 

\subsubsection{}\label{s_partitions}

Now 
$$
\Hilb(X,n) = \{ \textup{ideals $I \subset \C[x_1,x_2]$ of codimension
  $n$}\} 
$$
and $t=(t_1,t_2)$ acts on it by 
$$
f (x) \mapsto f(t^{-1} \cdot x) \,. 
$$
If $I$ is fixed then it has to be spanned by the eigenvectors
$x_1^{m_1} x_2^{m_2}$, whence 
\begin{align}
\Hilb(X,n)^{t} &= \{ \textup{monomial ideals $I \subset \C[x_1,x_2]$ of codimension
  $n$}\} \notag \\
&\cong \{ \textup{partitions of $n$}\}  \,, \label{partn} 
\end{align}
where the correspondence between monomial ideals and partitions is best
explained by a picture, see Figure \ref{f_part1}. 
\begin{figure}[!htbp]
  \centering
   \includegraphics[scale=1]{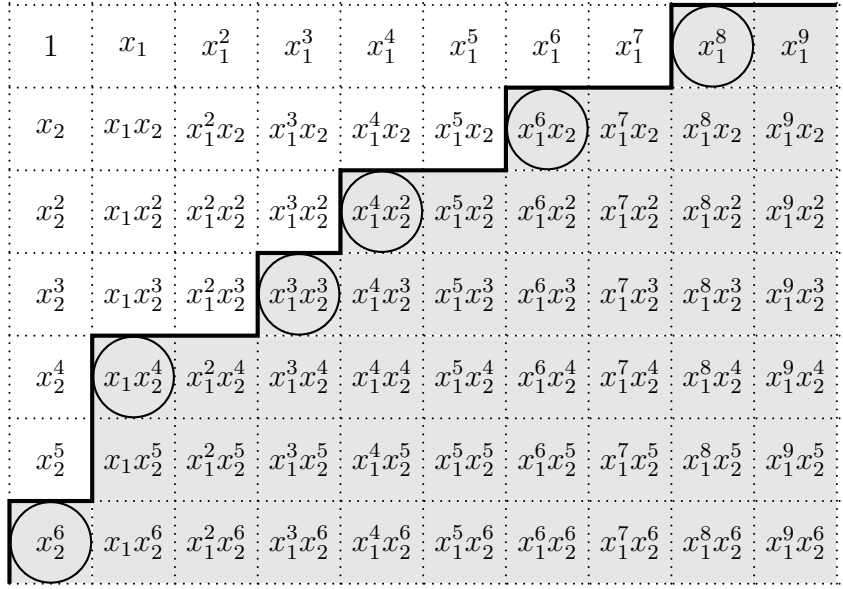}
 \caption{The ideal $I\subset\cO_X$ generated by monomials 
$x_1^8,x_1^6 x_2,\dots,x_2^6$ corresponds to the 
partition $\lambda=(8,6,4,3,1,1)$ of the number $n=23$. The squares in
the diagram of $\lambda$ correspond to a basis of $\cO_X/I$.}
  \label{f_part1}
\end{figure}

\subsubsection{}

For future reference, we point out that 
\begin{itemize}
\item[---] there is parallel match between $d$-dimensional partitions
  of $n$ and the fixed points $\Hilb(\A^d,n)^{\bT}$ of a maximal torus 
$\bT\subset GL(d)$, and 
\item[---] there is a very similar combinatorics for the torus fixed
  points of $\Hilb(X,\textup{curves})$ for a toric variety $X$, see 
Figure \ref{f_part2} and Section \ref{s_toric} below. 
\end{itemize}
\begin{figure}[!htbp]
  \centering
   \includegraphics[scale=1]{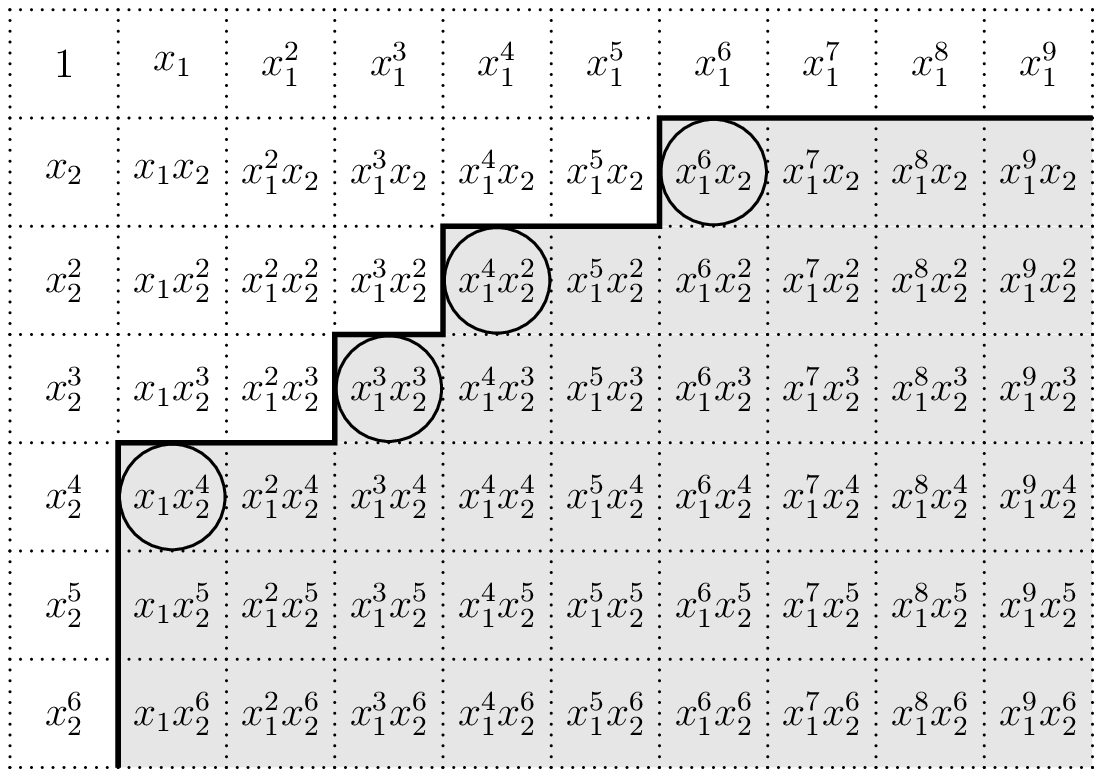}
 \caption{A monomial ideal $I\subset\cO_X$ like in Figure 
\ref{f_part1} need not be of finite 
codimension. This is important when we consider the Hilbert schemes of
curves. }
  \label{f_part2}
\end{figure}

\subsubsection{}

If $\cM$ is a smooth algebraic variety with the action of a torus
$\bT$ then localization formula for $\bT$-equivariant coherent sheaves
on $\cM$ reads
$$
\chi(\cM,\cF) = \chi(\cM^\bT , \cF\big|_{\cM^\bT}\otimes \bSd
N_{\cM/{\cM^\bT}}^\vee )  \in K_\bT(\pt) \left[\frac{1}{1-t^\nu}
\right] \subset \Q(\bT) 
$$
where $t^\nu$ are the weights of the normal bundle $N_{\cM/{\cM^\bT}}$
and $\bSd$ denotes the symmetric algebra. See e.g.\ Chapter 5 in 
\cites{CG} for an introduction. 

For example, if $p\in \cM^\bT $ is an
isolated point and $T_p \cM = \sum t^{\nu_i}$ as a $\bT$-module then 
we get the following contribution 
\begin{equation}
 \bSd
N_{\cM/p}^\vee = \prod \frac{1}{1-t^{-\nu_i}}  \label{bSdN}
\end{equation}
of the normal directions. The product \eqref{bSdN} is the character of
the $\bT$-action on
functions on the formal neighborhood of $p$ in $\cM$. 

\subsubsection{}\label{s_rtuple}

Here we are in the case when all fixed points are isolated and so 
$\chi(\cM_r^\bT)$ is a finite sum over $r$-tuples of partitions. To
compute the character of the tangent space to $\cM$ at the 
point 
\begin{equation}
\cF = I_{\lambda^{(1)}} \oplus \dots \oplus
I_{\lambda^{(r)}} \label{cFsum} 
\end{equation}
we use the modular interpretation of $\cM$. Very generally, tangent
spaces to moduli of sheaves are identified with $\Ext^{1}$ groups, and
concretely 
\begin{align}
T_{\cF} \cM & = \Ext^{1}_{\overline{X}}(\cF,\cF(-D)) \label{TExt1}\\ 
   & = \bigoplus_{i,j=1}^r a_j/a_i\, \otimes
     \Ext^{1}_{\overline{X}}
(I_{\lambda^{(i)}}, I_{\lambda^{(j)}}(-D)) \notag  \,. 
\end{align}
Note, in particular, that this is  is sesquilinear in the summands of
\eqref{cFsum}.

It follows that \eqref{bSdN} becomes a product of
pairwise
interactions between the partitions 
\begin{equation}
 \bSd
T_{\cF}^\vee \cM
= \prod_{i,j} \bEN(\lambda^{(i)},
\lambda^{(j)}, a_j/a_i) 
)^{-1} \label{interbE}
\end{equation}
where 
\begin{equation}
\bEN(\lambda,\mu,u) = \prod_{w \in \textup{ weights of } 
u \otimes \Ext^{1}_{\overline{X}}
(I_{\lambda}, I_{\mu}(-D))}
 (1-w^{-1}) \,. \label{bEu}
 \end{equation}
It is convenient to use the following combinatorial language to
compute the concrete form of this interaction \eqref{bEu}. 

\subsubsection{}\label{s_bG} 

For a diagram $\lambda$, we consider the following generating function 
\begin{equation}
\bG_\lambda = \chi_{\C^2} (\cO/I_\lambda) = \sum_{x_i^a x_j^b \notin
  I_\lambda} t_1^{-a} t_2^{-b} \,. \label{bV2}
\end{equation}
The terms in this sum correspond to the boxes $\square = (a+1,b+1)$ in
the diagram $\lambda$. We define the arm-length and the leg-length of 
a box $\square = (j,i)$ by 
$$
a_\lambda(\square) = \lambda_i - j \,, \quad l_\lambda(\square) =
\lambda'_j -i \,,
$$
where $\lambda'$ denotes the transposed diagram. Note that these numbers
will be negative for $\square\notin \lambda$. 

\begin{Lemma}\label{lemt1} 
The character in \eqref{bEu} is given  by 
\begin{align}
  \Ext^{1}_{\overline{X}}
(I_{\lambda}, I_{\mu}(-D)) & =  \bG_\mu + t_1 t_2 \, 
\overline{\bG}_\lambda - 
(1-t_1)(1-t_2) \, \bG_\mu \overline{\bG}_\lambda \,, 
\label{Ext1f1}\\ 
& = \sum_{\square\in\mu} t_1^{-a_\mu(\square)}
  t_2^{l_\lambda(\square)+1} + \sum_{\square\in\lambda} t_1^{a_\lambda(\square)+1}
  t_2^{-l_\mu(\square)} \,. \label{Ext1f2}
\end{align}
In particular, there are $|\lambda|+|\mu|$ factors in \eqref{bEu}. 
\end{Lemma}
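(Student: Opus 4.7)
The plan is to compute the virtual equivariant character $\chi_{\overline{X}}(I_\lambda, I_\mu(-D)) = \sum_i (-1)^i \Ext^i_{\overline{X}}(I_\lambda, I_\mu(-D))$ and then argue separately that $\Ext^0 = \Ext^2 = 0$, so that this virtual character is $-\Ext^1$ as a $T$-module. The vanishing of $\Hom(I_\lambda, I_\mu(-D))$ follows by Hartogs: such a map restricts to a section of $\cO_{\overline{X}}(-D) = \cO_{\bP^2}(-1)$ on the complement of a zero-dimensional set, which extends to a global section that must vanish. The vanishing of $\Ext^2$ then follows by equivariant Serre duality on $\bP^2$.

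To compute the Euler characteristic, I would expand via the short exact sequences $0 \to I_\nu \to \cO_{\overline{X}} \to \cO_\nu \to 0$ for $\nu = \lambda, \mu$ and bilinearity of $\chi$, producing four terms:
\begin{itemize}
\item[---] $\chi(\cO_{\overline{X}}, \cO_{\overline{X}}(-D)) = 0$ since $H^\bullet(\bP^2, \cO(-1)) = 0$,
\item[---] $\chi(\cO_{\overline{X}}, \cO_\mu(-D)) = \chi(\cO_\mu) = \bG_\mu$, as $\cO_\mu$ is zero-dimensional and supported away from $D$,
\item[---] $\chi(\cO_\lambda, \cO_{\overline{X}}(-D)) = t_1 t_2\, \overline{\bG}_\lambda$ by equivariant Serre duality, the factor $t_1 t_2$ being the weight of $K_{\bP^2}^\vee$ at the origin of $\C^2$ (so that $dx_1\wedge dx_2$ has weight $(t_1 t_2)^{-1}$),
\item[---] $\chi(\cO_\lambda, \cO_\mu) = (1-t_1)(1-t_2)\, \overline{\bG}_\lambda\, \bG_\mu$, from a Koszul-type free $\cO_{\C^2}$-resolution of $\cO_\lambda$ whose virtual character equals $(1-t_1^{-1})(1-t_2^{-1})\bG_\lambda$, paired against $\cO_\mu$ with dualization.
\end{itemize}
Combining with the appropriate signs from the exact sequences yields \eqref{Ext1f1} after applying $\chi = -\Ext^1$.

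For the box-by-box identity \eqref{Ext1f2}, I would substitute the explicit expansion $\bG_\nu = \sum_{(j,i) \in \nu} t_1^{-(j-1)} t_2^{-(i-1)}$ into the character formula. The key observation is that $(1-t_1)(1-t_2)\, \bG_\mu \overline{\bG}_\lambda$ telescopes along rows and columns of the two diagrams, and the leftover boundary terms organize themselves into one monomial per box of $\mu$ (contributing $t_1^{-a_\mu(\square)} t_2^{l_\lambda(\square)+1}$) plus one per box of $\lambda$ (contributing $t_1^{a_\lambda(\square)+1} t_2^{-l_\mu(\square)}$). The main technical obstacle is precisely this combinatorial rearrangement: when $\lambda \neq \mu$, a box of one diagram may lie outside the other, so arm or leg lengths become negative, and one must verify case-by-case that the resulting exponents are correct. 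The total count $|\lambda|+|\mu|$ of surviving monomials serves as an independent sanity check on the rank of $\Ext^1$.
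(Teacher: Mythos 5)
Your proof of \eqref{Ext1f1} is correct and follows essentially the same logic as the paper, which computes
$-\chi_{\overline{X}}(I_\lambda, I_\mu(-D)) = \chi_{\C^2}(\cO,\cO) - \chi_{\C^2}(I_\lambda, I_\mu)$
``by localization'' and then expands using the sesquilinearity of $\chi_{\C^2}$. Your four-term expansion via the short exact sequences $0\to I_\nu\to\cO\to\cO_\nu\to 0$ is precisely what justifies that passage: it is the observation that $\chi(\cO,\cO(-D))=0$ together with the fact that the remaining three terms are supported at the origin that lets one replace $\chi_{\overline{X}}$ by $\chi_{\C^2}$ up to the global term $\chi_{\C^2}(\cO,\cO)$. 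So the route is the same, just unpacked. Your identification of $\chi(\cO_\lambda,\cO_\mu)$ and of $\chi(\cO_\lambda,\cO(-D))$ matches what comes out of the paper's sesquilinear-form computation with normalization $\chi_{\C^2}(\cO,\cO)=\bigl((1-t_1^{-1})(1-t_2^{-1})\bigr)^{-1}$, using $(1-t_i)/(1-t_i^{-1})=-t_i$. You also usefully supply the Hartogs and Serre-duality arguments for the vanishing of $\Ext^0$ and $\Ext^2$, which the paper asserts without comment.

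Where your proposal falls short is \eqref{Ext1f2}. You correctly flag the passage from the closed form \eqref{Ext1f1} to the arm-and-leg expansion as the genuine combinatorial content, but the ``telescoping along rows and columns'' and ``case-by-case'' verification is left entirely as a sketch, and the negative-arm/leg cases (when a box of one diagram sits outside the other, or when $\lambda\neq\mu$ so the diagrams overlap irregularly) are exactly where a careless telescoping argument breaks. The paper does not prove this in-line either; it cites Lemma~3 of Carlsson--Okounkov, \emph{Exts and vertex operators}. If you want a self-contained proof, that reference (or Theorem 2.11 in Nakajima--Yoshioka's instanton counting lectures for the $\lambda=\mu$ case) spells out the rearrangement; your sanity check that the number of monomials equals $|\lambda|+|\mu|$ is a good consistency test but not a proof that the exponents are the stated ones.
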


\noindent 
Here $\overline{t^{d}} = t^{-d}$ denotes the usual duality for
representations and characters.

\subsubsection{}\label{s_Extchichi}

Since this is a very typical computation in the subject, 
we do it here explicitly. 

\begin{proof}
 To compute the character of this $\Ext^1$ we note that
\begin{alignat}{2} \notag 
\Ext^{1}_{\overline{X}}
(I_{\lambda}, I_{\mu}(-D)) &= - \chi_{\overline{X}} (I_{\lambda},
I_{\mu}(-D))\,,  \qquad \quad   &\textup{because $\Ext^0=\Ext^2=0$,}&\\ 
&= \chi_{\C^2} (\cO,\cO) - \chi_{\C^2} (I_{\lambda},
I_{\mu})\,,  &\textup{by localization.} \label{ext1chichi} &
\end{alignat}
Further note that $\chi_{\C^2} (\cF, \cG)$ is a sesquilinear form on 
$$
K_\bT(\C^2) \cong K_\bT(\pt)
$$
normalized by 
$$
\chi_{\C^2} (\cO,\cO) =
\frac{1}{(1-t_1^{-1})(1-t_2^{-1})} \,. 
$$
We have 
$$
[\cO/I_\lambda] = \frac{\chi_{\C^2}(\cO/I_\lambda)}{\chi_{\C^2}(\cO)} 
\, [\cO] = \bG_\lambda (1-t_1^{-1})(1-t_2^{-1})) [\cO] \in K_\bT(\C^2) 
$$
which gives \eqref{Ext1f1}. For \eqref{Ext1f2} see e.g.\ Lemma 3 in 
\cites{CarOk}. 
\end{proof}

\subsubsection{}

The arms-and-legs combinatorics of the interaction \eqref{bEu} is
typical in Nekrasov theory which extends to higher rank, that is, to
many partitions $\{\lambda^{(i)}\}$ the combinatorics of Macdonald
symmetric functions. 

The natural generality to study Nekrasov counts is when the 
gauge group has several factors (not unlike
what happens in the standard model), so that 
\begin{equation}
\Mbar = \prod \Mbar_{r_i} \supset \left\{ \textup{${\textstyle \prod}
    U(r_i)$-instantons on $\R^4$} \right\}  \,,
\label{product_gauge}
\end{equation}
and we define 
\begin{align}
\bZ_\textup{preliminary} &= \chi(\overline{\cM}, {\textstyle \prod} z_i^{c_2(\cF_i)} 
\Euler(\textup{matter}) )\label{matter} \\
& =\sum_{\textup{$r$-tuple of parititions}}  z^{\textup{\# of boxes}} \!\!\!\!\!\!\!\!
\prod_{ \substack{\eta,\nu\in \, \textup{$r$-tuple}
\\ \textup{interactions with mass $u_k$}}} \bEN(\eta,\nu,u_k)^{\pm 1}  \label{bEupm1}
\end{align}
where $r =\sum r_i$ is the total rank, 
$$
z^{\textup{\# of boxes}}  =  {\textstyle \prod} z_i^{c_2(\cF_i)}  = 
z_1^{\sum |\lambda^{(i)}|} z_2^{\sum |\mu^{(j)}|}  \cdots  \,. 
$$
The matter in \eqref{matter} is a bunch of
  fermions in representations of the form 
$(\C^{r_i})^* \otimes \C^{r_j}$ of the gauge group in 
\eqref{product_gauge} and mathematically described by the vector
bundle 
\begin{equation}
\xymatrix{
\Ext^{1}_{\overline{X}}
(\cF_i, \cF_j(-D)) \ar@{^{(}->}[rr] \ar[d] &&  
\Psi_{i,j}
\ar[d]\\
\{(\cF_i, \cF_j) \}\ar@{^{(}->}[rr] &&\Mbar_i \times \Mbar_j 
}
\label{matter_bundle}
\end{equation}
of rank $c_2(\cF_i) + c_2(\cF_j)$. This bundle is given an 
additional equivariant weight $m$ with respect to some bigger 
torus. This weight gives the mass to the fermion and
contributes
$$
\Euler(m\otimes \Psi_{ij}) = \sum_k (-m)^{-k} \Lambda^k \Psi_{ij}^\vee
$$
to the integrand in \eqref{matter} and $\bEN(\eta,\nu,m)$ to the
product in \eqref{bEupm1}. 

The terms with $\bEN(\lambda,\mu,\dots)$ in denominator in
\eqref{bEupm1} in come from
\eqref{interbE}. The simple mathematical 
fact that they appear in the denominator 
may be explained physically as the interaction of $\eta$ and $\nu$ via
a gauge \emph{boson} of the corresponding gauge group. The 
equivariant weight $a_j/a_i$ in \eqref{interbE} corresponds to the
mass of that gauge boson.

Note that the case of matter in the fundamental representation
$\C^{r_i}$, or its dual, for one of the gauge group factors is
contained in the previous discussion as a special case. To find it, take $\cF_i$ or 
$\cF_j$ trivial in \eqref{matter_bundle}. 

\subsubsection{}
The true Nekrasov partition function differs from \eqref{matter} in
two respects. First, there is a $z$-independent prefactor, interpreted
as perturbative contributions to the partition function. It is
obviously very important, but will be left out from this discussion. 

Second, the partition functions like \eqref{matter} come from susy
quantum mechanics, that is, indices of suitable Dirac operators, on the
moduli spaces $\Mbar$. On a K\"ahler manifold, this Dirac operator 
is the $\debar$ operator twisted by the square root $\cK^{1/2}_{\Mbar}$
of the canonical bundle.  This will be also a very important feature
of K-theoretic DT counts. This square root twist propagates in 
formulas by 
\begin{equation}
  \label{Z_Nekr}
 \bZ_\textup{Nekrasov} = \bZ_\textup{perturbative}\, \bZ_\textup{preliminary}
  \big|_{\bEN\mapsto \widehat{\bEN}}
\end{equation}
where 
\begin{equation}
\widehat{\bEN}(\lambda,\mu,u) = \prod_{\textup{same $w$}}
 (w^{1/2}-w^{-1/2}) \,. \label{bEh}
 \end{equation}

\subsubsection{}

The properties of \eqref{Z_Nekr} are very deep and rich, see e.g.\ 
\cites{Nikqq1, Nikqq2, Nikqq3, NikPes, NikPesSam}.  They place
a lower bound on the complexity and richness 
of DT counts as
Nekrasov counts can be found within DT counts, sometimes in a rather
nontrivial way, see e.g.\ Section \ref{s_engin} below.

The main conjecture of \cites{NekInst} was about the limit 
$t_1,t_2\to 1$ and its relation to the geometry of Seiberg-Witten
curves, see \cites{NOSW,NY1,NY2,NY3,OkICM,OkAMS}. This limit is
perhaps best understood from the 
3-dimensional perspective and the 3-dimensional boxcounting 
interpretation of Nekrasov functions. The whole Seiberg-Witten curve
can be clearly seen in the limit shape for boxcounting 
problem\footnote{Also, the variational principle for limit 
shapes is
simpler in the 3-dimensional setting, as it may be taken to describe
  a random surface with local interaction, see e.g.\ \cite{OkAMS}. This is easier than the
  variational problem for random partitions solved in \cite{NOSW}
  because local rules for surfaces generate nonlocal interactions for
  the partitions that appear as slices of a random surface.}.

Among other things, the functions \eqref{Z_Nekr} generalize and 
discretize many important integrals over $r$-tuples of
Hermitian matrices, in the same way as summations over partitions may
be seen as a discrete analog of a random matrix integral\footnote{
See e.g.\ \cites{OkUses} for a lengthy discussion of such comparisons.}. 

\subsubsection{}

The combinatorics surrounding Macdonald symmetric functions is full of
identities of the general form 
$$
\sum \prod \frac{1-t^{\mu}}{1-t^{\nu}} = \textup{another expression of 
the same breed} \,,
$$
where $t=(t_1,t_2,\dots)$, and, as a rule, one gets a lot of insight
and mileage out of interpreting such identities as statements about 
equivariant K-theory of $\Hilb(\C^2)$ and related spaces. In Nekrasov
theory one finds a multitude of higher rank generalization of such
identities. 

Perhaps as a meta principle one could propose
 the following: \textsl{every problem
involving partitions, and also Schur functions for some special values
of the parameters, is
really a problem in DT theory and is best approached as such}.

\section{The GW/DT correspondence} 

\subsection{Main features} 

\subsubsection{}

At this point, the reader will be hardly surprised to learn there is
a correspondence between DT and GW counts for 3-folds, but the exact
details of this match may be surprising. Most importantly,
there is no way to match the integrals over the individual moduli spaces in
DT and GW theories.

Indeed, even if we allow disconnected curves
(which can have negative genus), we have 
\begin{equation}
g \ll 0 , \,\, \textup{$d$ fixed} \quad \Rightarrow \quad
\Mbar_{g,n}(X,d)^\bullet = \varnothing \,, \label{GWempt}
\end{equation}
where bullet indicated that we allow disconnected domains $C$ as long
as $f$ is not constant on any of them. Similarly, 
\begin{equation}
\chi \ll 0 , \,\, \textup{$d$ fixed} \quad \Rightarrow \quad 
\Hilb(X,d,\chi) = \varnothing \,. \label{DTempt}
\end{equation}
If we try to match the discrete invariants by the usual formula
$$
\chi =1 - g\,,
$$
then we see a
serious disagreement between \eqref{GWempt} and 
\eqref{DTempt}. 

If fact, the correspondence will be an equality of generating
functions over $\chi$ and $g$. The two generating 
functions will identified not as formal power
series but as analytic functions 
after a certain change of variables. This means that, a
priori, to reconstruct one integral on one side infinitely many
integrals on the other side are needed. In reality, this is much more
effective because one of the functions is conjectured to be
 \emph{rational}. 

\subsubsection{} 
A nice geometric way to remember the degree of the curve and 
forget its genus is to consider the maps to the Chow variety of 
$1$-dimensional cycles in $X$
\begin{equation}
  \label{toChow}
  \xymatrix{ \PT(X) \ar[dr]_{\pi_{\PT\to\Chow}\quad } &&  
\Mbar(X)^\bullet  \ar[dl]^{\quad\pi_{\Mbar\to\Chow}} \\
& \Chow(X)} \,, 
\end{equation}
where $\PT(X)$ stands for the moduli space of stable pairs. One can
also 
put the Hilbert scheme of curves, or other DT moduli spaces in its
place, with only minor changes to the correspondence. 
The maps in \eqref{toChow} are proper once we fix
$\chi(\cF)$ or $g(C)$, respectively. This means that 
\begin{equation}
\bZ_{\PT\to\Chow} = \pi_{\PT\to\Chow,*}  
\left((-z)^{\chi(\cF)} \, \left[\PT\right]_\vir \right)
\in H_{2 \virdim} (\Chow(X),\Z) ((z))\label{ZPTChow}
\end{equation}
is a well-defined as a formal Laurent series. Here 
$$
\virdim = c_1(X) \cdot \textup{cycle} 
$$
 is a locally constant function on $\Chow(X)$. Similarly, we
define 
\begin{equation}
\bZ_{\Mbar\to\Chow} = \pi_{\Mbar\to\Chow,*}  
\left(u^{2g-2} \, \left[\Mbar^\bullet \right]_\vir \right)
\in H_{2 \virdim} (\Chow(X),\Q) ((u^2)) \,. \label{ZMbChow}
\end{equation}
The $\Q$-coefficients appear here because the virtual cycle of 
the orbifold $\Mbar^\bullet$ are only defined with rational
coefficients. 

The GW/DT correspondence, in its basic form, is the following 

\begin{Conjecture}[\cites{MNOP1,MNOP2}]\label{cr} 
 The series \eqref{ZPTChow} is an expansion of a rational function
in $z$ with poles at roots of unity. 
\end{Conjecture}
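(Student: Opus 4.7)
My plan is to prove this in three stages: reduce via the degeneration formula to toric threefolds, apply equivariant localization to reduce to the PT topological vertex, and finally prove the vertex itself is rational.

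First, using the gluing formula \eqref{Zglue} together with the Levine--Pandharipande algebraic cobordism result cited around \eqref{LP}--\eqref{prodP}, I would reduce to the case when $X$ is one of the toric pieces in \eqref{prodP}, possibly with relative divisors. The gluing formula writes $\bZ_{\PT\to\Chow}(X)$ as a Fock-space pairing of relative series $\bZ_{\PT\to\Chow}(X_i/D)$ with the weight $(-z)^{|\,\cdot\,|}$, which is diagonal with integer $z$-powers on each graded piece. Consequently rationality in $z$ is preserved by gluing, and it suffices to establish it for the three absolute targets in \eqref{prodP} and their relative versions along smooth divisors. Note that here I use $\PT$ rather than the full Hilbert scheme precisely because the denominator in \eqref{DTPT} kills the degree-zero Hilbert-scheme contribution whose Chow pushforward is supported on the empty cycle; the remaining $\PT$ moduli are proper over $\Chow(X)$ fiber-by-fiber.

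Next, for toric $X$ with $\bT = (\Ct)^3$, I would apply equivariant localization as previewed in Section \ref{s_toric}. The $\bT$-fixed loci in the PT moduli are classified combinatorially by assigning a 2D partition $\lambda_e$ to each toric edge $e\subset X$ (the asymptotic profile of $\cF$ along $e$) and a labeled box configuration at each toric vertex. The pushforward to $\Chow(X)$ then factorizes into a sum over the edge-partitions of products of local vertex factors $V_{\lambda,\mu,\nu}(z;t)$ and edge factors $E_\lambda(z;t)$, where $t=(t_1,t_2,t_3)$ are the $\bT$-weights. The edge factors are rational in $z$ by an explicit deformation computation on the cylinder $\lambda\times\bP^1$ (they are essentially finite products of $1-z^k t^\nu$-type factors). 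So everything reduces to rationality in $z$ of the equivariant PT vertex.

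For the vertex, I would express $V_{\lambda,\mu,\nu}(z;t)$ as a $\bT$-equivariant generating function of $\chi(\tO_\vir)$ over the PT moduli of stable pairs on $\C^3$ with prescribed asymptotics, and then invoke wall-crossing between PT and Hilbert-scheme stability on $\C^3$, analogous to the Calabi-Yau arguments of Toda cited in \cites{Toda1,Toda2}. The Hilbert-scheme vertex is, after dividing by the degree-zero contribution recalled in Section \ref{s_deg0}, expressible via the same arms-and-legs combinatorics as in Lemma \ref{lemt1} extended to three dimensions, and can be reorganized as a matrix element of explicit vertex operators on a Fock space attached to the three legs, cf.\ the identifications behind \eqref{Fock}. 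This representation-theoretic presentation makes rationality in $z$ manifest and confines the poles to roots of unity, coming from the normalization factors in $(1-z^k)$.

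The main obstacle is the final step: a priori the vertex is only a formal power series over an infinite family of fixed loci, and nothing in its definition forces a finite-order relation in $z$. The crux of the proof is therefore the geometric input that identifies the vertex with a manifestly rational quantity -- either via the capped-vertex/GW-DT comparison, in which the GW side has only finitely many contributions in each degree after the change of variables $-z=e^{iu}$, or via a direct wall-crossing identity on $\C^3$ that trades an infinite PT sum for a Hilbert-scheme ratio whose denominator is the MacMahon-type degree-zero series. Everything else -- the edge factors, the Fock pairings in the degeneration formula, and the algebraic cobordism induction -- preserves rationality routinely.
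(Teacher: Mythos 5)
The statement you are trying to prove is labeled as a \emph{Conjecture} in the paper, and it is not proven there. The paper explicitly notes that rationality is established only in special cases: for all toric threefolds via capped localization in \cites{MOOP} (the paper says ``The rationality \eqref{capped_vertex1}, which is the analog of the Conjecture \eqref{cr}, is proven in \cite{MOOP}''), and for complete intersections in products of projective spaces in \cites{PP5}. What the paper offers for the general case is only a \emph{heuristic analytic argument} via Carlson's theorem (assuming the GW/DT equality of analytic functions and convergence for $|z|<1$), which is explicitly not claimed to be a proof.

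Your Step 1 overreaches. The paper states clearly that ``\eqref{LP} by itself does \emph{not} reduce the DT theory to that of $\bP^3, \bP^2\times\bP^1, (\bP^1)^3$; it limits the number of standard pieces from which all other geometries may be assembled.'' The algebraic cobordism relation links a general $X$ to products of projective spaces by a chain of degenerations, but each degeneration step involves relative DT theories against smooth divisors of $X_1$, $X_2$, $\B$, and these relative theories are not automatically controlled: one needs the full machinery of capped descendent/relative correspondences as developed in \cites{PP1,PP2,PP3,PP4,PP5}, and even with this effort the result is obtained only for complete intersections in products of projective spaces, not for all projective threefolds. Moreover, the gluing pairing in \eqref{Zglue} is an infinite Fock-space sum; it does not automatically preserve rationality without a strong handle on how the degrees distribute, which is part of what makes the Pandharipande--Pixton argument hard.

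Your Step 3 (rationality of the equivariant vertex) also contains a gap. You invoke wall-crossing ``analogous to the Calabi-Yau arguments of Toda,'' but the paper is explicit that ``Wall-crossing techniques have not been really explored outside the world of Calabi-Yau varieties,'' and the vertex here is fully equivariant in $(t_1,t_2,t_3)$, not specialized to the CY slice. Similarly, the Schur-function/vertex-operator presentation of the vertex (the topological vertex of \cites{AKMV,ORV}) holds only in the CY specialization where $\aroof(T^\vir)=\pm 1$; the equivariant K-theoretic vertex is a far richer object, and the paper's route to its rationality goes through the $A_n$-geometries of Section \ref{s_Ancap} and the geometric representation theory of $\cU_\hbar(\glhh_n)$ in Section \ref{s_actual}, not through an ``explicit vertex operator'' formula. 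Your proof strategy, if completed, would recover the known toric case; it does not touch the general case, which remains open.
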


\begin{Conjecture}[\cites{MNOP1,MNOP2}]\label{c1} 
We have 
 \begin{equation}
   \label{GWDT}
   z^{-\frac{\virdim}{2}} \, \bZ_{\PT\to\Chow}  = (-iu)^{\virdim} \, \bZ_{\Mbar\to\Chow}
 \end{equation}
after the change of variables 
\begin{equation}
z = e^{iu}\,. \label{zeu}
\end{equation}
\end{Conjecture}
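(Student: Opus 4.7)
The plan is to prove \eqref{GWDT} by parallel degenerations on the two sides, followed by equivariant localization on toric pieces. On the GW side one first develops a relative theory via Jun Li's expanded degenerations and establishes a gluing formula analogous to \eqref{deform0}--\eqref{Zglue}, using the symmetric-algebra model \eqref{bcSym} of relative boundary conditions, which is identified with the Fock-space model \eqref{Fock} governing DT relative conditions. Next one checks that the normalization factors $z^{-\virdim/2}$ and $(-iu)^{\virdim}$ in \eqref{GWDT} behave additively under gluing via \eqref{chiglDT}, so that \eqref{GWDT} is multiplicative along a normal-crossing degeneration \eqref{degen}. By the Levine--Pandharipande algebraic cobordism relation \eqref{LP}, it then suffices to prove \eqref{GWDT} for the standard pieces \eqref{prodP} relative a toric boundary, together with the universal bubble $\B$ of \eqref{defB} carrying relative conditions at $D_0$ and $D_\infty$.

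\textbf{Equivariant vertex.} For a toric $X$ relative its toric boundary, equivariant localization with respect to $\bT\subset\Aut(X)$ decomposes both moduli spaces into contributions labelled by the combinatorics of the toric polytope: each edge carries a partition $\lambda$ recording the transverse profile of the curve along that torus-fixed line, and each trivalent vertex is assigned a local vertex term. On the DT side this is the three-leg DT vertex $V^{\DT}_{\lambda\mu\nu}(z)$, a generating series over three-dimensional partitions asymptotic to $\lambda,\mu,\nu$ on the three coordinate axes of $\A^3$, whose weights follow from the $\Ext$-calculus of Section \ref{s_Extchichi} applied to $\Hilb(\A^3)$. On the GW side one obtains a three-leg GW vertex $V^{\textup{GW}}_{\lambda\mu\nu}(u)$ built from Hodge integrals over $\Mbar_{g,n}$ as in \eqref{ObsHodge}. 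The edge contributions coming from tubular neighbourhoods of torus-fixed lines match on the two sides through the pairing \eqref{edge_bc}, so \eqref{GWDT} reduces to a purely local three-leg vertex identity between $V^{\DT}_{\lambda\mu\nu}$ and $V^{\textup{GW}}_{\lambda\mu\nu}$ under $z=e^{iu}$.

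\textbf{Main obstacle.} The main difficulty is this three-leg vertex identity, which must be proved in tandem with the rationality claim of Conjecture \ref{cr}, since the substitution \eqref{zeu} in \eqref{GWDT} is only meaningful once $\bZ_{\PT\to\Chow}$ has been recognised as a rational function. The approach is to reduce the number of nontrivial legs: using the edge pairing \eqref{edge_bc} together with further degenerations of $\A^3$ to thin slab geometries, one relates the three-leg vertex to two- and one-leg vertices. The one-leg DT vertex admits an explicit product formula that in particular implies rationality with poles at roots of unity; the one-leg GW vertex is accessible through the Mumford vanishing \eqref{Mum_vanish} and Virasoro-type constraints on $\Mbar_{g,n}$. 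The hardest step is matching these two closed forms term-by-term after the transcendental substitution $z=e^{iu}$; this identification of three-dimensional box-counting generating functions with Hodge integrals on $\Mbar_{g,n}$ is where the correspondence acquires its analytic rather than formal-power-series character, and I expect it to be the principal technical hurdle of the program.
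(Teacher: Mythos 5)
The statement you were asked to prove is labelled a \emph{Conjecture} in the paper and is attributed to \cites{MNOP1,MNOP2}; the paper supplies no proof, noting only that it is established for toric $X$ in \cites{MOOP} and, for numerical counts, for complete intersections in products of projective spaces in \cites{PP5}. A complete proof is therefore not expected, and your proposal is rightly framed as a program. That said, two steps of it misstate known obstacles.

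First, the cobordism reduction is overclaimed. You write that after Levine--Pandharipande \eqref{LP} it ``suffices to prove \eqref{GWDT} for the standard pieces \eqref{prodP},'' but the paper explicitly cautions that \eqref{LP} by itself does \emph{not} reduce the DT theory to that of \eqref{prodP}. The degeneration formula \eqref{Zglue} produces relative counts along arbitrary smooth divisors, not toric boundaries, so closing this loop already requires a systematic translation between relative and descendent boundary conditions on general pairs $(X,D)$ --- the bulk of the Pandharipande--Pixton program \cites{PP1,PP2,PP3,PP4,PP5} --- rather than an appeal to cobordism generation alone.

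Second, your reduction of the three-leg vertex to a ``one-leg DT vertex with an explicit product formula'' is not how the rationality of Conjecture \ref{cr} or the toric GW/DT match are obtained. In the fully equivariant setting the one-leg vertex has no closed product form. What \cites{MOOP} does is pass to \emph{capped} vertices and edges (Section \ref{s_Ancap}) and then exploit the geometries $X_n=A_{n-1}\times\A^1$: the sum \eqref{ZX2} over the intermediate partition $\eta$, together with the rank lemma for the matrix $\left(\bV(\varnothing,\mu,\eta')\right)$, determines the capped vertex with one leg empty, and $A_2\times\A^1$ then captures the full three-leg object. The GW input there is the local-curves theory of \cites{BrPand,OP5}, not Virasoro constraints on $\Mbar_{g,n}$. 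Your overall skeleton --- degeneration, matching of the Fock-space boundary conditions \eqref{bcSym} with \eqref{Fock}, localization to a local vertex identity, rationality as prerequisite --- is the right one, but it misses capping and the $A_n\times\A^1$ intermediaries, which are precisely what make the known partial proofs work.
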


\medskip 

\noindent 
Originally, the conjecture was stated for numerical Hilbert scheme counts, the formulation here makes use of later
improvements. Numerical counts are obtained from \eqref{GWDT}
by pairing with cohomology
classes that record incidence of cycles. 

\subsubsection{}

Conjecture \ref{c1} is proven for all toric varieties in \cites{MOOP} and,
for numerical counts, for complete intersections in the products of
projective spaces in \cites{PP5}. In fact, Pandharipande and Pixton prove
a finer correspondence that includes descendent insertions. Early 
discussion of such descendent correspondence may be found in \cites{MNOP2},
see also \cites{OOP}. Tracing the GW/DT correspondence through degenerations of
the form \eqref{degen} and equivalences between different kinds of
boundary conditions play a key role in \cites{PP1,PP2,PP3,PP4,PP5}.

\subsubsection{}\label{s_parity}

Note, in particular, that \eqref{GWDT} must have the same parity with respect to 
$$
(z,u) \mapsto (z^{-1},-u) 
$$
as the parity of the virtual dimension. This parity compensates the 
Galois action $i \mapsto -i$ on the identification \eqref{zeu} between 
series with rational coefficients. 

\subsubsection{}

The following is a heuristic analytic argument for the rationality of
\eqref{ZPTChow} assuming \eqref{GWDT} is an equality of analytic
functions on some common domain of analyticity. The series
\eqref{ZPTChow} is a series in $z$ with \emph{integer}
coefficients and, at least in specific instances, it is easy to see it
converges for $|z|<1$. 

A classical theorem of F.~Carlson \cites{FritzC,Remm} then implies 
\begin{itemize}
\item[---] either it is a rational function, or 
\item[---] the unit circle $|z|=1$ is a natural boundary for it. 
\end{itemize}
In the latter case, it cannot be meromorphic in any neighborhood 
of the points $z=1$, whence the conclusion. 




\subsection{Relative correspondence}

\subsubsection{} 

Correspondence of relative boundary conditions in GW and DT theories takes a remarkably
simple form. We will interpret relative partitions functions as vectors in
the Fock spaces \eqref{ZXD3} and \eqref{bcSym}, respectively. 
The monomial prefactors in \eqref{GWDT} may be absorbed in a change
of variables of the form 
$$
\bZ(Q z^{-c_1(X)/2},z) = z^{-\virdim/2} \bZ(Q,z)  
$$
and we will assume that this has already been done. 
To match the boundary conditions, we need a map 
\begin{equation}
\bSd \left(\Hd(D) \otimes t \Q[[t]]\right) \otimes \F 
\xrightarrow{\quad \sim \quad} \Hd (\Hilb(D)) \otimes \F
\,, \label{GWDTrel}
\end{equation}
where $\F$ denotes the functions of $u$ and $z$, and such 
a map is uniquely determined by where it sends the operators of
multiplication by $\gamma 
\otimes t^k$ for $\gamma\in \Hd(D)$ in the symmetric algebra. 

\begin{Conjecture}[\cites{MNOP2}]\label{c2} 
The relative correspondence map \eqref{GWDTrel} sends the 
multiplication operator by 
$u^{1-k} \gamma \otimes t^k$ to the Nakajima 
creation correspondence that adds a length $k$ subscheme supported on
the cycle Poincar\'e dual to $\gamma$. 
\end{Conjecture}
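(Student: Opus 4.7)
The overall plan is to reduce the identification of each generator to a single universal computation on a local ``cap'' geometry, and then to match the one-leg GW and DT vertices there by equivariant localization.

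First, by the gluing formula \eqref{Zglue} and its GW analog, together with the bubbling construction \eqref{blowoff} and the correspondence \eqref{Mystery}, verifying the conjecture for every generator $u^{1-k}\gamma \otimes t^k$ in the symmetric algebra reduces to matching the one-relative-insertion GW/DT partition functions on the bubble $\B=\bP(N_{X/D}\oplus\cO_D)$. The fiberwise $\Ct$-action on $\B$ splits the boundary as in \eqref{edge_bc}, so the image of the generator on the DT side is exactly the value of the one-leg DT partition function at $D_\infty$, viewed as a vector in $\Hd(\Hilb(D))$.

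Second, I would localize further with respect to a torus acting on $D$: taking $D=\A^2$ as an equivariant chart reduces the problem to the \emph{one-leg vertex} on $\A^2\times\bP^1$ relative to $\A^2\times\{\infty\}$. On the DT side, the $\bT$-fixed loci are two-dimensional monomial ideals of colength $k$ as in Section~\ref{s_partitions}, with contributions computable via \eqref{bSdN}. On the GW side, the fixed loci are degree-$k$ covers of $\bP^1$ with full ramification profile $(k)$ at $\infty$, whose integrand reduces by \eqref{ObsHodge} to Hodge integrals over $\Mbar_{g,1}$. After the substitution $z=e^{iu}$ these two generating functions match the same vector in $\Hd(\Hilb(\A^2))$: the DT answer is visibly obtained by applying a single Nakajima creation operator $\mathfrak{q}_k[\gamma]$ to the vacuum (its geometric content is precisely the addition of a length-$k$ subscheme supported on $\gamma$), while the GW answer is a resummation of double Hurwitz generating series whose Fock-space interpretation is also a single $\mathfrak{q}_k[\gamma]$.

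The prefactor $u^{1-k}$ appears as follows: the $u^{2g-2}$ exponent from \eqref{ZGW} together with the Riemann--Hurwitz shift from a single ramification point of multiplicity $k$ produces $u^{1-k}$ at the leading $g=0$ term, and the higher-genus contributions must resum under $z=e^{iu}$ into the rational DT answer anticipated by Conjecture~\ref{cr}.

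The main obstacle is exactly this explicit resummation of Hodge integrals on the one-leg cap. In the toric case it is handled by the combinatorial topological/DT vertex formalism of \cites{MOOP}; in general one bootstraps from the toric case via the degenerations \eqref{degen} and the Levine--Pandharipande reduction \cites{LP} of any projective threefold to the pieces \eqref{prodP}, as carried out in \cites{PP1,PP2,PP3,PP4,PP5}. The delicate structural point is that the resulting correspondence map, although defined a priori only on generators, must extend to an $\F$-linear isomorphism compatible with the gluing formula --- this is guaranteed by the degeneration argument but requires tracking the sign conventions of \eqref{chiglDT} and the grading weight $(-z)^{|\,\cdot\,|}$ in \eqref{Zglue} through the change of variables \eqref{zeu}.
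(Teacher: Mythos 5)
The statement you are asked to prove is explicitly a \emph{Conjecture} in the paper, attributed to \cites{MNOP2}, and the paper gives no proof of it. Immediately after the statement the author records only that a weak, \emph{capped} form was established for toric varieties in \cites{MOOP} and that stronger results appear in \cites{PP4,PP5}. There is therefore no ``paper's own proof'' against which to compare your write-up, and the write-up should not be presented as a proof.

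Your strategy is, in outline, the one that has produced the known partial results: reduce by gluing \eqref{Zglue} and bubbling \eqref{blowoff} to a local cap, localize to $\A^2\times\bP^1$, and compare fixed-locus contributions after $z=e^{iu}$. But as written the argument is circular at the decisive step. The sentence ``After the substitution $z=e^{iu}$ these two generating functions match\ldots'' is not a deduction; the matching of the one-leg GW and DT caps after this change of variables \emph{is} (a substantial piece of) the conjecture you set out to prove. Everything before that point only reduces the conjecture to a local model and does not establish the reduction's conclusion. Your own closing paragraph concedes this, calling the resummation of Hodge integrals on the one-leg cap ``the main obstacle'': that obstacle is the theorem, not a detail. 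Two further issues are worth naming concretely. First, the explanation of the $u^{1-k}$ prefactor by genus-$0$ Riemann--Hurwitz is not quite right (the full ramification profile $(k)$ over $\infty$ by itself does not fix the genus), and in any case this normalization has to be verified to be consistent with the degeneration formula on both sides, which is a computation, not a heuristic. Second, the proposed bootstrap via \cites{LP} and \eqref{degen} needs care to avoid circularity: the gluing formula \eqref{Zglue} transports \emph{relative} boundary conditions, so applying degeneration in the GW/DT-matching argument already uses the relative correspondence \eqref{GWDTrel} as an input. Disentangling this is precisely what makes \cites{PP4,PP5} nontrivial, and even there the conjecture is not established in full generality. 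So the proposal is a reasonable statement of the known strategy and an honest acknowledgment of where it gets hard, but it is not a proof and cannot be, since the statement remains open beyond the cases treated in the cited references.
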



\subsubsection{}
A weak form of Conjecture \ref{c2}, a correspondence between certain
\emph{capped} counts was established for toric varieties in \cites{MOOP}. 
Much stronger results were obtained in \cites{PP4,PP5}.

\subsection{Example: GW theory of curves} 

\subsubsection{}

Let $B$ be a smooth curve of some genus. A stable map 
$$
f: C \to B
$$
is a branched cover of $B$ on some components of $C$ and constant on
other components of $C$, see Figure \ref{f_Hurw}. 

 At the dawn of
representation theory, A.~Hurwitz sorted out the enumeration of
degree $d$ branched covers in terms of the characters of the symmetric
group $S(d)$, see e.g.\ \cites{Jones} for a survey. It is difficult to
improve on this classical treatment, except that there is a certain
combinatorial complexity to characters of $S(d)$ that kept generations
of researchers busy. 

The whole GW theory of $B$ is a certain mixture of Hurwitz theory with the
contributions of collapsed components of $C$ which have the form 
\begin{equation}
\lan \prod \psi_i^{m_i} \, \lambda_k 
\ran := \int_{\Mbar_{g,n}} c_k (\textup{Hodge}) \prod_{i=1}^n
c_1(T^*_{p_i})^{m_i} \,,
\label{psilam}
\end{equation}
where the integral is over the Deligne-Mumford moduli space of genus
$g$ stable curves $C$ with $n$ marked points $\{p_i\}$ and $T^*_{p_i}$
denotes the line bundle with fiber $T^*_{p_i} C$. 

\begin{figure}[!htbp]
  \centering
   \includegraphics[scale=1.4]{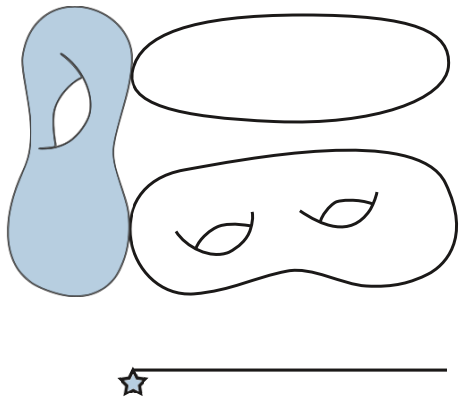}
 \caption{Some components of the source curve $C$ are branched covers
   of the target curve $B$, while other components of $C$ are
   collapsed by the map $f$.}
\label{f_Hurw}
\end{figure}

\subsubsection{}
The GW theory of curves was worked out explicitly in \cites{OP1,OP2,OP3} and it
turns out 
\begin{itemize}
\item[---] it is much simpler combinatorially than the Hurwitz theory, 
\item[---] both the $S(d)$-characters and the integrals \eqref{psilam}
  may be deduced from it, and this gives much better results than
  previously known. 
\end{itemize}
The answers in GW theory of B are some explicit sums over partitions
of $d$, which are, of course, unavoidable as long as $S(d)$ is
around. 

One may view these sums as finite discrete analogs of 
random matrix integrals that played a very important role in Witten's
pioneering thinking \cites{WittenDM} about 2-dimensional quantum gravity,
matrix models, and integrals \eqref{psilam} with $k=0$. See \cites{OkUses}
for more on random matrices versus 
random partitions. 

\subsubsection{}
These sums over partitions now find a completely transparent
interpretation via GW/DT correspondence if we take 
$$
X = B \times \A^2_{(t,-t)}
$$
and use the periodicity \eqref{period}. On the DT side, there is also an
analog of Mumford's vanishing: for opposite equivariant weights, 
the virtual class vanishes as soon 
as the map $\cO_X \to \cF$ is not surjective. The only moduli spaces to
consider are then 
$$
\PT(X, d [B]) \cong \Hilb( \A^2_{(t,-t)},d) \,, 
$$
where the isomorphism is induced by pullback under $X \to
\A^2_{(t,-t)}$.  

This reduces everything to the \emph{classical} geometry of $\Hilb(
\A^2)$, which yields sums over partitions by simple localization
like in Section \ref{s_partitions}. 

\subsubsection{}
The generalization of this geometry is the theory of \emph{local
  curves}, that is, threefolds that are total spaces
\begin{equation}
X = 
\begin{matrix}
\cL_1 \oplus \cL_2 \\
\downarrow\\
B
\end{matrix} \,,\label{local_curve}
\end{equation}
of two line bundles on $B$. There is a natural action of $(\Ct)^2$ in
the fibers of $X$ and the equivariant variables 
\begin{equation}
(t_1,t_2) \in \Lie (\Ct)^2 \label{Ct1t2} 
\end{equation}
are important parameters in the theory. 

The GW and the DT sides of the story were worked out in \cites{BrPand} and 
\cites{OP5}, respectively. They now related to the \emph{quantum} cohomology of 
$\Hilb(\A^2_{t_1,t_2})$, computed in \cites{OP4}. This theory now
strictly generalizes the corresponding 
classical story of Jack
polynomials etc. We will have another look at it from an even higher
perspective in Section \ref{s_actual}.

\section{Membranes and sheaves}\label{s_M} 

\subsection{Outline} 

\subsubsection{} 

M-theory is an ambitious vision that ties together many threads of the
modern high-energy physics in a unique 11-dimensional supergravity
theory. Instead of point particles or strings, M-theory contains membranes (M2-branes) with 3-dimensional
worldvolume, see e.g.\ \cites{MMM} for a survey of their properties. 

In a space-time of the form $Z \times S^1$, where $Z$ is a complex
Calabi-Yau 5-fold, there will be supersymmetric M2 branes of the form 
$C \times S^1$ where $C\subset Z$ is a holomorphic curve. The
contribution of those is expected to be an enumerative theory
superficially resembling other curve counting theories. 

A guess for what this theory might look like and how it should be
related to DT theories of 3-folds is the main theme of \cites{MDT}. Among
other things, the conjectures of \cites{MDT} provide a natural description of the rational
function in Conjecture \ref{cr}. They do so by identifying a
generalization of the series 
\eqref{ZPTChow} with a certain equivariant K-theoretic count of M2-branes, in
which the variable $z$ is viewed as acting on $Z$ via 
$$
z \in \Ct_z \subset \Aut(Z,\Omega^5_Z) \,. 
$$
In English, we require that $z$ preserves the Calabi-Yau 5-form
$\Omega^5_Z$. By
localization, such counts are always rational functions with
controlled
poles.

To be able to interpret a multiplicative variable 
$z\in \Ct$ as an equivariant parameter, we
must work in K-theory and the corresponding M2-counts will be 
matched with the K-theoretic analogs
of \eqref{ZPTChow}. Conveniently, the suitable K-theoretic extension of DT 
counts works out very nicely. 

\subsubsection{}

The main difference between counting M2-branes and what we have seen
before is that M-theory lacks a parameter that could keep track of the
genus of $C$. This makes sense if we want a correspondence with DT
counts that reassigns the genus-counting variable $z$. 

M-theory has a 3-form field that generalizes 
connections of gauge theories and determines, via integration over the
worldvolume, the action of an M2-brane. For curve counting, this
produces variables $Q$ that keep track of the degree of $C$. But there
is nothing in M-theory that could similarly couple to the genus of
$C$, or the Betti numbers of the worldvolume $C \times S^1$, hence we
must sum over the genera with no extra weight. 

The only possible solution conclusion is that these sums must be
finite and therefore we require the map 
$$
\MM(Z) \to \Chow(Z) 
$$
to be \emph{proper}, where $\MM(Z)$ is our hypothetical moduli space
of supersymmetric M2-branes. This is very different in flavor from
either GW or DT counts and is achieved in \cites{MDT} by imposing a
certain stability condition. 

\subsubsection{}

A connection with DT counts appears when the fixed locus 
$$
X = Z^{\Ct_z}
$$
has pure dimension $3$. For simplicity, we will assume here that $X$
is connected, otherwise DT theories of different components of $X$
will be talking to each other like the different partitions were
talking to each other in Section \ref{s_rtuple}. The details of that
interaction can be found in Section 3 in \cites{MDT}, but here we skip
them.

Here we can take
\begin{equation}
Z = 
\begin{matrix}
\cL_4 \oplus \cL_5 \\
\downarrow\\
X
\end{matrix} \,,\label{localX}
\end{equation}
where $X$ is an arbitrary smooth quasiprojective $3$-fold, 
$$
\cL_4 \otimes \cL_5 = \cK_X\,, 
$$
and $\Ct_z$ acts with weights $(z,z^{-1})$ in the fibers of
\eqref{localX}. 

The correspondence between DT and M2 counts will be expressed as an 
equality of two elements of $K(\Chow(X)) (\!(z)\!)$,
in parallel to the language of Conjecture \ref{c1}. 

\subsubsection{}

On the DT side, we consider the following analog of \eqref{ZPTChow}
\begin{equation}
\bZ_{K,\PT\to\Chow} = \pi_{\PT\to\Chow,*}  \,
\tO_{\PT} 
\in K(\Chow(X)) (\!(z)\!)\label{ZKPTChow}
\end{equation}
where
\begin{equation}
  \label{deftODT}
  \tO_{\PT} = \textup{prefactor}  \, \, 
 \cO_\vir \otimes \left( \cK_\vir \otimes \det\Hd(\cF\otimes(\cL_4-\cL_5)) \right)^{1/2}
\end{equation}
with 
\begin{align}
   \cO_\vir &= \textup{the virtual structure sheaf the moduli space $\PT(X)$}\,,
\notag  \\
 \cK_\vir & = \textup{its virtual canonical line bundle} \,, \notag \\ 
\det\Hd(\cF\otimes \dots ) &= \textup{a tautological line bundle on
                             $\PT(X)$}\,, \notag \\
\textup{prefactor}  &= (-1)^{(\cL_4,d)+\chi(\cF)} \,
                      z^{-\frac{\textup{vir dim}}2+\chi(\cF)}
                      \,.  \label{prefactor} 
\end{align}
Here the virtual structure sheaf and the virtual canonical line bundle
come out of the general machine of perfect obstruction theories, see
an example below. 
The fact that a certain line bundle has a square 
root\footnote{A more precise statement is that it is a square modulo a
certain fixed line bundle pulled back from $\Chow(X)$, which will also
appear on the membrane side.} uses something specific
about DT moduli spaces. The existence of the required square root is
shown in \cites{MDT}. The Euler characteristics that we compute here
do not depend on the choice of the square root. 

Note that, aside from $ (-1)^{(\cL_4,d)}$, the prefactor is the same
monomial $\pm z^{\dots}$ that appears in the correspondence
\eqref{GWDT}.  The expression \eqref{deftODT} is a slight modification
of the 
$$
\textup{virtual $\hat A$-genus} = \cO_\vir \otimes \cK_\vir^{1/2}
$$
that contains the right dependence on the normal bundle $N_{Z/X}$ and
has
a
well-defined square root. 

Before discussing the membrane side of the story, it is very
instructive to consider an example. 

\subsection{Smooth curves} 

\subsubsection{} 

Let $X$ be of the form \eqref{local_curve}, which means that 
\begin{equation}
Z = 
\begin{matrix}
\cL_1 \oplus \cL_2 \oplus \cL_4 \oplus \cL_5 \\
\downarrow\\
B
\end{matrix} \,,\label{local_curveM}
\end{equation}
and consider curves of minimal degree
$$
d= [B] \in H_2(Z,\Z) \,,
$$ 
where $[B]$ is the class of a section in 
\eqref{local_curveM}. The corresponding component of the 
Chow variety is simply a 
linear space 
$$
\Chow(X,d) = H^0(B,N_{X/B})\,, \quad N_{X/B}=\cL_1 \oplus \cL_2 \,, 
$$
and by the analysis of \eqref{XsF} 
the fibers of $\pi_{\PT\to\Chow}$ are symmetric powers of the 
underlying curves 
\begin{equation} 
\label{pbSdB}
\xymatrix{
  \bSd\ar[d] B 
\ar@{^{(}->}[r]
& \PT(X,d) \ar[d]^{\pi_{\PT\to\Chow}} \\
[B]
\ar@{^{(}->}[r] 
& \Chow(X,d) \,. 
}
\end{equation}
Without loss of generality,  it suffices
to consider the fiber over the curve $B$ itself in \eqref{pbSdB}. 
In any event, all counts can be reduced to this fiber by 
localization with respect to the torus \eqref{Ct1t2},

\subsubsection{} 

First, let us discuss the integrand in \eqref{ZKPTChow} in the special 
case 
\begin{equation}
\cL_1 \otimes \cL_2 = \cK_B\,, \quad \cL_4 = \cL_5 = \cO_B \,. 
\label{assume} 
\end{equation}
In this case, the 3-fold $X$ is already Calabi-Yau and the obstruction
theory of the PT moduli spaces is self-dual. 

The deformation-obstruction theory can be divided into two pieces. 
One, which we call horizontal, described the deformations 
$$
\Def_{\textup{---}} = H^0(B, N_{B/X}) \,, \quad 
\Obs_{\textup{---}}  = H^1(B, N_{B/X}) = \Def_{\textup{---}}^\vee 
$$
of the curve itself and is pulled back from the Chow variety. The vertical piece 
$$
\Def_{|}= \textup{the tangent bundle $T \bSd B$} \,, \quad \Obs_{|} = 
\Def_{|}^\vee \,, 
$$
describes the deformations in the fibers of \eqref{pbSdB}.

\subsubsection{}

Whenever
the obstruction theory is given by a vector bundle $\cE$ on a smooth
variety $M$,
the virtual structure sheaf is cut out by a section $s$ of $\cE$. 
By the Koszul resolution, we have 
$$
\cO_\vir = \bigoplus (-1)^k \Lambda^k \cE^\vee = \bSd ( - \cE^\vee)
\,, 
$$ 
with the same sign conventions for the symmetric algebra as in
\eqref{bcSym}. Also, 
$$
\cK_\vir = \cK_M \otimes \det \cE 
$$
is the determinant of $\Obs-\Def = \cE - TM$. 

It is convenient to define the symmetrized symmetric
algebra by 
\begin{equation}
\bSdh V = (\det V)^{1/2} \, \bSd V = (-1)^{\rk V} \,\, \bSdh V^\vee
\,, \label{bSdhsymm} 
\end{equation}
where the last equality holds if $V$ is odd or in localized
equivariant K-theory. With this notation 
$$
\cO_\vir \otimes \cK_\vir^{1/2} = \cK_M^{1/2} \otimes \bSdh ( -
\cE^\vee) \,. 
$$
The localization of this to a point of $M$ equals 
$$
\bSdh(T^*M- \cE^\vee) = (-1)^{\virdim}\,\,  \bSdh (TM - \cE) \,. 
$$

\subsubsection{}
We now specialize this to the vertical part of the obstruction theory
with 
$$
M = \bS^n B \,, \quad \cE = T^* M \,. 
$$
With the $\pm z^{\dots}$ prefactor in \eqref{prefactor}, we get 
$$
\tO_{\PT,|} \, \Big|_{\bS^n B} = (-z)^{1-g(B)} z^n \sum_p (-1)^p \, 
\Omega^p \, 
\bS^n B  \,. 
$$
The pushforward of this is a combination of Hodge structures of  
$\bSd B$, namely 
\begin{align}
\pi_{\PT\to \Chow,*} \tO_{\PT,|}  &=  (-z)^{1-g(B)} \sum_{n,p,q} z^n
(-1)^{p+q} H^{p,q}(\bS^n B)  \,, \notag \\
&= (-z)^{1-g(B)} \,  \bSd \left(z \sum_{0\le p,q\le 1}
(-1)^{p+q} H^{p,q}(B)   \right)
\label{Hpq} 
\end{align}
where 
$$
H^{p,q}(Y) = H^q(\Omega^p Y)\,.
$$
The equality in \eqref{Hpq} is a classical result that goes
back to Macdonald \cites{MacdS} and says that the Hodge structures of $\bSd B$
are canonically the symmetric algebra of the Hodge structure of $B$
itself.

\subsubsection{}

The argument of $\bSd$ in \eqref{Hpq} can be written as follows 
$$
z (-1)^{p,q} H^{\bullet,\bullet}(B) = \Hd(B,z \cL_4) + \Hd(B, z^{-1}
\cL_5)^\vee 
$$
and so from \eqref{bSdhsymm} we conclude 
\begin{align}
 \pi_{\PT\to \Chow,*} \tO_{\PT,|}   &= \bSdh \Hd(B, z \cL_4 \oplus
  z^{-1} \cL_5)^\vee \notag \\
& = \bSdh \Hd(B, N_{Z/X})^\vee \,, \label{pptsum} 
\end{align}
which is a very remarkable conclusion. 

In English, it says that the integration along the fibers of
\eqref{pbSdB} can be replaced by just allowing the curve $B$ to move
in the 4th and 5th dimensions ! Indeed \eqref{pptsum} is identical to 
$$
\textup{localization of }  \tO_{\PT,\textup{---}} = 
\bSdh \Hd(B, N_{X/B})^\vee
$$
and the combination of the horizontal and vertical part treats all
normal directions to $B$ in $Z$ equally. 

There is a simple 

\begin{Theorem}[\cites{PCMI}]
Formula \eqref{pptsum} is valid without the assumptions
\eqref{assume}. 
\end{Theorem}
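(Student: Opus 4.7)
The plan is to extend the fiberwise computation producing \eqref{pptsum} under the hypotheses \eqref{assume} by carefully tracking how each factor in $\tO_{\PT,|}$ depends on the four line bundles $\cL_1, \cL_2, \cL_4, \cL_5$. As before, I work on a fixed fiber $\bS^n B \hookrightarrow \PT(X,d)$: both sides of \eqref{pptsum} are determined fiberwise by localization along \eqref{pbSdB}. Over $\bS^n B \times B$ the universal pure sheaf is $\cF = \cO(\cD)$ for $\cD$ the universal divisor, so the tensor products $\cF \otimes \cL_i$ and their direct images under projection to $\bS^n B$ are accessible via the exact sequence $0 \to \cL_i \to \cL_i(\cD) \to \cO_\cD \otimes \cL_i(\cD) \to 0$ together with Grothendieck--Riemann--Roch.

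I would proceed in two steps. First, keep $X$ Calabi--Yau ($\cL_1 \otimes \cL_2 = \cK_B$) but allow arbitrary $\cL_4, \cL_5$ with $\cL_4 \otimes \cL_5 = \cO_B$. The moduli space and its vertical obstruction theory are unchanged, so $\cO_\vir \otimes \cK_\vir^{1/2}|_{\bS^n B}$ still yields the alternating sum of Hodge pieces appearing on the right of \eqref{Hpq}. The tautological determinant $\det \Hd(\cF \otimes (\cL_4 - \cL_5))|_{\bS^n B}$ is computed by the above universal-divisor sequence and contributes exactly the twist needed to upgrade $z \oplus z^{-1}$ in \eqref{pptsum} to $z\cL_4 \oplus z^{-1}\cL_5$; Macdonald's formula then rearranges the result into the claimed $\bSdh$. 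Second, drop the Calabi--Yau assumption on $X$. The vertical obstruction theory on $\bS^n B$ now gains an extra twist by $\cK_X|_B = \cK_B \otimes (\cL_1 \cL_2)^{-1}$, which by the Calabi--Yau condition on $Z$ is precisely $\cL_4 \otimes \cL_5$. This new twist of $\cO_\vir$ and $\cK_\vir$ is absorbed by $\det \Hd(\cF \otimes (\cL_4 - \cL_5))$ and by the $z$-monomial and sign in the prefactor \eqref{prefactor}, leaving the same symmetrized Hodge expression.

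The main obstacle is verifying that the square root of $\cK_\vir \otimes \det \Hd(\cF(\cL_4 - \cL_5))$ guaranteed to exist by the general result in \cites{MDT} coincides with the natural square root that arises from the assembled fiberwise computation once the universal-divisor sequence is used. The sign $(-1)^{(\cL_4, d) + \chi(\cF)}$ in \eqref{prefactor} is essential here and is dictated by the chosen branch of the square root; it must be tracked through the Hodge-to-symmetric-algebra rearrangement. Once this bookkeeping of square roots is in place, the remainder of the argument reduces to the same application of Macdonald's theorem together with the identity $\bSdh V = (-1)^{\rk V}\, \bSdh V^\vee$ that made the Calabi--Yau case transparent.
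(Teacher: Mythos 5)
Your proposal matches the approach the paper indicates: the paper's own justification is the single remark that "its proof is a natural modification of Macdonald's result for twisted cotangent bundles of $\bS^n B$ that appear in the general case," and this is exactly the content of your Step 2, where you identify that dropping the Calabi--Yau hypothesis on $X$ twists the vertical obstruction theory by $\cK_X|_B = \cK_B \otimes (\cL_1\cL_2)^{-1} = \cL_4 \otimes \cL_5$, turning the self-dual cotangent complex into a twisted cotangent bundle. Your two-step factorization (first allow $\cL_4 \ne \cL_5$ with $X$ still Calabi--Yau, feeding the twist through $\det\Hd(\cF\otimes(\cL_4-\cL_5))$ via the universal-divisor sequence, then relax the Calabi--Yau condition), together with the flagged bookkeeping of the square root and the prefactor sign, is a reasonable and essentially equivalent organization of the same argument; the one thing you leave implicit is the precise form of the "twisted Macdonald" identity for $\bigoplus_n z^n \sum_p (-1)^p \Omega^p(\bS^n B) \otimes (\mathrm{twist})$, which is the lemma the paper is gesturing at and is worth stating explicitly when writing this out.
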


\noindent
Its proof is a natural modification of Macdonald's result for twisted
cotangent bundles of $\bS^n B$ that appear in the general case. 
So, the conclusion for any smooth curve $B$ is that its PT theory is
summed up by allowing it to move in the extra dimension of M-theory.

\subsection{General curves}

\subsubsection{} 
Still in the setting of \eqref{localX}, our goal now is to discuss a
general conjectural formula for \eqref{ZKPTChow} in terms of the 
the symmetrized 
virtual structure sheaf 
$$
\tO_{\MM} = 
 \cO_\vir \otimes \cK_\vir^{1/2} 
$$
of the membrane moduli spaces. This formula should generalize 
\eqref{pptsum} and therefore should involve $\Ct_z$-localization. 

\subsubsection{} 

Consider the diagram of proper maps 
\begin{equation}
\xymatrix{
\MM(Z)^{\Ct_z} \ar[d] \ar[r]^\iota & \MM(Z) \ar[d] \\
\Chow(X) \ar[r] & \Chow(Z) \,. 
}
\end{equation}
By localization, the map $\iota_*$ on equivariant K-theories is an 
isomorphism after inverting $1-t z^n$, where $t$ is weight of
$\Aut X$ and $n\ne 0$. Thus, we can define 
\begin{equation}
\bZ_{K,\MM\to\Chow} = \pi_{\MM\to\Chow,*}  \, (\iota_*)^{-1} \,
\tO_{\MM} 
\in K_{\Aut(X)}(\Chow(X)) \left[z^{\pm 1},\frac{1}{1-t z^n} 
\right] \,. 
\label{ZKMMChow}
\end{equation}

\subsubsection{} 
By design, the membrane moduli space parametrize \emph{connected}
membranes and thus there is a certain exponentiation to go from the
membrane counts to the DT counts. 

The Chow variety is an algebraic semigroup, where the addition maps 
$$
(+)_n : \Chow(X)^n \to \Chow(X) 
$$
are given by the addition of cycles. The zero cycle 
$$
\{\varnothing\} = \Chow(X,0) 
$$
is the identity for this operation. For $\cF \in K(\Chow)$ 
such that $\cF\big|_{\Chow(0)}=0$ we define 
$$
\bS_{\Chow} \cF = \bigoplus_{n=0}^\infty \left( (+)_{n,*} \cF^{\boxtimes n}
\right)^{S(n)} \,. 
$$
If we integrate over $\Chow(X)$, this becomes the usual exponential,
that is, 
$$
\chi\!\left(Q^d \, \bS_{\Chow} \cF \right) = \exp\!
\left( \chi\!\left(Q^d \, \cF \right) \right) \,. 
$$

\subsubsection{}

The following is a special case of the main conjecture in \cites{MDT}

\begin{Conjecture}[\cites{MDT}] \label{cM} We have 
$$
\bZ_{K,\PT\to\Chow}  = \bS_{\Chow }\, \bZ_{K,\MM\to\Chow}  \,. 
$$
\end{Conjecture}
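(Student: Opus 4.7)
The plan is to combine the gluing and degeneration toolkit of Section~\ref{s_boundary} with $\Ct_z$-equivariant localization on the membrane side, reducing the global statement to local identities indexed by strata of $\Chow(X)$. The exponential structure $\bS_{\Chow}$ is natural on the PT side because the stability condition imposes no connectedness on $\supp \cF$, so the fiber of $\pi_{\PT\to\Chow}$ over a disjoint union of cycles factors as a product of the fibers over the pieces. Correspondingly, M2-branes are required to be connected, and $\bS_{\Chow}$ is the plethystic exponential that passes from connected to arbitrary configurations. Concretely, I would first check that both sides of Conjecture~\ref{cM} are multiplicative under disjoint union of cycles in a compatible way, and then take $\log_{\bS_\Chow}$ of both sides to reduce the conjecture to an equality over the locus of connected, irreducible cycles $[C] \in \Chow(X)$.

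Next I would attack the connected case by $\Ct_z$-localization. On the right-hand side, since $Z^{\Ct_z} = X$ and $\Ct_z$ acts with weights $(z, z^{-1})$ on $\cL_4 \oplus \cL_5$, one has $(\iota_*)^{-1} \tO_\MM$ expressed as a sum over connected components of $\MM(Z)^{\Ct_z}$, with normal-bundle contributions built from $N_{Z/X}$ twisted by characters of $\Ct_z$. The strategy is to show that the fixed locus of $\MM(Z)$ together with its virtual structure is identified, up to the canonical square-root twist \eqref{bSdhsymm}, with the PT moduli over the same connected cycle, once the vertical deformations along the fiber of $\pi_{\PT\to\Chow}$ are reinterpreted as motion of the curve in the $\cL_4\oplus\cL_5$ directions. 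This is precisely what the smooth-curve theorem accomplishes in the minimal-degree case, and the broader assertion is that the identification persists fiberwise.

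To upgrade from smooth curves to arbitrary connected cycles, I would proceed by degeneration along a smooth divisor $D\subset X$, using the gluing formula of Section~\ref{s_gluingDT} together with the corresponding gluing for M2-counts from \cite{MDT}, reducing to toric $X$ by the Levine--Pandharipande relation \eqref{LP}. On a toric threefold, both sides become sums of vertex and edge contributions; the edge pieces amount to smooth $\bP^1$'s in a local $\cL_1\oplus\cL_2\oplus\cL_4\oplus\cL_5$ geometry and are handled by a direct generalization of \eqref{pptsum}, while the vertex pieces reduce to equivariant K-theoretic calculations on $\Hilb(\A^3,\points)$ enhanced by the two extra Calabi--Yau directions. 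The plethystic exponential then matches the topological vertex decomposition on the PT side with the corresponding connected-membrane vertex on the M2 side.

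The main obstacle, in my view, is the construction and analysis of $\MM(Z)$ itself: one needs a stability condition ensuring that $\MM(Z)\to\Chow(Z)$ is proper, a perfect obstruction theory compatible with the $\Ct_z$-action, and the existence of a canonical square root of $\cK_\vir \otimes \det \Hd(\cF\otimes(\cL_4-\cL_5))$ with prescribed parity. The first is genuinely new phenomenon in the subject because neither GW nor DT counts fall out from purely homological properness considerations; the second and third must be compatible with the bubbling that appears when cycles become nontransverse to $D$, so that the gluing argument above is even formally justified. Once these foundations are in place, the matching of localization contributions should, I expect, reduce to the kind of arms-and-legs bookkeeping illustrated by Lemma~\ref{lemt1}, upgraded by the symmetrized symmetric algebra \eqref{bSdhsymm}.
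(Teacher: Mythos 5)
Conjecture~\ref{cM} is stated in the paper precisely as a conjecture from \cites{MDT}, and the paper offers no proof of it; the only verification given is the case of smooth minimal-degree curves, where it reduces to formula~\eqref{pptsum} via the twisted Macdonald computation. So there is no ``paper's own proof'' to compare your argument against --- what you have written is a plan of attack for an open problem, not a proof, and it should be evaluated as such. Several of the steps as sketched contain gaps that you should be aware of even as a research program.

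First, the reduction by ``taking $\log_{\bS_\Chow}$'' to the locus of irreducible cycles is not automatic. The operation $\bS_\Chow$ pushes forward along the addition maps $(+)_n : \Chow(X)^n \to \Chow(X)$ and takes $S(n)$-invariants; these maps are finite but very far from being isomorphisms onto their images, the stratification of $\Chow(X)$ by decomposition type of a cycle is highly singular, and an identity in $K(\Chow(X))(\!(z)\!)$ cannot be tested pointwise on the dense locus of reduced irreducible cycles without some substantial argument (flatness, or a sheaf-theoretic d\'evissage adapted to the $(+)_n$-stratification). You would also need to show that the PT-side pushforward genuinely factors as a product over connected components of the support at the level of $K$-theory with its square-root twist, not just set-theoretically; the square root $\left(\cK_\vir \otimes \det \Hd(\cF\otimes(\cL_4-\cL_5))\right)^{1/2}$ only exists modulo a line bundle pulled back from $\Chow$, and compatibility of these choices across strata is part of what needs proving.

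Second, the degeneration step is considerably more delicate than you indicate. For K-theoretic counts, the gluing formula of Section~\ref{s_gluingDT} is \emph{not} the naive pairing in $\Hhd(\Hilb(D))$: as recalled in the text, it acquires a nontrivial $Q$- and $z$-dependent correction, see \cites{Giv1} and Section 6.5 of \cites{PCMI}. An analogous K-theoretic gluing statement on the membrane side of \cites{MDT} would have to be formulated and shown to have exactly the same correction for the two sides of your equality to degenerate compatibly. Moreover, the Levine--Pandharipande relation \eqref{LP} does not reduce a given $X$ to toric $X$ for the purpose of proving a pointwise identity over $\Chow(X)$; it gives cobordism relations that constrain genus-$0$ type integrals, and the paper itself cautions that \eqref{LP} ``by itself does not reduce the DT theory'' to products of projective spaces. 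Converting it into a reduction tool here would require a separate argument about what invariants are cobordism-determined, together with the compatibility of that argument with the extra structure (the $\Ct_z$-action, the $\cL_4\oplus\cL_5$ twist, the pushforward to $\Chow$) that is specific to this conjecture.

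Finally, you correctly single out the deepest issue: the very existence of a proper $\MM(Z)\to\Chow(Z)$ with the expected perfect obstruction theory and the symmetrized virtual structure sheaf is part of the conjectural package of \cites{MDT}, not something you can assume. Without it, your localization step on the right-hand side of \eqref{ZKMMChow} has nothing to localize. This is why the statement is still a conjecture. Your outline is the reasonable playbook --- factor off the exponential, localize with respect to $\Ct_z$, attack vertices and edges by arms-and-legs computations generalizing Lemma~\ref{lemt1} --- but none of the load-bearing ingredients (M2 foundations, K-theoretic gluing with corrections, the reduction to irreducible cycles) is currently available, and the paper does not supply them either.
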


\noindent 
This becomes formula \eqref{pptsum} for points in $\Chow(X)$
corresponding to smooth curves.

\subsection{Degree 0 DT counts}\label{s_deg0}

\subsubsection{}

Our discussion so far explicitly ignored DT counts in degree zero like
we did in \eqref{DTPT}. While simpler than curve counts, these
counts played an important role in the development of DT theory as an 
important 
testing ground on which many of the ideas presented above
were developed. 

In particular, Conjecture \ref{cM} and the whole paper
\cites{MDT} were inspired by a conjecture of Nekrasov \cites{NekM} that matches the 
K-theoretic degree 0 Hilbert scheme counts to the contributions of fields of
M-theory to its partition function. 

\subsubsection{}

While degree 0 counts can be defined for an arbitrary $X$, equivariant
localization and the algebraic cobordism ideas of Levine and
Pandharipande \cites{LP} reduce the general case to $GL(3)$-equivariant
computations for $X = \A^3$.  If $\bT \subset GL(3)$ is the maximal
torus, then 
\begin{align*}
  \Hilb(\A^3,n)^{\bT}  &= \{\textup{monomial ideals}\}  \\
&= \{\textup{$3d$ partitions of the number $n$} \}
\end{align*}
and all degree 0 counts are various refinements of the classical count 
\begin{align*}
\Mc(z):=\prod_{n>0} (1-z^n)^{-n} & = \sum_{\textup{$3d$ partitions $\pi$}}
z^{|\pi|} \\
&= \sum_n z^n
                           \chi_\textup{top}(\Hilb(\A^3,n)) \,, 
\end{align*}
that goes back to McMahon. All of these counts may be phrased as sums
over $3d$ partitions that weight $\pi$ by $z^{|\pi|}$ times some
function of the equivariant variables. 

\begin{figure}[!htbp]
  \centering
   \includegraphics[scale=0.75]{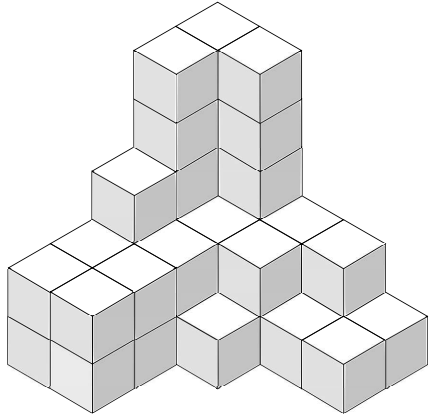}
 \caption{Monomial ideals in $\C[x_1,x_2,x_3]$ of codimension $n$
   correspond to 3-dimensional partitions of the  number $n$.}
\label{f_pile}
\end{figure}

\subsubsection{}
The exact combinatorial nature of these weights will be discussed in
Section \ref{s_0again}, here we only state the results. 

\begin{Theorem}[\cites{MNOP2}]
  \begin{equation}
    \sum_n (-z)^n \int_{\left[\Hilb(\A^3,n)\right]_\vir} 1 =
    \Mc(z)^{\int_{\A^3} (c_3-c_1 c_2)} \,.  \label{HilbC3} 
  \end{equation}
\end{Theorem}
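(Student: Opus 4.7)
The plan is to apply equivariant localization with respect to the maximal torus $\bT = (\Ct)^3$ acting on $\A^3$. The $\bT$-fixed points of $\Hilb(\A^3,n)$ are the monomial ideals $I_\pi$, indexed by $3$-dimensional partitions $\pi$ of $n$, and at each such fixed point the virtual tangent space
$$T^\vir_{I_\pi} \;=\; \Ext^1(I_\pi,I_\pi) - \Ext^2(I_\pi,I_\pi)$$
has zero equivariant rank since $\virdim\Hilb(\A^3,n)=0$. The Behrend--Fantechi/Graber--Pandharipande virtual localization theorem then reduces the left-hand side to
$$\sum_{n\ge 0}(-z)^n\sum_{|\pi|=n} \frac{1}{e(T^\vir_{I_\pi})}\,.$$

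The first step is to give a closed-form character formula for $T^\vir_{I_\pi}$, in complete parallel with Lemma \ref{lemt1} in the $2$d case. Writing $\bG_\pi = \chi_{\A^3}(\cO/I_\pi) = \sum_{\square\in\pi} t^{-\square}$ for the character of the quotient, and using the identity $[\cO/I_\pi] = \bG_\pi\,(1-t_1^{-1})(1-t_2^{-1})(1-t_3^{-1})\,[\cO]$ in $K_\bT(\A^3)$ together with the vanishing of $\Ext^{\ge 3}$ on the affine open $\A^3$, a direct sesquilinear computation analogous to \eqref{ext1chichi} yields an explicit formula for $T^\vir_{I_\pi}$ as a polynomial in $\bG_\pi,\bar\bG_\pi$ and the $t_i$. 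Its equivariant Euler class then factors as an explicit product of weights attached to the boxes of $\pi$ on the three legs and in the bulk of the $3$d diagram.

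The main step is the combinatorial identity
$$\sum_{\pi} (-z)^{|\pi|} \, e(T^\vir_{I_\pi})^{-1} \;=\; \Mc(z)^{-(t_1+t_2)(t_1+t_3)(t_2+t_3)/(t_1t_2t_3)}\,,$$
the right-hand exponent being the equivariant integral $\int_{\A^3}(c_3-c_1c_2)$. I would attack it by studying both sides as $S_3$-symmetric rational functions of $t_1,t_2,t_3$ homogeneous of degree $0$: a pole-order analysis for the left-hand side shows that its logarithm must be a scalar multiple of the unique $S_3$-invariant of this bidegree with the required pole behavior, namely $(t_1+t_2)(t_1+t_3)(t_2+t_3)/(t_1t_2t_3)$, reducing the problem to determining a single coefficient function of $z$. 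That coefficient is then pinned down by specializing to the Calabi--Yau slice $t_1+t_2+t_3=0$, where the exponent becomes $1$ and the identity collapses to MacMahon's classical generating function $\sum_\pi z^{|\pi|} = \Mc(z)$ for $3$d partitions.

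The hard part is twofold: controlling the pole structure tightly enough to cut the identity down to a one-parameter family, and reconciling the signs in $(-z)^{|\pi|}$ with the positive MacMahon side at the Calabi--Yau specialization, where the box-by-box evaluation of $e(T^\vir_{I_\pi})^{-1}$ must conspire to produce $(-1)^{|\pi|}$ and an otherwise trivial weight. An alternative route, which sidesteps the global pole analysis, treats the $\A^3$ answer as the ``vacuum'' specialization of the three-leg topological vertex (see Section \ref{s_toric}) with empty asymptotic partitions, where the required $\Mc$-factor can be isolated directly from the vertex.
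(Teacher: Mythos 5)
The survey does not itself prove this theorem; it states it with a citation and, in Section~\ref{s_0again}, sets up the localization framework (the $3$d analogue of Lemma~\ref{lemt1}, formula~\eqref{Tvirf}) without carrying out the combinatorial evaluation. So there is no in-paper proof for me to compare against line by line; I will instead assess whether your plan would go through.

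Your opening moves are exactly right and coincide with what the paper sets up: virtual localization over the $\bT$-fixed locus of monomial ideals, the sesquilinear character computation giving
\begin{equation*}
T^\vir_{I_\pi}= \bG_\pi - t_1t_2t_3\,\overline{\bG}_\pi - (1-t_1)(1-t_2)(1-t_3)\,\bG_\pi\overline{\bG}_\pi\,,
\end{equation*}
and the observation that $\virdim=0$. Your Calabi--Yau check at $t_1t_2t_3=1$ is also sound: $T^\vir$ becomes anti-self-dual, $e(T^\vir_{I_\pi})=(-1)^{|\pi|}$, and the sum collapses to $\Mc(z)$. That confirms the single unknown coefficient once the rest of the argument is in place.

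The genuine gap is the ``pole-order analysis'' step, and I think you underestimate it. At a fixed order $z^n$ the left-hand side is a finite sum $\sum_{|\pi|=n}(-1)^n e(T^\vir_{I_\pi})^{-1}$ of rational functions whose denominators involve many linear forms $a t_1 + b t_2 + c t_3$ with $(a,b,c)$ drawn from the monomials in $\bG_\pi\overline{\bG}_\pi$. There is no soft reason these poles cancel; showing that the sum (let alone its logarithm) is regular away from $t_i=0$, with only simple poles there, is essentially equivalent to the identity you want. Nor is the asserted uniqueness of the $S_3$-invariant with ``the required pole behavior'' correct as stated: degree-$0$ symmetric rational functions with simple poles along $t_i=0$ form a space of dimension greater than one (e.g.\ $1$ and $e_1e_2/e_3$ in elementary symmetric polynomials both qualify), so even granting the pole control you would not be down to a single scalar. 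The paper itself flags the difficulty obliquely: it notes that, unlike the surface case~\eqref{Ext1f2}, the $3$d formula~\eqref{Tvirf} has large and equal numbers of positive and negative monomials with no clean arm/leg factorization, which is precisely why the pole structure is not transparent.

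Your alternative route is the one that actually works. The sum over $3$d partitions with these weights is the empty-legged equivariant vertex from Section~\ref{s_toric}; the known evaluations proceed through the transfer-matrix/vertex-operator formalism (slicing a $3$d partition into a sequence of interlacing $2$d partitions and writing the weighted sum as a matrix element of vertex operators on Fock space, as in \cites{OR,ORV}) rather than through a pole-counting argument. That machinery is what produces the $\Mc(z)^{\text{exponent}}$ structure for general $t_i$; the CY specialization is then a consistency check, not the engine of the proof. If you want a complete argument, replace the pole-analysis step with that vertex-operator computation, or, for the non-equivariant statement over a projective $X$, invoke the algebraic-cobordism reduction~\cites{LP} to the toric cases once the equivariant vertex evaluation is in hand.
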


\noindent
Here 
$$
\int_{\A^3} (c_3-c_1 c_2) = - \frac{(t_1+t_2) (t_1+t_3) (t_2+t_3)}{t_1
t_2 t_3}
$$
where $\{t_i\}$ are the Chern roots of the tangent bundle. If $c_1=0$
this reduces to the McMahon identity. 

\subsubsection{}
In the K-theoretic situation, we take $X=\A^3$ in \eqref{localX} with
both $\cL_4$ and $\cL_5$ trivial, so that $Z=\A^5$. On $Z$, we work 
equivariantly with respect to 
\begin{equation}
1 \to \Ct_z \to SL(5)^{z} \to GL(3) \to 1 \label{exactGL3} 
\end{equation}
where $SL(5)$ acts by automorphisms of $Z$ preserving $\Omega^5_Z$. We
denote by 
$$
t_1,t_2, t_3, t_4 = \frac{z}{\sqrt{t_1 t_2 t_3}}, t_5 =
  \frac{1}{z\sqrt{t_1 t_2 t_3}}\,,
$$
the Chern roots of $TZ$, where we picked the most symmetric splitting
of \eqref{exactGL3} for convenience. 

We define the K-theoretic integrand by the same formula as
\eqref{deftODT}
\begin{align}
  \label{deftODT2}
  \tO_{\DT} &= (-1)^{\chi(\cO_C)}
 \cO_\vir \otimes \left( \cK_\vir \otimes
   \det\Hd(\cO_C\otimes(\cL_4-\cL_5)) \right)^{1/2} \notag \\ 
&= (-z)^{\chi(\cO_C)}
 \cO_\vir \otimes \cK_\vir^{1/2}  \,. 
\end{align}
Define 
\begin{equation}
\aroof(t) = t^{1/2} - t^{-1/2} \,. \label{def_aroof}
\end{equation}
The following result was conjectured by Nekrasov \cites{NekM} 

\begin{Theorem}[\cites{PCMI}] We have 
  \begin{align}
    \chi(\Hilb(\A^3,\textup{points}), \tO_{\DT}) &= 
\bSd \left(\chi(Z,T^*Z-T Z)_\textup{moving}\right) \label{sumNK} \\
& = \bSd \, \frac{\prod_{i<j\le 3} \aroof(t_i t_j)}{\prod_{i\le 5}
  \aroof(t_i)} \notag
  \end{align}
where moving in \eqref{sumNK} 
means nonconstant terms in the $z\to 0$ expansion of the corresponding 
rational function. 
\end{Theorem}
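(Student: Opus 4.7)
The plan is to reduce to a sum over $3$-dimensional partitions by equivariant localization with respect to the maximal torus $T\subset GL(3)$, and then to recognize the resulting box-counting generating function as the plethystic exponential $\bSd$ of the moving part of $\chi(Z,T^*Z-TZ)$. The $T$-fixed points of $\Hilb(\A^3,n)$ are, as in Section \ref{s_partitions} and Figure \ref{f_pile}, the $3$-dimensional partitions $\pi$ of $n$, so after localization the left-hand side becomes a finite sum of rational characters indexed by such $\pi$.

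\textbf{Step 1 (Vertex character).} For each $3d$ partition $\pi$ one computes, in the same style as the $\A^2$ calculation of Section \ref{s_Extchichi}, the $T$-character of the virtual tangent space
$$T^{\vir}_\pi \;=\; \chi_{\A^3}(\cO,\cO) - \chi_{\A^3}(I_\pi, I_\pi)\,,$$
using the expansion $[\cO/I_\pi] = (1-t_1^{-1})(1-t_2^{-1})(1-t_3^{-1})\, Q_\pi$ with $Q_\pi = \sum_{\square\in\pi} t^{-\square}$. The outcome is a manifestly sesquilinear polynomial in $t_1,t_2,t_3$, whose detailed arm/leg expansion generalizes Lemma~\ref{lemt1}. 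By definition of $\tO_{\DT}$ in \eqref{deftODT2}, its restriction to the fixed point $I_\pi$ then equals
$$\tO_{\DT}\big|_{I_\pi} \;=\; (-z)^{|\pi|}\, \bSdh\!\bigl(-T^{\vir}_\pi - \Hd(\cO_{C_\pi}\otimes(\cL_4-\cL_5))\bigr)\,,$$
restricted to moving weights. The Calabi-Yau $5$-fold structure of $Z$ guarantees that the character inside $\bSdh$ is self-dual up to the twist by $t_1t_2t_3$, which is exactly the setup required for \eqref{bSdhsymm} to apply and yield an honest square root, as proved in \cites{MDT}.

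\textbf{Step 2 (Hidden $S_5$-symmetry and identification).} Once the M-theory normal weights $t_4,t_5$ with $t_1\cdots t_5 = 1$ are absorbed into the fixed-point contribution, one checks that the combined character is invariant under permutations of $\{t_1,\dots,t_5\}$ that respect the Calabi-Yau constraint; this is the hallmark of working on a CY$5$-fold and is the K-theoretic analogue of the periodicity \eqref{period}. With this hidden symmetry in hand, the plan is to identify the generating series as a plethystic exponential by verifying its logarithm: writing $\sum_\pi (-z)^{|\pi|}(\cdots) = \bSd(F)$ is equivalent to the statement that $F$ counts \emph{connected} fixed loci, and for $X = \A^3$ the only connected degree-$0$ membranes are point-like, whose contribution is precisely $\chi(Z, T^*Z-TZ)_{\mov}$. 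This matches the smooth-curve computation \eqref{pptsum} degenerated to a point and conforms to the membrane picture of Conjecture \ref{cM}.

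\textbf{Step 3 (Explicit product).} It remains to rewrite $\chi(Z, T^*Z - TZ)_{\mov}$ as the product $\prod_{i<j\le 3}\aroof(t_it_j) \big/ \prod_{i\le 5}\aroof(t_i)$. Since $TZ = \sum_{i\le 5} t_i$ and $\wedge^2 TZ = \sum_{i<j} t_it_j$, and since $\aroof(t) = t^{1/2}-t^{-1/2}$ is the K-theoretic Euler class, the symmetrized $\chi$-integral over $Z = \A^5$ produces exactly these $\aroof$-factors in the denominator and, through the CY$5$ pairing $\Hd^{\ge 2}$ on $TZ$, the $\aroof(t_it_j)$ factors in the numerator. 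Extracting the moving part amounts to discarding the unique $z^0$ term, which cancels the constant $1$.

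\textbf{Main obstacle.} The decisive step is Step 2: proving that the $3$-dimensional box-counting sum actually collapses to a plethystic exponential with such a compact description. There is no classical identity of Macdonald type in dimension three to lean on, and the $SL(5)$-symmetry that governs the answer is completely invisible on the DT side. The most natural route is rigidity on a CY$5$-compactification of $Z$ where $SL(5)$ acts, coupled with the smooth-curve reduction of the previous subsection to fix the overall normalisation; alternatively, one can attempt a direct induction on $|\pi|$ using the multiplicative behaviour of $\bSdh$ under the addition of a box, but this requires substantial combinatorial bookkeeping with $\aroof$-weights.
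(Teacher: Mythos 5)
Your Step 1 (localization to $3$d partitions, computing $T^\vir_\pi$ as in \eqref{Tvirf}, forming $\tO_{\DT}|_{I_\pi}$, and invoking the square-root existence from \cites{MDT}) and Step 3 (unwinding $\chi(Z,T^*Z-TZ)$ over $Z=\A^5$ into the $\aroof$-product) are both correct and in line with the setup of Sections \ref{s_deg0} and \ref{s_0again}. The genuine gap is Step 2, and you have correctly identified it as such; but neither of the routes you sketch can actually close it.

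The membrane heuristic is circular here: Conjecture \ref{cM} specialized to $X=\A^3$, $d=0$ \emph{is} the statement being proved, so one cannot appeal to the membrane picture of ``connected objects contribute the argument of $\bSd$'' as the reason the box-counting sum collapses to a plethystic exponential. Worse, the paper itself points out in Section \ref{s_deg0} the ``puzzling doubling'': the fields of M-theory contribute $\bSd\,\chi(Z,T^*Z)$ or $\bSd\,\chi(Z,-TZ)$, but the theorem produces their \emph{product}, so a naive membrane/field-content argument does not even give the right answer. The claimed $S_5$-covariance is likewise not usable on the DT side: $t_4,t_5$ enter the fixed-point weights only through $t_4 t_5=(t_1t_2t_3)^{-1}$ and $t_4/t_5=z^2$, and there is no visible $SL(5)$-action on $\Hilb(\A^3)$ to rigidify against --- you acknowledge this yourself. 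The proof cited in \cites{PCMI} takes a different route entirely. Roughly, one introduces $\kappa=(t_1t_2t_3)^{1/2}$ and treats the partition function as a series in $z$ whose coefficients are rational in $\kappa$ with the CY3 ratios held fixed; a rigidity lemma controls the poles and degrees of these coefficients in $\kappa$, the CY specialization $\kappa=1$ reduces to a signed McMahon identity because $T^\vir$ is then anti-self-dual (so $\aroof(-T^\vir_\pi)=\pm1$), and the general $\kappa$-dependence is then forced by comparing asymptotics. Nothing in this argument passes through membranes, $SL(5)$, or a Macdonald-type identity; it is a bare-hands rigidity/specialization argument, and any complete writeup would need to reproduce that structure or find a genuinely new combinatorial proof of the plethystic collapse.
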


\subsubsection{}
The motivation for this remarkable conjecture came from a comparison
with the contributions of \emph{fields} of M-theory to its partition
functions. The fields of M-theory are 
\begin{itemize}
\item[---]  a Riemannian metric on $Z$,
\item[---]  its superpartner 
\emph{gravitino} which is a field of spin $3/2$,
\item[---]  the 3-form field
under which the M2 branes are electrically charged.
\end{itemize}
These are defined 
modulo various gauge equivalences that include diffeomorphisms of
$Z$.  To compute their contributions 
to the partition function is an exercise in representation theory of 
$$
SU(5) \subset SO(10,\R) 
$$
and it gives the remarkable prediction \eqref{sumNK}, modulo a certain
puzzling detail. See Section 3.3 in \cites{PCMI}
for a pedagogical review. 

The puzzling detail is a certain doubling that happens in the answer
\eqref{sumNK}. Namely, fields of M-theory contribute 
$$
\textup{either} \quad \bSd \left(\chi(Z,T^*Z)\right)
\quad 
\textup{or} \quad  
\bSd \left(\chi(Z,-T
  Z)\right)  \,, 
$$
depending on a certain choice. However in \eqref{sumNK} we see the
product of both answers. 

There is an exactly parallel issue in trying to match \eqref{HilbC3} to the GW
counts for $X= \A^3$ which, by the discussion in Section \ref{sHodge}
is the generating function for triple Hodge integrals. Aside from
transcendental constants like $\zeta(3)$, one gets a match between
\eqref{HilbC3}
and the \emph{square} of the GW answer computed in \cites{FP}, see
e.g. \ the discussion in Section 2.4.5 in \cites{OkECM}.  A good explanation
for this doubling phenomenon in degree 0 is yet to be found.

\subsection{Hidden symmetries}

\subsubsection{}
Obviously, there could be more than one $\Ct \subset
\Aut(Z,\Omega^5)$ with 3-dimensional set of fixed points. For
instance, all $\cL_i$ in \eqref{local_curveM} play a completely 
symmetric role. 

The permutation 
$$
\cL_4 \leftrightarrow \cL_5\,, \quad z \leftrightarrow z^{-1} 
$$
is related to the parity of DT counts under $z\mapsto z^{-1}$
discussed in Section \ref{s_parity}. While this is already a deep 
symmetry,  the other permutations are more 
mysterious from the points of view of DT counts. 
In particular, they mix equivariant variables with the boxcounting 
variable $z$.

\subsubsection{}
Permutations like $\sigma=(14)(25)$ 
that preserve the blocks of the partition
$$
N_{X/B} = \cL_1 \oplus \cL_2\,, \quad N_{Z/X} = \cL_4 \oplus \cL_5\,, 
$$
may be interpreted as an instance of the invariance of K-theoretic 
curve counts under \emph{symplectic duality}, also known as the 
3-dimensional mirror symmetry, as well as under other names. 

\subsubsection{}\label{s_fibrHilb} 

{}From both conceptual and technical point of view, it is very
productive to relate DT counts for local curves $X$ as in 
\eqref{local_curve}
to the enumerative theory of sections $f$ of the corresponding 
bundle
\begin{equation}
  \label{fibrHilb}
\xymatrix{
  \Hilb(\A^2,\textup{points}) \ar@{^{(}->}[r]  & \Hilb(\textup{Fiber}) 
\ar[d] \\
& B \ar@/_2pc/_f[u]
} 
\end{equation}
of the Hilbert schemes of points of the fibers in \eqref{local_curve}. In fact, there is a natural
identification 
\begin{equation}
  \label{PT=Quasi}
  \PT(X) = \textup{Quasimap sections $f$ of \eqref{fibrHilb}}
\end{equation}
where quasimaps are maps with certain singularities that will be
discussed presently. 

Very important for this is the fact that 
\begin{align}
  \Hilb(\A^2,n) & = \textup{a Nakajima quiver variety, see \cites{Nakq1,Nakq2}}
  \notag \\
& = \mu^{-1}(0) \rdd GL(n) \label{muGIT} \\
& = \{\textup{Higgs vacua of a certain susy gauge theory}\} \label{HilbHiggs}
\end{align}
where 
$$
\mu: T^* M \to (\Lie GL(n))^* 
$$
is the moment map and $M$ is a certain 
$GL(n)$-module\footnote{For $\Hilb(\A^2,n)$, one takes $M
  = \End(\C^n)\oplus \C^n$, see \cites{NakL}.}. 

The quotient in \eqref{muGIT} is a GIT quotient with a certain choice
of stability conditions, which here concretely means a choice of a 
characters $\chi=(\det)^{\pm 1}$ of $GL(n)$. By definition \cites{CKM}, a quasimap 
$f$ to a GIT quotient is a map 
$$
f: B \to \textup{quotient stack} 
$$
that evaluates to a stable point at all but finitely many smooth points of
$B$. Concretely this means giving: 
\begin{itemize}
\item[---] a $GL(n)$-bundle of prequotients over $B$ with 
\item[---] a section $f$ that generically lands in the stable locus, 
\end{itemize}
where both pieces of data are allowed to vary and are considered
modulo isomorphism. 

GIT quotients of lci
affine algebraic varieties have a technically particularly 
nice enumerative theory
of quasimaps, see \cites{CKM}, and $\Hilb(\A^2,n)$ falls into
this category. 

\subsubsection{}
Enumerative K-theory of quasimaps to quotients of the form
\eqref{muGIT} is a mathematical realization of twisted supersymmetric
indices in certain 3-dimensional gauge theories with 
\begin{align}
  \textup{space-time} &= B \times S^1 \label{BS^1} \\
\textup{gauge group} & = U(n) \notag \\
\textup{matter} & = T^* M  \notag \,. 
\end{align}
One way in which a gauge theory can be in a lowest energy state is
when all gauge fields are constant and all matter fields sit at the
bottom of the potential, giving 
\begin{align*}
\textup{Higgs vacua} &= \textup{minima of the potential} \, \Big/
\, \textup{global gauge} \\
&= \left(\mu^{-1}(c) \cap \mu_\R^{-1}(\theta)\right) / U(n) \\ 
&= \mu^{-1}(c) \rdd_\theta \, GL(n) \,, 
\end{align*}
for some parameters $c,\theta$ in dual of the center of $GL(n)$, where 
$\mu_\R$ is the real moment map. 

A Hamiltonian approach to susy
indices on manifold of the form \eqref{BS^1} is via susy quantum
mechanics, that is, the study Dirac operator on
  the space of 
$$
\textup{modulated vacua} = \big\{\textup{maps $B \to$ Higgs vacua}
\big\}\,,
$$
and this mathematically formalized as an enumerative K-theory of
quasimaps as above. 

\subsubsection{}

In theoretical physics, there is a very powerful set of ideas that
equates such counts for different gauge theories while also mixing
their equivariant and degree-counting variables. Such pairs of gauge
theories are called symplectically dual, 3d mirrors pairs etc., see
e.g.\ \cites{IS,BHOO,BDG,BDGH,BLPW1,BLPW2,NakC1,
NakC2,NakC3}.

In particular, $\Hilb(\A^2,n)$ is self-dual, but the
action of the duality 
on the parameters of the theory is very nontrivial and precisely
correspond to the permutation $\sigma=(14)(25)$ in \eqref{local_curveM}. 

Heuristically, the identification of quasimap counts may be explained
as follows: 
\begin{itemize}
\item[---] moduli of vacua has other irreducible components (known as
  \emph{branches}), and those can be also used to compute susy indices, 
\item[---] the Higgs branch of a gauge theory should be identified with the
  so-called Coulomb branch of the mirror theory, and vice versa. 
\end{itemize}
Recently, there has been a major progress in mathematical understanding
of Coulomb branches, see \cites{NakC1,
NakC2,NakC3}. There is still a very long way to making
the above heuristic rigorous, but there are other ideas and other
 technical tools with
which one \emph{can} prove the equality of quasimap counts, see
\cites{AO, AO2}. We will come back to this in Section \ref{s_actual} below after the
right framework and the right language have been introduced. 

\subsubsection{}\label{s_min_res}
To get more complicated dualities from \eqref{local_curveM} we denote
$$
\Ct_{12}\,, \, \Ct_{45}  \subset \Aut(Z,\Omega^5) 
$$
the groups scaling respective $\cL_i$ with  opposite weights and
consider the subgroups 
$$
\mu_n \subset \Ct_{12}\,, \quad \mu_m \subset \Ct_{45} \,,
$$
of roots of unity of respective orders. The new Calabi-Yau 5-fold 
$$
Z_{n,m} = 
\textup{minimal resolution}
\left(Z\Big/\mu_n \times \mu_m \right)
$$
fibers over $B$ in $A_{n+1} \times A_{m+1}$, where $A_{n+1}$ is the 
minimal resolutions of the corresponding surface singularity. 
For $Z_{n,m}$ we can take 
$$
\Ct_z = \Ct_{12} \big/ \mu_n  \quad \textup{or} \quad 
\Ct_{45} \big/ \mu_m
$$
and this will correspond to mirror pairs of the form 
\begin{equation}
  \label{Mrn}
  \begin{matrix}
    \textup{moduli of rank $m$}  \\
\textup{bundles
on $A_{n+1}$}  
  \end{matrix} \quad 
\leftrightarrow  \quad 
 \begin{matrix}
    \textup{moduli of rank $n$}  \\
\textup{bundles on $A_{m+1}$}  
  \end{matrix} 
\end{equation}
where the moduli spaces\footnote{
technically, moduli of framed torsion-free sheaves of a certain rank
like we saw in Section \ref{s_torsion_free}.} are
higher rank brothers of the Hilbert schemes of points in the
respective surface and, like their rank 1 siblings, they are
examples of Nakajima quiver varieties, see \cites{NakL}. 

In this example, the fixed locus $Z_{n,m}^{\Ct_z}$ has $n$ or $m$
components. The interactions between these components, see \cites{MDT}, 
which we did not discuss here, are
similar to the interaction between partitions in Section \ref{s_rtuple} and
they similarly correspond to having more than one section in
\eqref{cOq}. 

\subsubsection{}\label{s_engin}

Curve counts in \eqref{Mrn} contain, in particular, classical
computations in equivariant K-theory of the corresponding varieties,
which is the subject of Nekrasov theory. This is how Nekrasov theory
can be \emph{engineered} from the DT theory, that is, this is how 
Nekrasov counts can be seen as instances of counting 
M2-branes or other curves. 

For the
record, in theoretical physics, it has been understood long ago
\cites{Katz_eng}
that M-theory reduces to
corresponding supersymmetric gauge theories in the case at hand, and
in particular some form of this connection was clear to Nekrasov at
the time the theory of \cites{NekInst} was created, see in particular 
Section 4 in \cites{NekInst}. Still, it is nice to see a precise
match appear as a special case of general mathematical conjectures.

\section{Toric DT counts}\label{s_toric}

\subsection{Degree 0 again}\label{s_0again}

\subsubsection{} 

For ease of writing formulas like \eqref{bEh}, it is convenient 
to extend \eqref{def_aroof} to a map 
$$
\aroof:  K_\bT(\pt)=\Z T^\wedge \to \Q\left(\sqrt{\bT}\right) 
$$
by the rule 
$$
\aroof\left( \sum m_i w_i \right) = \prod 
\left(w_i^{1/2} - w_i^{-1/2}\right)^{m_i}\,,
\quad m_i \in \Z, \,\, w_i \in \bT^\wedge \,. 
$$
Here $\bT^\wedge$ are the characters of $\bT$ and 
$\sqrt{\bT}$ is the torus with characters $w^{1/2}$, for all 
$w\in \bT^\wedge$.  This is something we have seen this before, for if $V$ is a $\bT$-module then 
$$
\aroof( - \textup{character of $V$}) = 
\textup{character of} \,  \, \bSdh V \,,
$$
where $\bSdh V$ is the symmetrized symmetric algebra from 
\eqref{bSdhsymm}.

\subsubsection{} 

The discussion of Section \ref{s_NT} generalizes verbatim for
$\Hilb(\A^3)$ in place of $\Hilb(\A^2)$ if we replace the structure
sheaves etc.\ by their virtual analogs \eqref{deftODT2}. 
Equivariant localization works for virtual counts \cites{GP,FGoe} and gives 
\begin{align}
 \bZ_{\DT}(\A^3) &=  \chi(\Hilb(\A^3,\textup{points}), \tO_{\DT} )
\notag \\
& = \sum_{\textup{$3d$ partitions $\pi$}} (-z)^{|\pi|} 
\,\, \aroof\left(-T^\vir_{I_\pi} \Hilb \right) \,, \label{ZA3}
\end{align}
where the virtual tangent space is the difference
$$
T^\vir_{I_\pi} \Hilb = \Def_{I_\pi} - \Obs_{I_\pi}
$$
between the 
deformations 
and obstruction spaces at the monomial ideal $I_\pi$. These
correspond to $\Ext^{i}(I,I)$, with $i=1,2$ respectively, and give the 
old\footnote{In fact, interpreted to include obstructions to
  obstructions, etc.\ this formula hold in any dimension. It is
  difficult, though, to incorporate 
 such higher obstructions into enumerative
  counts, which is why $\dim X = 3$ is special.} 
formula \eqref{ext1chichi} 
\begin{equation}
T^\vir_{I_\pi} = \chi(\cO) - \chi(I_\pi,I_\pi) \,. \label{Tvirpi}
\end{equation}

\subsubsection{} 

In parallel with \eqref{bV2}, we define 
\begin{equation}
\bG_\pi = \chi_{\A^3} (\cO/I_\pi) = \sum_{\bx = (i,j,k) \in \pi} 
t_1^{1-i} t_2^{1-j} t_3^{1-k} \,, \label{bV3}
\end{equation}
where the boxes correspond to monomials as follows 
$$
\pi \owns \bx = (i,j,k) \,\, \leftrightarrow \,\, x_1^{i-1} x_2^{j-1} 
x_3^{k-1} \notin I_\pi \,. 
$$

\begin{Lemma}
The character of \eqref{Tvirpi} is given by 
\begin{equation}
  \label{Tvirf}
 T^\vir_{I_\pi} = \bG_\pi  - t_1 t_2 t_3\, 
\overline{\bG}_\pi - 
(1-t_1)(1-t_2)(1-t_3) \, \bG_\pi \overline{\bG}_\pi \,. 
\end{equation}
\end{Lemma}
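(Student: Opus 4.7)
The plan is to mimic the proof of Lemma \ref{lemt1}, replacing $\C^2$ by $\A^3$ everywhere and using that for $\A^3$ there is no divisor at infinity, so the entire $\chi_{\A^3}(\cO,\cO)$ term stays in the computation. The starting point is the identity \eqref{Tvirpi}, which expresses $T^\vir_{I_\pi}$ as $\chi_{\A^3}(\cO) - \chi_{\A^3}(I_\pi,I_\pi)$; this reduces the statement to evaluating a single sesquilinear pairing on $K_\bT(\A^3) \cong K_\bT(\pt)$.

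First I would write $[I_\pi] = [\cO] - [\cO/I_\pi]$ and, exactly as in the proof of Lemma \ref{lemt1}, use the identity
\[
[\cO/I_\pi] = \bG_\pi \,(1-t_1^{-1})(1-t_2^{-1})(1-t_3^{-1}) \, [\cO]
\]
in $K_\bT(\A^3)$, which holds because passing from a $0$-dimensional sheaf on $\A^3$ to its K-theory class as a multiple of $[\cO]$ multiplies the character by the K-theoretic Euler class of the zero section, namely $\kappa := (1-t_1^{-1})(1-t_2^{-1})(1-t_3^{-1})$. Hence $[I_\pi]/[\cO] = 1 - \bG_\pi\kappa$.

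Next I would invoke sesquilinearity of $\chi_{\A^3}(\,\cdot\,,\,\cdot\,)$, together with the normalization $\chi_{\A^3}(\cO,\cO) = 1/\kappa$, to obtain
\[
\chi_{\A^3}(I_\pi,I_\pi) = \frac{(1-\overline{\bG}_\pi\,\overline{\kappa})(1-\bG_\pi\kappa)}{\kappa}.
\]
Expanding the numerator and dividing by $\kappa$ gives four terms: $1/\kappa$, $-\bG_\pi$, $-\overline{\bG}_\pi\,\overline{\kappa}/\kappa$, and $\overline{\bG}_\pi \bG_\pi\,\overline{\kappa}$. Subtracting this from $\chi_{\A^3}(\cO,\cO) = 1/\kappa$ cancels the first term and, after applying the identity $\overline{\kappa} = -t_1 t_2 t_3 \, \kappa$ (which follows from $1-t_i = -t_i(1-t_i^{-1})$), produces exactly
\[
T^\vir_{I_\pi} = \bG_\pi - t_1 t_2 t_3 \, \overline{\bG}_\pi - (1-t_1)(1-t_2)(1-t_3) \, \bG_\pi \overline{\bG}_\pi,
\]
as claimed in \eqref{Tvirf}.

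There is no serious obstacle: the computation is a three-variable replay of \eqref{ext1chichi}, with the $(-D)$ twist from the 2d case removed (so the $\chi(\cO,\cO)$ term survives and is exactly what cancels against $\chi(\cO)$ in \eqref{Tvirpi}). The only thing one must be careful about is the sign in $\overline{\kappa} = -t_1 t_2 t_3 \kappa$, which is what produces the negative coefficient in front of $(1-t_1)(1-t_2)(1-t_3)$ in \eqref{Tvirf}.
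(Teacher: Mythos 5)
Your proof is correct and matches the paper's intent exactly: the paper's own ``proof'' is simply the remark ``the same as the proof of \eqref{Ext1f1} in Lemma \ref{lemt1},'' and you have carried out precisely that replay in three variables, including the correct sesquilinearity convention, the identity $[\cO/I_\pi]=\bG_\pi\kappa\,[\cO]$, and the sign in $\overline\kappa=-t_1t_2t_3\,\kappa$ which gives the minus sign in front of $(1-t_1)(1-t_2)(1-t_3)$. No gap.
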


\noindent
The proof is the same as the proof \eqref{Ext1f1} in Lemma
  \ref{lemt1}. It would be nice to have a good analog of 
\eqref{Ext1f2}, but one should bear in mind that, unlike 
\eqref{Ext1f2}, \eqref{Tvirf} has an equal number of positive and
negative monomials, and this number can be very large. 

\subsubsection{}
With this practical description of the left-hand side in 
\eqref{sumNK}, one can use e.g.\ a
computer algebra system to expand \eqref{sumNK} in powers of $z$ 
and get a sense of the nontriviality of this identity. In general,
boxcounting computations are a great resource for both practitioners
and students of the DT theory. It is very rewarding to see abstract
theories agree with computer experiments, and one is very often guided
by the latter in the search for correct general formulations. 

\subsection{Torus-fixed subschemes}

\subsubsection{}
Now suppose $X$ is a smooth quasiprojective toric variety, where toric means
that a $3$-dimensional torus $\bT$ acts on $X$ with an open orbit, or that
$X$ is glued from $\A^3$-charts using monomial transition
functions. As an example one can take $X= \bP^3, (\bP^1)^3$, or 
a local $\bP^1$ in \eqref{localX}. 

The combinatorics of $X$ is best captured by the corresponding
polyhedron $\Delta(X)$ ---  the image of the
real moment map in $(\Lie \bT)^*$, see Figure \ref{f_D(X)}. The
$k$-dimensional faces of $\Delta(X)$ are in bijection with 
$k$-dimensional torus orbits and, in particular, reduced irreducible 
$\bT$-invariant curves $\bL_\be\subset X$ correspond to the edges $\be \in \Delta(X)$. 
For example, in $\bP^3$ there are 6 such curves --- 3 coordinate lines
in $\A^3$ and 3 coordinate lines in the projective plane at infinity. 
\begin{figure}[!htbp]
  \centering
   \includegraphics[scale=0.64]{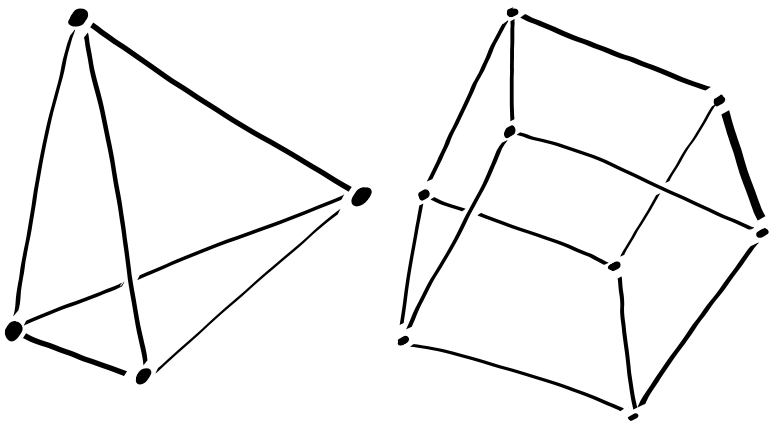}
 \caption{We have $\Delta(\bP^3)=$ tetrahedron, while 
$\Delta((\bP^1)^3)=$ cube.}
\label{f_D(X)}
\end{figure}

\subsubsection{}

The vertices $\bv\in \Delta(X)$ correspond to toric charts $\A^3_\bv
\subset X$. In such a chart, a $\bT$-invariant 
subscheme $C \subset X$ has to look like the ideal in Figure \ref{f_Ipi3}, 
that is, like a 3-dimensional partition $\pi_\bv$ which can have infinite legs in the 
coordinate directions. We have already 
seen a 2-dimensional version of this in Figure \ref{f_part2}. The infinite 
legs in Figure \ref{f_Ipi3} end asymptotically on three 2-dimensional
partitions $\lambda_{\vbe}$ indexed by the 
oriented edges 
$\vbe$ emanating from the vertex $\bv$. 

\begin{figure}[!htbp]
  \centering
   \includegraphics[scale=0.7]{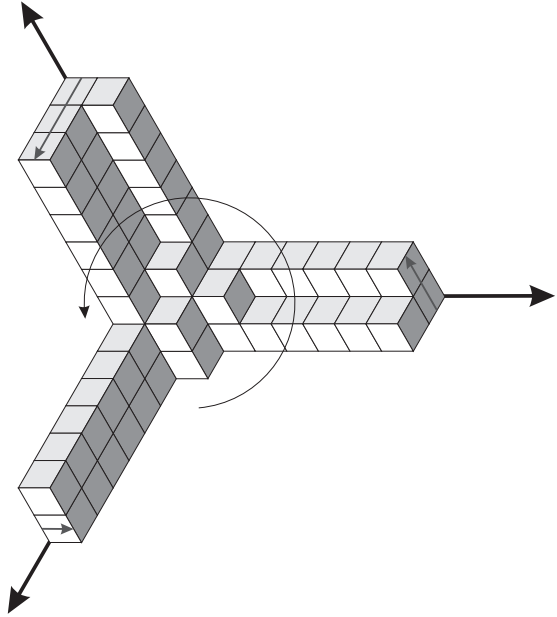}
 \caption{A one-dimensional 
ideal $I_\pi \in \C[x_1,x_2,x_3]$ corresponds to a 
3-dimensional partitions that can have infinite legs in 3-coordinate
directions. Just to label their asymptotic $2d$ partitions
$\lambda_\be$, it is convenient to use the
orientation of the figure that comes from the orientation of the
$\Delta(X)$. In this example, $\lambda_\be\in \{(2,1),(3,1),(1,1)\}$.}
  \label{f_Ipi3}
\end{figure}

Globally these partitions glue like in Figure \ref{f_edge}, where the length
of the edge should be considered as something much, much larger than
the size of a box. For the 3-dimensional partitions to glue, their
asymptotic partitions $\lambda_\be$ along the given edge have to match
like 
$$
\lambda_{\overrightarrow{\be}} =
\left(\lambda_{\overleftarrow{\be}}\right)' 
$$
if one follows the conventions of Figure \ref{f_Ipi3} and where prime
denotes
the transposed partition.

\begin{figure}[!htbp]
  \centering
   \includegraphics[scale=0.7]{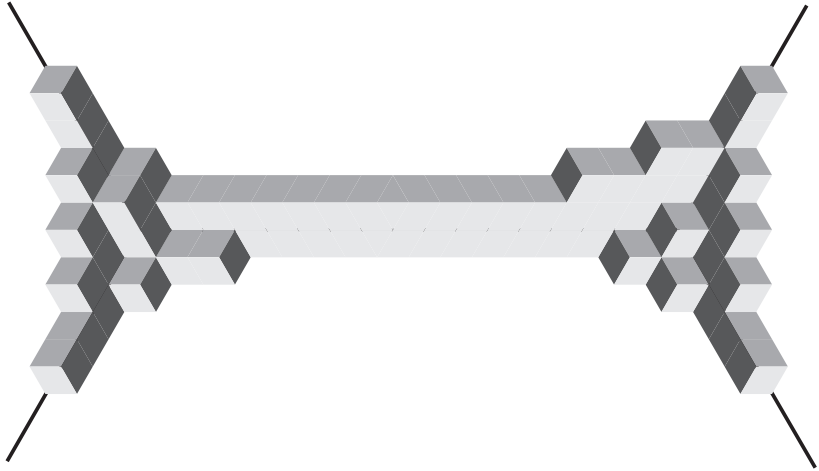}
 \caption{Two one-dimensional ideals glue on the overlap 
of two toric charts. The edge scheme $C_\be$ here is the line 
$\bL_\be$ doubled in one of the coordinate directions.}
\label{f_edge}
\end{figure}

\subsubsection{}

If we take just this partition $\lambda_\be$ running along $\bL_\be$,
we get a canonically defined subscheme 
$$
C \supset C_\be = 
\overline{C \cap \A^3_\bv \cap \A^3_{\bv'}} 
$$
where $\bv,\bv'$ are the two vertices joined by the edge $\be$. We
have
$$
\deg C_\be = |\lambda_\be | \deg \bL_e
$$
and 
$$
Q^{\deg C} = \prod_{\be \in \Delta(X)}  (Q^{\deg
  \bL_\be})^{|\lambda_\be|}
$$
is the $Q^d$ weight\footnote{Note that since the weight of one 
box is $(-z)$, one can interpret $\ln Q^{\deg \bL_e}/\ln(-z)$ as the
length of $\be$.}
of $C$ in partition functions \eqref{ZXD2} and 
\eqref{ZXDK}.

\subsection{Localization of DT counts}

\subsubsection{}
With the kinematics of $\bT$-fixed subschemes sorted out, we now
discuss the dynamics, that is, their weights in localization
formulas. Just like in \eqref{Tvirpi} we have 
$$
T^\vir_C  = \cT_X (\cI_C)
$$
where $\cI_C$ is the ideal sheaf of $C$ and where we denote 
$$
\cT_U(\cF) = \chi(\cO_U) - \chi(\cF,\cF)\,, 
$$
for a coherent sheaf  $\cF$  on an open set $U\subset X$.
 It is convenient to
have $\cT_U(\cF)$ defined on opens because e.g.\ 
 localization gives
\begin{equation} 
T^\vir_C = \sum_{\bv \in \Delta(X)} \cT_{\A^3_\bv}(I_{\pi_\bv}) \,.  \label{TvirLoc}
\end{equation}

Each term in \eqref{TvirLoc} may be computed by the formula 
\eqref{Tvirf}, where $\bG_{\pi_\bv}$ is now a rational function of 
$(t_1,t_2,t_3)$. Similarly, $\cT_{\A^3_\bv}(I_{\pi_\bv})$ is 
a rational function, and so the computation \eqref{TvirLoc} is really
taking place in the localized $\bT$-equivariant K-theory. The torus
$\bT$ has no fixed points on double intersections of the charts 
$\A^3_\bv$, which is why the corresponding terms are absent in
the sum\eqref{TvirLoc}. Their $\bT$-characters are torsion and vanish in the
localization. 

On the other hand, being a rational function makes 
$\cT_{\A^3_\bv}(I_{\pi_\bv})$ unsuitable as an argument 
for $\aroof(\, \cdot \,)$. To remedy this, we literally subtract the contributions of the
infinite legs as follows
\begin{equation}
\cT^\vtx_{\pi_\bv} = \cT_{\A^3_\bv}(I_{\pi_\bv})  - 
\sum_{\be} \cT_{\A^3_\bv}(I_{\be}) \label{defcTvtx}
\end{equation}
where the sum is over the edges $\be$ incident to $\bv$ and 
$I_\be= \cI_{C_\be}\big|_{\A^3_\bv}$. It is easy to see this 
\eqref{defcTvtx} is a 
\emph{polynomial} in $t$. 

By construction, this splits the virtual 
tangent space 
\begin{equation} 
T^\vir_C = 
\sum_{\be \in \Delta(X)} T^\vir_{C_\be} + 
\sum_{\bv \in \Delta(X)} \cT^\vtx_{\pi_\bv} 
\,. 
\label{Tvirvtx}
\end{equation}
into the contributions of the edges curves $C_\be$ and their
interaction at the vertices $\bv$.  This can be used to 
organize the localization formula as follows. 

\subsubsection{}
If $\lambda$ is a partition running along an edge $\be$, we define the
corresponding edge weight by 
$$
\bE_\be(\lambda) = (-z)^{\chi(\cO_{C_\be})} \, 
Q^{|\lambda| \deg L_\be} \,\,  \aroof\left(-  T^\vir_{C_\be}\right) \,. 
$$
There is an elementary arm-and-leg expression for 
$T^\vir_{C_\be}$ that depends on the normal bundle 
$N_{X/\bL_e}$. In
particular, $\bE_\be(\lambda)$ 
becomes the function $\bEN(\lambda,\lambda)$ from 
Section \ref{s_NT} when the normal bundle is
trivial. 

If $\lambda$, $\mu$, and $\nu$ are the partitions running along the
three edges incident to a vertex $\bv$ we define 
\begin{equation}
\bV_\bv(\lambda,\mu,\nu) = \sum_{\textup{$\pi$ ending on $(\lambda,\mu,\nu)$}}
(-z)^{|\pi|} \,\,\,  \aroof\left(- \cT^\vtx_\pi \right) \label{defVert}
\end{equation}
where $|\pi|$ is the regularized size of $\pi$, namely
$$
|\pi| = \chi(C_\pi)- \sum_\be \chi(C_\be) \,, 
$$
which may be negative. 

Note that \eqref{defVert} depends on the vertex $\bv$
only through the assignment of equivariant variables. In other words,
this is one universal function of 3 partition, 3 equivariant
variables, and the box-counting variable $z$. It has an
$S(3)$-symmetry that permutes/transposes partitions while
permuting the equivariant variables. 

\subsubsection{}

With this notation, we can write the localization formula 
for curve counts in $X$ as a partition function of a vertex model, in
which the degrees of freedom are partitions living on the edges of
$\Delta(X)$ and their interaction happens at vertices. 
Concretely,
\begin{equation}
  \label{ZDvert}
Z(X) = \sum_{\textup{all maps $\be\to \lambda_e$}}
\prod_\be \bE_\be (\lambda_\be) 
\prod_\bv \bV_\bv (\lambda_{\be_1}, \lambda_{\be_2},
\lambda_{\be_3}) 
\end{equation}
where $\be_i$ are the three edges incident to $\bv$. 

\subsubsection{}\label{s_top_vertex}

Formulas of the form \eqref{ZDvert} have a long history in the
subject, starting from the \emph{topological vertex} conjecture of
\cites{AKMV}. In there, Aganagic, Klemm, Mari\~no, and Vafa proposed a
formula, with the same structure, for GW counts in toric CY 
threefolds. The topological vertex of \cites{AKMV} is an explicit
expression with Schur functions inspired by a connection to knot invariants
from Section \ref{s_CSknots}. 

Note that in the Calabi-Yau case, we have
$$
T^\vir = - \left(T^\vir \right)^\vee
$$
equivariantly. Hence $\aroof(T^\vir) = \pm 1$ and the summation in \eqref{ZDvert} becomes pure
combinatorics of boxcounting. From a modern point of view, this
combinatorics had just been sorted out at the time 
in \cites{OR}, and so once one 
knew\footnote{What was noticed first was the equality of the limit
  shape of \cites{OR,CerfK} for 3d partions with the GW-mirror of $\C^3$. One
  can, in fact, get quite far by interpreting mirrors as limit
  shapes, see e.g.\ \cites{OkECM,OkAMS}.} there is a connection 
it was easy to see that 
$$
\textup{topological vertex} = \sum_{\textup{$\pi$ ending on $(\lambda,\mu,\nu)$}}
z^{|\pi|}\,, \quad z = e^{iu} \,. 
$$
This is the main point of \cites{ORV} and it 
was also explained there that boxcounting is related to the Hilbert
scheme of curves in 3-dimensions. An early discussion of a
3-dimensional analog of Nekrasov theory may be found in a related
paper \cites{Iqbal}. 

All of this was a very important inspiration for \cites{MNOP1,MNOP2}, where the
general GW/DT correspondence was proposed and where it was explained
how it specializes to the topological vertex formula for
toric CY threefolds.

\subsubsection{}

Formulas like \eqref{ZDvert} have an obvious gauge symmetry.
They can be seen as a contraction of certain tensors in 
$$
\Fock^{\otimes \textup{edges of $\Delta(X)$}}(t_1,t_2,t_3)
(\!(z)\!)
$$
where $\Fock$ is a vector space with a basis given by
partitions. Clearly, we are free to change the basis in any tensor
factor without affecting the result.

In \eqref{ZDvert}, the edge terms are explicit products and the whole
complexity sits at vertices. The \emph{capped} localization of
\cites{MOOP}, is a gauge transformation that spreads the complexity
more evenly. In capped 
localization, edge terms corresponds to deformations of $C_\be$ 
considered \emph{relative} to the two toric divisors at the endpoints of $\bL_\be$,
and similarly, the vertex terms are considered relative infinity
which may be modeled by the infinity of 
$$
\A^3 \subset (\bP^1)^3  \,. 
$$
The equivalence with \eqref{ZDvert} follows from the degeneration
formula of Section \ref{s_gluingDT}. 

\subsubsection{}

In capped localization, the edge terms are well-controlled rational
functions of $z$ that absorb some of the complexity from 
\begin{align}
  \label{capped_vertex1}
  \frac{\bV(\lambda,\mu,\nu)_\textup{capped}}{\bV(\varnothing, \varnothing,
  \varnothing)} &= \textup{a rational function of $z$, in fact} \\
& \overset{?}= \textup{a Laurent polynomial in $z$} \,. 
\label{capped_vertex2}
\end{align}
The rationality \eqref{capped_vertex1}, which is the analog of the
Conjecture \eqref{cr}, 
is proven in \cite{MOOP} along the lines that will be explained
shortly. In fact, 
one of our main 
goal in the rest of these notes is to
explain how one computes this function. 

The polynomiality in 
\eqref{capped_vertex1} remains a conjecture.

\subsubsection{}
Localization in PT theory takes a very similar shape, see \cites{PT2}. 
The fixed loci now have much fewer components, but may be not isolated
if $\lambda,\mu,\nu$ are all nonempty.

\subsubsection{}

The shape of localization formulas in GW theory is structurally
very similar. Let 
$$
f: C = \bigcup C_i \to X
$$
be a $\bT$-fixed stable map, where $\{C_i\}$ are irreducible
components of the source curve $C$. For each $C_i$, there are two
possibilities: 
\begin{itemize}
\item[---] if $C_i$ is not contracted by $f$, then $f|_{C_i}$ has the
  form
$$
C_i \cong \bP^1 \owns y \xrightarrow{\,\, f\,\,} y^{\mu_i} 
\in \bP^1 \cong \bL_\be
$$
for some edge $\be$ and some degree $\mu_i=1,2,\dots$. 
For every edge, these degrees form a partition $\mu_\be$. The
contribution of such $C_i$ to 
 the virtual tangent space is simple and explicit. 
\item[---] otherwise, $C_i$ is contracted by $f$ to a vertex
$\bv$. These components contribute an analog of \eqref{psilam}, but now with 3
Chern classes of the Hodge bundle. 
\end{itemize}

There is, similarly, a capped version of the localization available. The proof of the GW/DT correspondence for toric varieties \cites{MOOP}
really matches the capped GW vertices and edges to their cohomological
counterparts in DT theory. 

\begin{figure}[!htbp]
  \centering
   \includegraphics[scale=0.85]{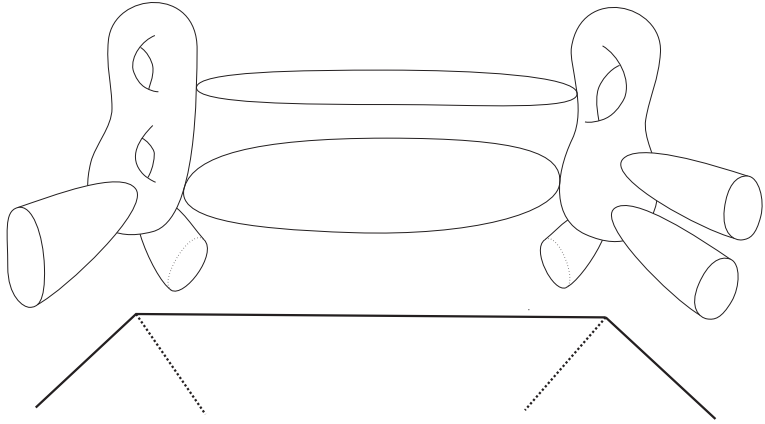}
 \caption{The shape of torus fixed stable maps to $X$, compare with
   Figures \ref{f_Hurw} and \ref{f_Ipi3}.}
\label{f_locGW}
\end{figure}

\subsection{$A_n$-geometries capture vertices}
\label{s_Ancap} 

\subsubsection{}
The vertex \eqref{defVert} is a basic building block of the theory and
a very important special function. One can view it as a tensor 
\begin{equation}
\bV \in (\Fock)^{\otimes 3} \otimes \Q(t) (\!( z )\!) \label{bVFock3}
\end{equation}
and contemplate determining it by the representation theory of a
suitable algebra acting in this linear space, like it was done in 
\cites{ADKMV,AFS} for the topological vertex, and the so called
refined topological vertex, which is also a (very) special case of
\eqref{defVert}. 

While this is a very important direction of current
research, it turns out to be easier to repackage this tensor
differently, as a certain different 3-valent tensor that can be
determined by representation theory of a certain quantum 
group. This group is $\cU_\hbar(\glhh_3)$, which is a quantum loop group
associated to the loop algebra $\glh_3$, see \cite{PCMI} for an introduction. There is, in fact, a sequence
of $n$-valent tensors that similarly correspond to $\glh_n$ for all
$n$. They correspond to 
\begin{equation}
X_n=A_{n-1} \times \A^1\,, \quad n=1,2,\dots\,,\label{eq:1}
\end{equation}
where $A_{n-1}$ is 
the toric symplectic surface we met previously in Section
\ref{s_min_res}.

The vertex  \eqref{bVFock3} has a nice $S(3)$-covariance, but 
the fact that its three legs point in three different coordinate
directions makes it more difficult for the three tensor factors in 
\eqref{bVFock3} to interact. 

\subsubsection{}
We have 
$$
A_1 = T^* \bP^1
$$
and the toric polyhedron $\Delta(X_2)$ is drawn in Figure \ref{f_A2}. 
\begin{figure}[!htbp]
  \centering
   \includegraphics[scale=0.6]{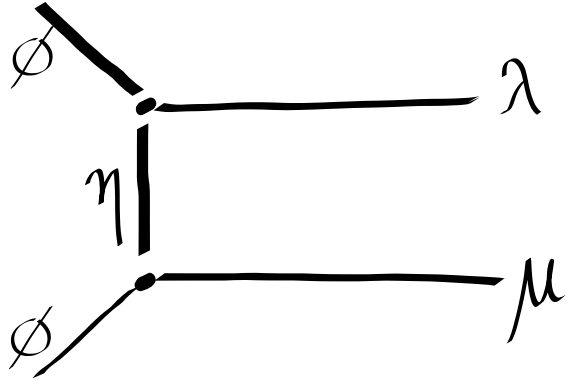}
 \caption{The $1$-skeleton of $\Delta(X_2)$ has a unique bounded 
edge and the corresponding partition function \eqref{ZX2} is a sum over
the unique intermediate partition $\eta$.}
\label{f_A2}
\end{figure}
We assign the partitions $\varnothing$, $\lambda$, $\mu$, and 
$\varnothing$ to the unbounded edges and consider
\begin{equation}
  \label{ZX2}
  \bZ(X_2,\lambda,\mu\, | \, z,Q) = \sum_\eta \bE(\eta) \, \bV(\varnothing,\mu,
\eta') \, \bV(\eta,\lambda,
\varnothing) \,. 
\end{equation}
where the new variable $Q$ enters through the  $Q^{|\eta|}$ 
part of the edge weight $\bE(\eta)$ in \eqref{ZX2}. The coefficient 
$Q^k$ thus restricts the summation to $|\eta|=k$. 

The sum \eqref{ZX2} may be seen as the partition function 
for $A_1 \times \bP^1$ with nonsingular boundary conditions imposed at
the divisor $A_1 \times \{\infty\}$. 

\begin{Lemma}[\cite{MOOP}] 
The function \eqref{ZX2} uniquely determines the vertices
with one of the 3 partitions empty. 
\end{Lemma}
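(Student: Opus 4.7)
The plan is to interpret \eqref{ZX2} as a bilinear pairing of Fock-space vectors built from one-empty-leg vertices, and then invert the pairing by a triangularity argument in the leading $(-z)$-behavior.

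First I would use the $S(3)$-symmetry of $\bV$: any vertex with one empty leg equals $\bV(\eta,\lambda,\varnothing)$ up to a permutation of the $t_i$'s and transposition of the partitions, so it suffices to recover the single function $A_{\eta,\lambda}:=\bV(\eta,\lambda,\varnothing)$. Both factors in \eqref{ZX2} are then instances of $A$: the second factor $\bV(\varnothing,\mu,\eta')$ is a known $S(3)$-image $\tilde A_{\eta,\mu}$. In matrix form, \eqref{ZX2} reads
$$
\bZ(X_2,\lambda,\mu\mid z,Q) \;=\; \sum_{\eta}\bE(\eta)\, A_{\eta,\lambda}\, \tilde A_{\eta,\mu}\;=\;\bigl(A^{t} E \,\tilde A\bigr)_{\lambda,\mu},
$$
with $E$ diagonal in $\eta$ and carrying the factor $Q^{|\eta|\deg L_\be}$ coming from $\bE(\eta)$.

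Since $\bE(\eta)$ contributes $Q^{|\eta|\deg L_\be}$, the expansion $\bZ=\sum_k Q^{k\deg L_\be}\,Z_k(\lambda,\mu;z)$ cleanly isolates the sum over $|\eta|=k$:
$$
Z_k(\lambda,\mu;z) \;=\; \sum_{|\eta|=k}\bE'(\eta;z)\, A_{\eta,\lambda}\, \tilde A_{\eta,\mu},
$$
where $\bE'$ is the $Q$-independent part of $\bE$. The base case $k=0$ is trivial: only $\eta=\varnothing$ contributes, and specializing first $\lambda=\mu=\varnothing$ and then $\mu=\varnothing$ alone recovers $\bV(\varnothing,\varnothing,\varnothing)$ followed by all $\bV(\varnothing,\lambda,\varnothing)$ (up to an overall sign fixed by the leading $(-z)$-power of the empty $3d$ partition). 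For $k\ge 1$ one must solve the above bilinear system for the $p(k)$ unknown vectors $\{A_{\eta,\cdot}\}_{|\eta|=k}$ simultaneously as $\lambda$ and $\mu$ vary.

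The main obstacle is showing that this bilinear pairing uniquely determines its factors. My plan is to analyze the leading term of $A_{\eta,\lambda}$ in the $(-z)$-expansion: it comes from the unique minimal $3d$ partition $\pi_{\min}(\eta,\lambda)$ with legs $(\eta,\lambda,\varnothing)$, whose $\aroof$-weight is a nonzero rational function of the $t_i$. Ordering the partitions $\eta$ of a given size by the regularized size $|\pi_{\min}(\eta,\eta)|$, refined by dominance, the family $\eta\mapsto A_{\eta,\cdot}$ becomes triangular with invertible diagonal at its leading $(-z)$-order. This triangularity, combined with the known $S(3)$-relation between $A$ and $\tilde A$, makes $Z_k=A^{t}E\,\tilde A$ uniquely solvable at each $k$, completing the recovery of all $\bV(\eta,\lambda,\varnothing)$ and hence of every vertex with one empty leg.
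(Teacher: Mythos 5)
Your setup is close to the paper's, but you miss the key simplification that makes the argument work: the induction linearizes the system. In the paper one assumes, by $S(3)$-symmetry, that $|\eta|\le|\lambda|$ and inducts on $k=|\eta|$. The crucial point is that for $|\mu|<k$, the factor $\bV(\varnothing,\mu,\eta')$ in the $Q^k$-coefficient of \eqref{ZX2} is a one-empty-leg vertex whose \emph{smaller} nonempty leg has size $|\mu|<k$, so it is already determined by the induction hypothesis (up to an $S(3)$-move). Restricting attention to equations indexed by such $\mu$ turns the problem into a \emph{linear} system for the unknowns $\{\bV(\eta,\lambda,\varnothing)\}_{|\eta|=k}$ with known coefficient matrix $\left(\bV(\varnothing,\mu,\eta')\right)_{|\mu|<k,\,|\eta|=k}$, and one only needs this matrix to have maximal rank $p(k)$. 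That rank check is a single concrete fact, and it can be done at the topological vertex specialization, where it was verified in \cite{MOOP}.

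You instead keep both factors as unknowns and try to invert a bilinear (effectively quadratic, via the $S(3)$-relation between $A$ and $\tilde A$) pairing by a triangularity claim in the leading $(-z)$-order. That claim is the crux of your argument and is not established: the ``unique minimal $3d$ partition'' heuristic does not obviously produce a triangular structure on the $p(k)\times p(k)$ (or larger) system, the leading orders of $\bV(\eta,\lambda,\varnothing)$ as $\eta$ and $\lambda$ vary need not be strictly ordered in the way you want, and uniqueness of a bilinear factorization is in general much more delicate than surjectivity of a known linear map. Even if a triangularity statement could be salvaged, it would amount to reproving, in a harder way, the rank statement that the induction makes both cleaner and already available from \cite{MOOP}. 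So the gap is real: you are missing the observation that half of the unknowns in your pairing are already determined at step $k$, which is what replaces the bilinear inversion by a linear one and makes the proof go through.
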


\begin{proof}
Suppose we want to compute $\bV(\eta,\lambda,
\varnothing)$. By symmetry and induction, we can assume $|\eta| \le
|\lambda|$. Viewing \eqref{ZX2} as linear equations on the unknowns 
$\{\bV(\eta,\lambda,
\varnothing)\}$, it suffices to check that the matrix
$$
\left(\bV(\varnothing,\mu,
\eta')\right)_{|\mu|<k\,, |\eta|=k}
$$
has maximal rank for $k=1,2,\dots$. It is enough to check this for the
topological vertex specialization, and this was done in \cite{MOOP}. 
\end{proof}

\subsubsection{}
It is now clear that the same argument applied to 
$$
X_3 = A_2 \times \A^1
$$
will capture the full 3-valent vertex with one of the outgoing edges
equal to $\mu$. 
\begin{figure}[!htbp]
  \centering
   \includegraphics[scale=0.6]{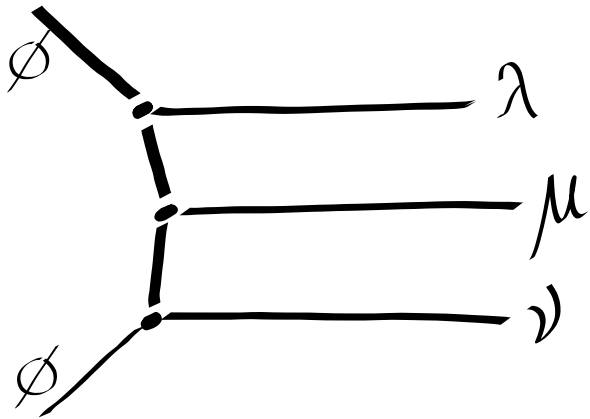}
 \caption{The topic polytope for $X_3$.}
\label{f_A3}
\end{figure}
As a result, the function $\bZ(X_3,\lambda,\mu,\nu)$ contains 
$\bV$ in an effective way, from which properties like 
rationality in $z$ may be concluded. 

Clearly, $\bZ(X_3,\lambda,\mu,\nu)$ a richer function since it
depends on $Q_1$ and $Q_2$ for the two curve classes in $A_2$. 
It is therefore remarkable that this function, and its $n$-valent 
generalization for $X_n$ may be described explicitly in the language 
of geometric representation theory. 

\section{DT counts in ADE fibrations} \label{s_actual} 

\subsection{Quasimaps again} 

\subsubsection{}
The ADE surfaces $S$, minimal resolutions of 
$$
\textup{ADE singularities} = \A^2 / \Gamma \,, 
$$
where $\Gamma \subset SL_2(\C)$ is a finite subgroup, are some of the
nicest objects in the theory of surfaces and very often make an
appearance in other parts of mathematics. 

What unites them from our
point of view is that their Hilbert schemes are Nakajima varieties
\cite{Nakq1}. Therefore if $X$ is an ADE fibration
\begin{equation}
  \label{fibrS}
\xymatrix{
  S \ar@{^{(}->}[r]  & X
\ar[d]^\pi \\
& B 
} \,, 
\end{equation}
like what we have already seen in Section \ref{s_min_res}, then 
one can study quasimap sections of the corresponding 
fibration\footnote{Normally, what we denote by 
$\Hilb(\pi)$ is called the relative Hilbert scheme of $\pi$, but the
meaning of the world \textsl{relative} has been reassigned in DT theory.} of 
the Hilbert schemes 
\begin{equation}
  \label{fibrHilbS}
\xymatrix{
  \Hilb(S) \ar@{^{(}->}[r]  & \Hilb(\pi) 
\ar[d] \\
& B \ar@/_2pc/_f[u]
} \,, 
\end{equation}
like we did in Section \ref{s_fibrHilb}. 

\subsubsection{}

If $S=\A^2$ then these quasimap moduli spaces are the PT moduli
spaces. In general, they are even more economical than the PT moduli
spaces and, by transitivity, much more economical than the Hilbert
schemes of curves in $X$. 

Quasimap counting 
 also has a noticeable technical 
advantage in how it can handle sheaves of higher rank on $X$. Recall from 
Section \ref{s_min_res} 
that the moduli of rank $r$ torsion free sheaves on $S$ are also 
Nakajima varieties, and so one can just count quasimaps to those. 

Counting quasimaps in \eqref{fibrHilbS} is just another flavor of DT
theory of X and is expected to be related to other counts by a simple 
wall-crossing like \eqref{DTPT}. In cohomology, this comes out 
for free from the results of \cites{MOOP,MObl}, and situation in 
equivariant K-theory should be similar. 

\subsubsection{}

Since it is easier to work with quasimaps, and K-theoretic counts are
complicated enough, so far people focused on doing them for
quasimaps. There is now a satisfactory general enumerative theory of
quasimaps to general Nakajima varieties, see \cite{PCMI,slc} for an
introduction and overview. 

\subsubsection{}

The base $B$ in \eqref{fibrS} is a fixed nonsingular curve. By
deformation invariance and gluing, the DT counts relative the
fibers of \eqref{fibrS}, that is, the assignment 
\begin{equation}
(B,\{b_i\})  \mapsto \bZ(X/D)\,, \quad D=\pi^{-1}(\{b_i\})\label{Bbi}
\end{equation}
defines a  2-dimensional 
TQFT and, more generally, a K-theoretic analog of a cohomological
field theory if the curve $B$ is allowed to move in families. 

The space of boundary 
conditions in this TQFT is 
\eqref{Fock} in cohomology and 
\begin{equation}
  \label{FockK} 
 K_{\Aut(S)}(\Hilb(S)) = \bigoplus_k K_{\Aut(S)}(\Hilb(S),k) 
\end{equation} 
in equivariant $K$-theory. 

\subsubsection{}

More precisely, this is a TQFT that is further enriched to keep
track of the topology of the bundle in \eqref{fibrS} which is 
labelled by a cocharacter
\begin{equation}
\sigma: \Ct \to \Aut(S)  \,. \label{sigma_cochar}
\end{equation}
In degeneration, the bundle degree may be split arbitrarily between 
the components.  

It is convenient to treat bundle topology
as the action of commuting \emph{shift operators}  
indexed by $\sigma$. In higher rank, there are similar shift
operators related to the framing automorphisms of Nakajima 
varieties.

\subsubsection{}

There is a simple Lie algebra $\fg_\ADE$ associated to $S$ and 
the corresponding affine Lie algebra
\begin{equation}
\fg_\KM  = \textup{central extension of  } \fg_\ADE[x^{\pm
  1}]\label{g_KM}
\end{equation}
is an example of a Kac-Moody Lie algebra. 
By the work of Nakajima \cite{Nak3}, \eqref{FockK} is a module 
for $\cU_\hbar(\gh_\KM)$, which is a quantum loop group
algebra associated to the Kac-Moody 
Lie algebra \eqref{g_KM}.  

Note that we are dealing here with quantum \emph{double} loop
symmetries, something that has many more layers of
representation-theoretic complexity than the affine Lie algebra or 
the ordinary (non-loop) quantum groups of the CS theory. 

\subsubsection{}

{} From a geometric R-matrix perspective, an abstract Lie 
algebra $\fg_\MO$ was associated in \cite{MO} to any quiver so that 
$\cU_\hbar(\gh_\MO)$ acts on equivariant K-theories of 
Nakajima quiver varieties, see \cite{OS,PCMI}. 
This extends the action of $\cU_\hbar(\gh_\KM)$ constructed by
Nakajima in \cite{Nak3}. 

For quivers of affine ADE type, the difference between the two Lie
algebras is a single Heisenberg algebra $\glh_1$. For example 
$$
\fg_\KM  = \slh_{n+1}  \subset \glh_{n+1} = \fg_\MO
$$
for quivers of affine $A_n$-type that correspond to 
Hilbert schemes of $A_n$-surfaces.  It is the action of 
$\cU_\hbar(\gh_\MO)$ that will be required to do the quasimap
counts and the main motivation for \cite{MO} were precisely
enumerative applications. 

\subsection{Representation theory answers}

\subsubsection{}

{} From the TQFT principles, it suffices to do the counts \eqref{Bbi} 
for $B=\bP^1$ with at most 3 marked points $\{b_1,\dots,b_n\}$. 
The counts with $n\le 2$ marked points can be done 
equivariantly with respect to $\Aut(B,\{b_i\})$. In fact, this makes
$\le 2$-point counts so rich that they 
determine all other ones by elementary operations. 

Recall from Section \ref{s_boundary} that we have several options for
the boundary conditions to impose at $\pi^{-1}(\{b_i\})$. 
Namely, we can impose the nonsingular, relative, and descendent 
boundary conditions, see Section \ref{s_flavors}. 
 Counts with 2-marked points and different flavors
of boundary conditions provide the dictionary between the insertions
as in  Section \ref{s_corresp}.  

Different flavors of insertions results in objects of different
geometric, analytic (as functions of parameters), and
representation-theoretic flavors. We now list 
various possibilities, first taking the $S$-bundle in 
\eqref{fibrS} to be trivial. 

\subsubsection{One nonsingular point}\label{s_1ns}

This is the count of Section \ref{s_Ancap}, le raison d'etre of this
section. By change of the boundary condition, it corresponds to: 

\subsubsection{One relative point}

This count is trivial, in the sense that this is identity element (structure
sheaf $\cO_{\Hilb}$) in \eqref{FockK}, up to 
normalizations. 

While 
the count is trivial, this triviality is in itself highly 
  nontrivial as it involves the vanishing of many quantum corrections
  that are not constrained to vanish for simple reasons like
  dimension. Here, we have an instance of what was called 
\emph{large rank vanishing} in \cite{PCMI}, something which also holds
for counts of Section \ref{s_rel_desc} once the rank becomes
large compared to the size of the descendent insertion. 

What relates this simple answer to Section \ref{s_1ns} is the
following change of basis
matrix: 

\subsubsection{Two points: one relative, one nonsingular}
\label{1rel1ns}

This is a fundamental solutions of a certain system of 
$q$-difference equations in all other variables. Here 
$$
q \in \Ct = \Aut(B, \{b_1,b_2\}) 
$$
and the equations are written using the operators of 
$$
\cU_\hbar = \cU_\hbar(\gh_\MO)\,,
$$
 see \cite{OS, PCMI, slc}. 


The equations in the variables $z$ and $Q$ found in 
\cite{OS} may be 
described as a certain deformation of the action of what may be 
called the affine Weyl group of $\cU_\hbar$. 
Beware, however, that in the non-Kac-Moody situation we are
really talking not about a group but about the fundamental groupoid of
a certain hyperplane 
arrangement, namely the arrangement associated to the roots of
$\gh_\MO$.  This arrangement is very far from being the 
arrangement of reflection hyperplanes for a discrete reflection
group. In particular, its
symmetries, which apriori include only a lattice of translations and
the reflection about
the origin,  do not act transitively on the 
set of alcoves.  

For instance, for $\Hilb(\A^2,k)$, this is 
a $1$-dimensional arrangement with hyperplanes
$m x \in \Z$ for $m=1,\dots,k$, where $x$ is the coordinate. 
 This kind of combinatorics
replaces Weyl group symmetries in the subject, see \cite{slc} for a
discussion. 

A fundamental solution to a $q$-difference 
equations may be interpreted as a (regularized) infinite product of 
certain wall-crossing operators that cross all walls between a certain 
alcove and a point at infinity.

\subsubsection{Two points: one relative, one descendent}
\label{s_rel_desc}

There are explicit answers for these counts, too, see
\cites{AObethe,Sm1}. By general principles, they give a dictionary 
between relative and descendent insertions, something which is very
useful for accessing DT counts in general threefolds.  

They also 
related the subject of Bethe Ansatz for $\cU_\hbar$, 
because they relate the 
counts of Section \ref{1rel1ns} to the following: 

\subsubsection{Two points: one descendent, one nonsingular}
\label{1desc1ns}

These counts may be given by an explicit Mellin-Barnes-type intergral
because the $q$-fixed quasimaps are GIT quotients, see \cite{afo}. 
Integral solutions for $q$-difference equations turn into formulas
for eigenfunctions in the $q\to 1$ limit and this is the subject 
of Bethe Ansatz for $\cU_\hbar$. Geometric considerations solve the
problem completely, giving, in particular,
 a concise explicit formula for general off-shell Bethe
 eigenfunction, see \cites{AObethe}. 

Of course, the corresponding \emph{eigenvalues} were understood long
time ago by Nekrasov and Shatashvili in \cites{NS1,NS2}. That very
influential work was what stimulated the interest in quasimap counts in the
first place, see e.g.\ \cites{MO} for a longer discussion. 

\subsubsection{Two point, both relative, nontrivial bundle}

This is the shift operator corresponding to the degree $\sigma$ of the
bundle in \eqref{sigma_cochar}. They give the $q$-difference equations 
from Section \ref{1rel1ns} in the corresponding equivariant
variables. 

These operators, as well as the difference equations in $z$ and $Q$,  
are all found from the their consistency with 
$q$-difference equations in variables $a_i$ that appear
in higher rank counts exactly as they appeared in \eqref{GTA}. 
The difference equations in $a_i$ are 
identified as the quantum Knizhnik-Zamolodchikov 
equations \cite{FrenResh} for $\cU_\hbar$ in  \cite{PCMI}, and this is
really the point through which the powerful machinery of geometric
representations theory enters the world of counting.


\begin{bibdiv}
	\begin{biblist}

%

\bibitem{afo} 
M.~Aganagic, E.~Frenkel, and A. Okounkov, 
\emph{Quantum $q$-Langlands Correspondence}, 
\texttt{arXiv:1701.03146}. 

\bibitem{AO}
M.~Aganagic and A.~Okounkov,
\emph{Elliptic stable envelopes}, 
\texttt{arXiv:1604.00423}. 

\bibitem{AObethe}
\bysame,
\emph{Quasimap counts and Bethe eigenfunctions}, 
\texttt{arXiv:1704.08746}.

\bibitem{AO2}
\bysame,
in preparation.

\bib{AKMV}{article}{
   author={Aganagic, M.},
   author={Klemm, A.},
   author={Mari\~no, M.},
   author={Vafa, C.},
   title={The topological vertex},
   journal={Comm. Math. Phys.},
   volume={254},
   date={2005},
   number={2},
   pages={425--478},
   issn={0010-3616},
}

\bib{ADKMV}{article}{
   author={Aganagic, Mina},
   author={Dijkgraaf, Robbert},
   author={Klemm, Albrecht},
   author={Mari\~no, Marcos},
   author={Vafa, Cumrun},
   title={Topological strings and integrable hierarchies},
   journal={Comm. Math. Phys.},
   volume={261},
   date={2006},
   number={2},
   pages={451--516},
}

\bib{AtTQFT}{article}{
   author={Atiyah, M.},
   title={Topological quantum field theories},
   journal={Inst. Hautes \'Etudes Sci. Publ. Math.},
   number={68},
   date={1988},
   pages={175--186 (1989)},
   issn={0073-8301},
}

\bib{AFS}{article}{
   author={Awata, H.},
   author={Feigin, B.},
   author={Shiraishi, J.},
   title={Quantum algebraic approach to refined topological vertex},
   journal={J. High Energy Phys.},
   date={2012},
   number={3},
   pages={041, front matter+34},
}

\bib{MMM}{article}{
   author={Bagger, J.},
   author={Lambert, N.},
   author={Mukhi, S.},
   author={Papageorgakis, C.},
   title={Multiple membranes in M-theory},
   journal={Phys. Rep.},
   volume={527},
   date={2013},
   number={1},
   pages={1--100},
   issn={0370-1573},
}
\bib{BakKir}{book}{
   author={Bakalov, B.},
   author={Kirillov Jr., A.},
   title={Lectures on tensor categories and modular functors},
   series={University Lecture Series},
   volume={21},
   publisher={American Mathematical Society, Providence, RI},
   date={2001},
   pages={x+221},
}
\bib{Bakk}{article}{
   author={Bakker, B.},
   author={Jorza, A.},
   title={Higher rank stable pairs on K3 surfaces},
   journal={Commun. Number Theory Phys.},
   volume={6},
   date={2012},
   number={4},
   pages={805--847},
   issn={1931-4523},
}

\bibitem{BF}
K.~Behrend and B.~Fantechi, 
\emph{The intrinsic normal cone}, 
Invent.\ Math.\ 128 (1997), no.~1, 45--88.

\bibitem{BHOO}
J.~de Boer, K.~Hori, H.~Ooguri, Y.~Oz, 
\emph{Mirror symmetry in
three-dimensional gauge theories, quivers and D-branes}, 
Nuclear Phys.\ B \textbf{493}
(1997), no. 1-2, 101--147.

\bibitem{BLPW1}
T.~Braden, A.~Licata, N.~Proudfoot, B.~Webster, 
\emph{Gale duality and Koszul duality}, 
Adv.\ Math.\ \textbf{225} (2010), no.~4, 2002--2049. 

\bibitem{BLPW2}
T.~Braden, A.~Licata, N.~Proudfoot, B.~Webster, 
\emph{Quantizations of conical symplectic resolutions II: category
  $\mathcal O$ and symplectic duality},
\texttt{arXiv:1407.0964}. 

\bibitem{NakC2}
A.~Braverman, M.~Finkelberg, H.~Nakajima,
\emph{Towards a mathematical definition of Coulomb branches of
  $3$-dimensional $\mathcal N=4$ gauge theories, II}, 
\texttt{arXiv:1601.03586}. 

\bibitem{NakC3}
A.~Braverman, M.~Finkelberg, H.~Nakajima,
\emph{Coulomb branches of $3d$ $\mathcal N=4$ quiver gauge theories
  and slices in the affine Grassmannian}, 
\texttt{arXiv:1604.03625}. 

\bib{Br1}{article}{
   author={Bridgeland, T.},
   title={Stability conditions on triangulated categories},
   journal={Ann. of Math. (2)},
   volume={166},
   date={2007},
   number={2},
   pages={317--345},
   issn={0003-486X},
}

\bibitem{Br2}
\bysame, 
\emph{Spaces of stability conditions}, 
Algebraic geometry, Seattle 2005, Part 1,
1--21, AMS, 2009.

\bibitem{BrAMS}
\bysame, 
\emph{Hall algebras and Donaldson-Thomas invariants},
\texttt{arXiv:1611.03696}.

\bib{BrPand}{article}{
   author={Bryan, J.},
   author={Pandharipande, R.},
   title={The local Gromov-Witten theory of curves},
   note={With an appendix by Bryan, C. Faber, A. Okounkov and
   Pandharipande},
   journal={J. Amer. Math. Soc.},
   volume={21},
   date={2008},
   number={1},
   pages={101--136},
   issn={0894-0347},
}

\bibitem{BDG}
M.~Bullimore, T.~Dimofte, and D.~Gaiotto,
\emph{The Coulomb Branch of 3d $\mathcal{N}=4$ Theories}, 
\texttt{arXiv:1503.04817}. 

\bibitem{BDGH}
M.~Bullimore, T.~Dimofte, D.~Gaiotto, and J.~Hilburn,
\emph{Boundaries, Mirror Symmetry, and Symplectic Duality in 3d
  $\mathcal{N}=4$ Gauge Theory}, 
\texttt{arXiv:1603.08382}. 

\bibitem{CG}
N.~Chriss and 
V.~Ginzburg, 
\emph{Representation theory and complex geometry}, 
Birkh\"auser, 
Boston, 2010. 

\bibitem{CKM}
I.~Ciocan-Fontanine, B.~Kim, and D.~Maulik, 
\emph{Stable quasimaps to GIT quotients},
 J.\ Geom.\ Phys.\ \textbf{75} (2014), 17--47.

\bib{FritzC}{article}{
   author={Carlson, F.},
   title={\"Uber Potenzreihen mit ganzzahligen Koeffizienten},
   language={German},
   journal={Math. Z.},
   volume={9},
   date={1921},
   number={1-2},
   pages={1--13},
   issn={0025-5874},
}
\bib{CarOk}{article}{
   author={Carlsson, E.},
   author={Okounkov, A.},
   title={Exts and vertex operators},
   journal={Duke Math. J.},
   volume={161},
   date={2012},
   number={9},
   pages={1797--1815},
   issn={0012-7094},
}

\bib{CerfK}{article}{
   author={Cerf, R.},
   author={Kenyon, R.},
   title={The low-temperature expansion of the Wulff crystal in the 3D Ising
   model},
   journal={Comm. Math. Phys.},
   volume={222},
   date={2001},
   number={1},
   pages={147--179},
   issn={0010-3616},
}
\bib{DonGIT}{article}{
   author={Donaldson, S. K.},
   title={Instantons and geometric invariant theory},
   journal={Comm. Math. Phys.},
   volume={93},
   date={1984},
   number={4},
   pages={453--460},
   issn={0010-3616},
}
\bib{DonKr}{book}{
   author={Donaldson, S. K.},
   author={Kronheimer, P. B.},
   title={The geometry of four-manifolds},
   series={Oxford Mathematical Monographs},
   note={Oxford Science Publications},
   publisher={The Clarendon Press, Oxford University Press, New York},
   date={1990},
   pages={x+440},
}
\bib{DonTh}{article}{
   author={Donaldson, S. K.},
   author={Thomas, R. P.},
   title={Gauge theory in higher dimensions},
   conference={
      title={The geometric universe},
      address={Oxford},
      date={1996},
   },
   book={
      publisher={Oxford Univ. Press, Oxford},
   },
   date={1998},
   pages={31--47},
}
\bib{FP}{article}{
   author={Faber, C.},
   author={Pandharipande, R.},
   title={Hodge integrals and Gromov-Witten theory},
   journal={Invent. Math.},
   volume={139},
   date={2000},
   number={1},
   pages={173--199},
   issn={0020-9910},
}

\bibitem{FGoe}
B.~Fantechi and L.~G\"ottsche, 
\emph{Riemann-Roch theorems and elliptic genus
 for virtually smooth schemes}, 
Geom.\ Topol.\ \textbf{14} (2010), no.~1, 83--115. 

\bib{FDA}{collection}{
   author={Fantechi, Barbara},
   author={G{\"o}ttsche, Lothar},
   author={Illusie, Luc},
   author={Kleiman, Steven L.},
   author={Nitsure, Nitin},
   author={Vistoli, Angelo},
   title={Fundamental algebraic geometry},
   series={Mathematical Surveys and Monographs},
   volume={123},
   note={Grothendieck's FGA explained},
   publisher={American Mathematical Society, Providence, RI},
   date={2005},
   pages={x+339},
}

\bib{FrenResh}{article}{
   author={Frenkel, I.},
   author={Reshetikhin, N.},
   title={Quantum affine algebras and holonomic difference equations},
   journal={Comm. Math. Phys.},
   volume={146},
   date={1992},
   number={1},
   pages={1--60},
}

\bibitem{Giv1}
A.~Givental, 
\emph{On the WDVV equation in quantum K-theory}, 
Michigan Math.\ J.\ \textbf{48} (2000), 295--304. 

\bib{GopVafa}{article}{
   author={Gopakumar, R.},
   author={Vafa, C.},
   title={On the gauge theory/geometry correspondence},
   journal={Adv. Theor. Math. Phys.},
   volume={3},
   date={1999},
   number={5},
   pages={1415--1443},
}

\bib{GP}{article}{
   author={Graber, T.},
   author={Pandharipande, R.},
   title={Localization of virtual classes},
   journal={Invent. Math.},
   volume={135},
   date={1999},
   number={2},
   pages={487--518},
   issn={0020-9910},
}

\bib{mirror_book}{book}{
   author={Hori, K.},
   author={Katz, S.},
   author={Klemm, A.},
   author={Pandharipande, R.},
   author={Thomas, R.},
   author={Vafa, C.},
   author={Vakil, R.},
   author={Zaslow, E.},
   title={Mirror symmetry},
   series={Clay Mathematics Monographs},
   volume={1},
   publisher={American Mathematical Society, Providence, RI; Clay
   Mathematics Institute, Cambridge, MA},
   date={2003},
   pages={xx+929},
}

\bib{Iqbal}{article}{
   author={Iqbal, A.},
   author={Vafa, C.},
   author={Nekrasov, N.},
   author={Okounkov, A.},
   title={Quantum foam and topological strings},
   journal={J. High Energy Phys.},
   date={2008},
   number={4},
   pages={011, 47},
   issn={1126-6708},
}

\bibitem{IS}
K.~Intriligator and N.~Seiberg, 
\emph{Mirror symmetry in three-dimensional gauge theories},
Phys.\ Lett.\ B  \textbf{387} (1996), no. 3, 513--519.

\bib{Jones}{article}{
   author={Jones, Gareth A.},
   title={Characters and surfaces: a survey},
   conference={
      title={The atlas of finite groups: ten years on},
      address={Birmingham},
      date={1995},
   },
   book={
      series={London Math. Soc. Lecture Note Ser.},
      volume={249},
      publisher={Cambridge Univ. Press, Cambridge},
   },
   date={1998},
   pages={90--118},
}

\bibitem{Kap}
A.~Kapustin, 
\emph{Topological field theory, higher categories, and 
their applications}, 
Proceedings of the ICM,
 Hindustan Book Agency, New Delhi, 2010, 2021--2043.

\bib{Katz_eng}{article}{
   author={Katz, S.},
   author={Klemm, A.},
   author={Vafa, C.},
   title={Geometric engineering of quantum field theories},
   journal={Nuclear Phys. B},
   volume={497},
   date={1997},
   number={1-2},
   pages={173--195},
   issn={0550-3213},
}

\bib{KhRos}{article}{
   author={Khesin, B.},
   author={Rosly, A.},
   title={Polar homology},
   journal={Canad. J. Math.},
   volume={55},
   date={2003},
   number={5},
   pages={1100--1120},
   issn={0008-414X},
}

\bib{Kohno}{book}{
   author={Kohno, T.},
   title={Conformal field theory and topology},
   series={Translations of Mathematical Monographs},
   volume={210},
   note={Translated from the 1998 Japanese original by the author;
   Iwanami Series in Modern Mathematics},
   publisher={American Mathematical Society, Providence, RI},
   date={2002},
   pages={x+172},
}

\bib{Koll}{book}{
   author={Koll{\'a}r, J.},
   title={Rational curves on algebraic varieties},
   series={Ergebnisse der Mathematik und ihrer Grenzgebiete. 3. Folge. A
   Series of Modern Surveys in Mathematics [Results in Mathematics and
   Related Areas. 3rd Series. A Series of Modern Surveys in Mathematics]},
   volume={32},
   publisher={Springer-Verlag, Berlin},
   date={1996},
   pages={viii+320},
}

\bib{KS}{article}{
   author={Kontsevich, M.},
   author={Soibelman, Y.},
   title={Motivic Donaldson-Thomas invariants: summary of results},
   conference={
      title={Mirror symmetry and tropical geometry},
   },
   book={
      series={Contemp. Math.},
      volume={527},
      publisher={Amer. Math. Soc., Providence, RI},
   },
   date={2010},
   pages={55--89},
}

\bib{Lehn}{article}{
   author={Lehn, M.},
   title={Lectures on Hilbert schemes},
   conference={
      title={Algebraic structures and moduli spaces},
   },
   book={
      series={CRM Proc. Lecture Notes},
      volume={38},
      publisher={Amer. Math. Soc., Providence, RI},
   },
   date={2004},
   pages={1--30},
}

\bibitem{LP}
M.~Levine and R.~Pandharipande, 
\emph{Algebraic cobordism revisited}, 
 Invent.\ Math.\ \textbf{176} (2009), no.~1, 63--130.

\bib{Li1}{article}{
   author={Li, Jun},
   title={A degeneration formula of GW-invariants},
   journal={J. Differential Geom.},
   volume={60},
   date={2002},
   number={2},
   pages={199--293},
   issn={0022-040X},
}

\bibitem{LiWu}
J.~Li and B.~Wu, 
\emph{Good degeneration of Quot-schemes and coherent systems}, 
Comm.\ Anal.\ Geom.\ \textbf{23} (2015), no.~4, 841--921.

\bib{MacdS}{article}{
   author={Macdonald, I. G.},
   title={Symmetric products of an algebraic curve},
   journal={Topology},
   volume={1},
   date={1962},
   pages={319--343},
   issn={0040-9383},
}

\bib{MNOP1}{article}{
   author={Maulik, D.},
   author={Nekrasov, N.},
   author={Okounkov, A.},
   author={Pandharipande, R.},
   title={Gromov-Witten theory and Donaldson-Thomas theory. I},
   journal={Compos. Math.},
   volume={142},
   date={2006},
   number={5},
   pages={1263--1285},
   issn={0010-437X},
}

\bib{MNOP2}{article}{
   author={Maulik, D.},
   author={Nekrasov, N.},
   author={Okounkov, A.},
   author={Pandharipande, R.},
   title={Gromov-Witten theory and Donaldson-Thomas theory. II},
   journal={Compos. Math.},
   volume={142},
   date={2006},
   number={5},
   pages={1286--1304},
   issn={0010-437X},
}

\bib{MObl}{article}{
   author={Maulik, D.},
   author={Oblomkov, A.},
   title={Quantum cohomology of the Hilbert scheme of points on $\scr
   A_n$-resolutions},
   journal={J. Amer. Math. Soc.},
   volume={22},
   date={2009},
   number={4},
   pages={1055--1091},
   issn={0894-0347},
}

\bibitem{MOOP}
D.~Maulik, A.~Oblomkov, A.~Okounkov, R.~Pandharipande, 
\emph{Gromov-Witten/Donaldson-Thomas correspondence for toric 3-folds}, 
Invent.\ Math.\ 186 (2011), no.~2, 435--479. 

\bibitem{MO}
D.~Maulik and A.~Okounkov, 
\emph{Quantum Groups and Quantum Cohomology}, 
\texttt{arXiv:1211.1287}.

\bib{Nakq1}{article}{
   author={Nakajima, H.},
   title={Instantons on ALE spaces, quiver varieties, and Kac-Moody
   algebras},
   journal={Duke Math. J.},
   volume={76},
   date={1994},
   number={2},
   pages={365--416},
   issn={0012-7094},
}
\bib{NakHart}{article}{
   author={Nakajima, H.},
   title={Heisenberg algebra and Hilbert schemes of points on projective
   surfaces},
   journal={Ann. of Math. (2)},
   volume={145},
   date={1997},
   number={2},
   pages={379--388},
   issn={0003-486X},
}
\bib{Nakq2}{article}{
   author={Nakajima, H.},
   title={Quiver varieties and Kac-Moody algebras},
   journal={Duke Math. J.},
   volume={91},
   date={1998},
   number={3},
   pages={515--560},
   issn={0012-7094},
}

\bib{NakL}{book}{
   author={Nakajima, H.},
   title={Lectures on Hilbert schemes of points on surfaces},
   series={University Lecture Series},
   volume={18},
   publisher={American Mathematical Society, Providence, RI},
   date={1999},
   pages={xii+132},
}

\bib{Nak3}{article}{
   author={Nakajima, H.},
   title={Quiver varieties and finite-dimensional representations of quantum
   affine algebras},
   journal={J. Amer. Math. Soc.},
   volume={14},
   date={2001},
   number={1},
   pages={145--238},
   issn={0894-0347},
}

\bibitem{NakC1}
\bysame,
\emph{Towards a mathematical definition of Coulomb branches of
  $3$-dimensional $\mathcal N=4$ gauge theories, I}, 
\texttt{arXiv:1503.03676}. 

\bib{NY1}{article}{
   author={Nakajima, H.},
   author={Yoshioka, K.},
   title={Instanton counting on blowup. I. 4-dimensional pure gauge theory},
   journal={Invent. Math.},
   volume={162},
   date={2005},
   number={2},
   pages={313--355},
   issn={0020-9910},
}

\bib{NY2}{article}{
   author={Nakajima, H.},
   author={Yoshioka, K.},
   title={Instanton counting on blowup. II. $K$-theoretic partition
   function},
   journal={Transform. Groups},
   volume={10},
   date={2005},
   number={3-4},
   pages={489--519},
   issn={1083-4362},
}

\bib{NY3}{article}{
   author={Nakajima, H.},
   author={Yoshioka, K.},
   title={Lectures on instanton counting},
   conference={
      title={Algebraic structures and moduli spaces},
   },
   book={
      series={CRM Proc. Lecture Notes},
      volume={38},
      publisher={Amer. Math. Soc., Providence, RI},
   },
   date={2004},
   pages={31--101},
}

\bib{NekInst}{article}{
   author={Nekrasov, N.},
   title={Seiberg-Witten prepotential from instanton counting},
   journal={Adv. Theor. Math. Phys.},
   volume={7},
   date={2003},
   number={5},
   pages={831--864},
   issn={1095-0761},
}

\bib{NekM}{article}{
   author={Nekrasov, N.},
   title={$\bold Z$-theory: chasing ${\germ m}/f$ theory},
   language={English, with English and French summaries},
   note={Strings 04. Part II},
   journal={C. R. Phys.},
   volume={6},
   date={2005},
   number={2},
   pages={261--269},
   issn={1631-0705},
}

\bibitem{Nikqq1}
\bysame,
\emph{BPS/CFT correspondence: non-perturbative Dyson-Schwinger
  equations and qq-characters}
\texttt{arXiv:1512.05388}. 

\bibitem{Nikqq2}
\bysame,
\emph{BPS/CFT correspondence II: Instantons at crossroads, Moduli and
  Compactness Theorem},
\texttt{arXiv:1608.07272}. 

\bibitem{Nikqq3}
\bysame,
\emph{BPS/CFT Correspondence III: Gauge Origami partition function and
  qq-characters}, 
\texttt{arXiv:1701.00189}.

\bib{NOSW}{article}{
   author={Nekrasov, N.},
   author={Okounkov, A.},
   title={Seiberg-Witten theory and random partitions},
   conference={
      title={The unity of mathematics},
   },
   book={
      series={Progr. Math.},
      volume={244},
      publisher={Birkh\"auser Boston, Boston, MA},
   },
   date={2006},
   pages={525--596},
}

\bib{MDT}{article}{
   author={Nekrasov, N.},
   author={Okounkov, A.},
   title={Membranes and sheaves},
   journal={Algebr. Geom.},
   volume={3},
   date={2016},
   number={3},
   pages={320--369},
   issn={2214-2584},
}

\bibitem{NikPes}
N.~Nekrasov and V.~Pestun,
\emph{Seiberg-Witten geometry of four dimensional N=2 quiver gauge
  theories}, 
\texttt{arXiv:1211.2240}. 

\bibitem{NikPesSam}
N.~Nekrasov, V.~Pestun, and S.~Shatashvili
\emph{Quantum geometry and quiver gauge theories},
\texttt{arXiv:1312.6689}. 

\bib{NS1}{article}{
   author={Nekrasov, Nikita A.},
   author={Shatashvili, Samson L.},
   title={Supersymmetric vacua and Bethe ansatz},
   journal={Nuclear Phys. B Proc. Suppl.},
   volume={192/193},
   date={2009},
   pages={91--112},
   issn={0920-5632},
}

\bib{NS2}{article}{
   author={Nekrasov, Nikita A.},
   author={Shatashvili, Samson L.},
   title={Quantization of integrable systems and four dimensional gauge
   theories},
   conference={
      title={XVIth International Congress on Mathematical Physics},
   },
   book={
      publisher={World Sci. Publ., Hackensack, NJ},
   },
   date={2010},
   pages={265--289},
}

\bibitem{OOP}
A.~Oblomkov, A.~Okounkov, and A.~Okounkov, 
in preparation.

\bib{OkUses}{article}{
   author={Okounkov, A.},
   title={The uses of random partitions},
   conference={
      title={XIVth International Congress on Mathematical Physics},
   },
   book={
      publisher={World Sci. Publ., Hackensack, NJ},
   },
   date={2005},
   pages={379--403},
}

\bib{OkECM}{article}{
   author={Okounkov, A.},
   title={Random surfaces enumerating algebraic curves},
   conference={
      title={European Congress of Mathematics},
   },
   book={
      publisher={Eur. Math. Soc., Z\"urich},
   },
   date={2005},
   pages={751--768},
}

\bib{OkICM}{article}{
   author={Okounkov, A.},
   title={Random partitions and instanton counting},
   conference={
      title={International Congress of Mathematicians. Vol. III},
   },
   book={
      publisher={Eur. Math. Soc., Z\"urich},
   },
   date={2006},
   pages={687--711},
}

\bib{OkAMS}{article}{
   author={Okounkov, A.},
   title={Limit shapes, real and imagined},
   journal={Bull. Amer. Math. Soc. (N.S.)},
   volume={53},
   date={2016},
   number={2},
   pages={187--216},
   issn={0273-0979},
}

\bibitem{PCMI} 
\bysame,
\emph{Lectures on K-theoretic computations in enumerative geometry},
\texttt{arXiv:1512.07363}. 

\bibitem{slc} 
\bysame, 
\emph{Enumerative geometry and geometric representation theory},
Proceedings of the 2015 AMS Algebraic Geomtery 
Summer Institute.

\bib{OR}{article}{
   author={Okounkov, A.},
   author={Reshetikhin, N.},
   title={Correlation function of Schur process with application to local
   geometry of a random 3-dimensional Young diagram},
   journal={J. Amer. Math. Soc.},
   volume={16},
   date={2003},
   number={3},
   pages={581--603},
   issn={0894-0347},
}

\bib{ORV}{article}{
   author={Okounkov, A.},
   author={Reshetikhin, N.},
   author={Vafa, C.},
   title={Quantum Calabi-Yau and classical crystals},
   conference={
      title={The unity of mathematics},
   },
   book={
      series={Progr. Math.},
      volume={244},
      publisher={Birkh\"auser Boston, Boston, MA},
   },
   date={2006},
   pages={597--618},
}

\bib{OP1}{article}{
   author={Okounkov, A.},
   author={Pandharipande, R.},
   title={Gromov-Witten theory, Hurwitz theory, and completed cycles},
   journal={Ann. of Math. (2)},
   volume={163},
   date={2006},
   number={2},
   pages={517--560},
   issn={0003-486X},
}

\bib{OP2}{article}{
   author={Okounkov, A.},
   author={Pandharipande, R.},
   title={The equivariant Gromov-Witten theory of ${\bf P}^1$},
   journal={Ann. of Math. (2)},
   volume={163},
   date={2006},
   number={2},
   pages={561--605},
   issn={0003-486X},
}

\bib{OP3}{article}{
   author={Okounkov, A.},
   author={Pandharipande, R.},
   title={Virasoro constraints for target curves},
   journal={Invent. Math.},
   volume={163},
   date={2006},
   number={1},
   pages={47--108},
   issn={0020-9910},
}

\bib{OP4}{article}{
   author={Okounkov, A.},
   author={Pandharipande, R.},
   title={Quantum cohomology of the Hilbert scheme of points in the plane},
   journal={Invent. Math.},
   volume={179},
   date={2010},
   number={3},
   pages={523--557},
   issn={0020-9910},
}

\bib{OP5}{article}{
   author={Okounkov, A.},
   author={Pandharipande, R.},
   title={The local Donaldson-Thomas theory of curves},
   journal={Geom. Topol.},
   volume={14},
   date={2010},
   number={3},
   pages={1503--1567},
   issn={1465-3060},
}

\bibitem{OS}
A.~Okounkov and A.~Smirnov, 
\emph{Quantum difference equations for Nakajima varieties},
\texttt{arXiv:1602.09007}.

\bib{OogVafa}{article}{
   author={Ooguri, H.},
   author={Vafa, C.},
   title={Knot invariants and topological strings},
   journal={Nuclear Phys. B},
   volume={577},
   date={2000},
   number={3},
   pages={419--438},
}

\bib{PP1}{article}{
   author={Pandharipande, R.},
   author={Pixton, A.},
   title={Descendents on local curves: stationary theory},
   conference={
      title={Geometry and arithmetic},
   },
   book={
      series={EMS Ser. Congr. Rep.},
      publisher={Eur. Math. Soc., Z\"urich},
   },
   date={2012},
   pages={283--307},
}
\bib{PP2}{article}{
   author={Pandharipande, R.},
   author={Pixton, A.},
   title={Descendents on local curves: rationality},
   journal={Compos. Math.},
   volume={149},
   date={2013},
   number={1},
   pages={81--124},
   issn={0010-437X},
}

\bib{PP3}{article}{
   author={Pandharipande, R.},
   author={Pixton, A.},
   title={Descendent theory for stable pairs on toric 3-folds},
   journal={J. Math. Soc. Japan},
   volume={65},
   date={2013},
   number={4},
   pages={1337--1372},
   issn={0025-5645},
}

\bib{PP4}{article}{
   author={Pandharipande, R.},
   author={Pixton, A.},
   title={Gromov-Witten/pairs descendent correspondence for toric 3-folds},
   journal={Geom. Topol.},
   volume={18},
   date={2014},
   number={5},
   pages={2747--2821},
   issn={1465-3060},
}

\bib{PP5}{article}{
   author={Pandharipande, R.},
   author={Pixton, A.},
   title={Gromov-Witten/Pairs correspondence for the quintic 3-fold},
   journal={J. Amer. Math. Soc.},
   volume={30},
   date={2017},
   number={2},
   pages={389--449},
   issn={0894-0347},
}

\bib{PT1}{article}{
   author={Pandharipande, R.},
   author={Thomas, R. P.},
   title={Curve counting via stable pairs in the derived category},
   journal={Invent. Math.},
   volume={178},
   date={2009},
   number={2},
   pages={407--447},
   issn={0020-9910},
}

\bib{PT2}{article}{
   author={Pandharipande, R.},
   author={Thomas, R. P.},
   title={The 3-fold vertex via stable pairs},
   journal={Geom. Topol.},
   volume={13},
   date={2009},
   number={4},
   pages={1835--1876},
   issn={1465-3060},
}

\bib{Remm}{book}{
   author={Remmert, R.},
   title={Classical topics in complex function theory},
   series={Graduate Texts in Mathematics},
   volume={172},
   note={Translated from the German by Leslie Kay},
   publisher={Springer-Verlag, New York},
   date={1998},
   pages={xx+349},
}

\bibitem{Sm1} A.~Smirnov, 
\emph{Rationality of capped descendent vertex in K-theory},
\texttt{arXiv:1612.01048}, and in preparation.

\bib{Taubes}{article}{
   author={Taubes, C. H.},
   title={Casson's invariant and gauge theory},
   journal={J. Differential Geom.},
   volume={31},
   date={1990},
   number={2},
   pages={547--599},
   issn={0022-040X},
}

\bib{ThCass}{article}{
   author={Thomas, R. P.},
   title={A holomorphic Casson invariant for Calabi-Yau 3-folds, and bundles
   on $K3$ fibrations},
   journal={J. Differential Geom.},
   volume={54},
   date={2000},
   number={2},
   pages={367--438},
   issn={0022-040X},
}

\bib{Toda1}{article}{
   author={Toda, Y.},
   title={Curve counting theories via stable objects I. DT/PT
   correspondence},
   journal={J. Amer. Math. Soc.},
   volume={23},
   date={2010},
   number={4},
   pages={1119--1157},
   issn={0894-0347},
}
\bib{Toda2}{article}{
   author={Toda, Y.},
   title={Stability conditions and curve counting invariants on Calabi-Yau
   3-folds},
   journal={Kyoto J. Math.},
   volume={52},
   date={2012},
   number={1},
   pages={1--50},
   issn={2156-2261},
}

\bib{Uhl}{article}{
   author={Uhlenbeck, K.},
   title={The Chern classes of Sobolev connections},
   journal={Comm. Math. Phys.},
   volume={101},
   date={1985},
   number={4},
   pages={449--457},
   issn={0010-3616},
}

\bib{WittCS}{article}{
   author={Witten, E.},
   title={Chern-Simons gauge theory as a string theory},
   conference={
      title={The Floer memorial volume},
   },
   book={
      series={Progr. Math.},
      volume={133},
      publisher={Birkh\"auser, Basel},
   },
   date={1995},
   pages={637--678},
}

\bib{WitTQFT}{article}{
   author={Witten, E.},
   title={Topological quantum field theory},
   journal={Comm. Math. Phys.},
   volume={117},
   date={1988},
   number={3},
   pages={353--386},
   issn={0010-3616},
}

\bib{WittJones}{article}{
   author={Witten, E.},
   title={Quantum field theory and the Jones polynomial},
   journal={Comm. Math. Phys.},
   volume={121},
   date={1989},
   number={3},
   pages={351--399},
   issn={0010-3616},
}

\bib{WittenDM}{article}{
   author={Witten, E.},
   title={Two-dimensional gravity and intersection theory on moduli space},
   conference={
      title={Surveys in differential geometry},
      address={Cambridge, MA},
      date={1990},
   },
   book={
      publisher={Lehigh Univ., Bethlehem, PA},
   },
   date={1991},
   pages={243--310},
}

\bib{WittenDon}{article}{
   author={Witten, E.},
   title={Physical methods applied to Donaldson theory},
   conference={
      title={Functional analysis on the eve of the 21st century, Vol.\ 1},
      address={New Brunswick, NJ},
      date={1993},
   },
   book={
      series={Progr. Math.},
      volume={131},
      publisher={Birkh\"auser Boston, Boston, MA},
   },
   date={1995},
   pages={283--292},
}

	\end{biblist}

\end{bibdiv}

\end{document}